\documentclass[10pt,a4paper,twoside]{article}
\textheight 23.5 cm
\textwidth 16. cm
\topmargin -0.5 cm
\oddsidemargin 0. cm
\evensidemargin 0. cm

\usepackage{amsmath,amssymb,enumerate,theorem}
\usepackage{dsfont}
\usepackage{epsfig,color,graphics,graphicx}
\usepackage{amssymb,amsbsy,amsmath,amsfonts,amssymb,amscd}
\usepackage[english]{babel}
\setcounter{MaxMatrixCols}{10}
\sloppy
\newcommand{\R}{{\mathbb{R}}}

\newenvironment{proof}{\noindent\textbf{Proof.\ }}{\hspace*{\fill}$\Box$\medskip}
\newtheorem{lemma}{Lemma}[section]
\newtheorem{theorem}{Theorem}
\newtheorem{proposition}[lemma]{Proposition}

\newtheorem{remark}{Remark}

\pagestyle{myheadings}

\begin{document}

\title{On the local exact controllability of micropolar fluids with few controls}
\author{Sergio Guerrero\thanks{Laboratoire Jacques-Louis Lions, Universit\'e Pierre et Marie Curie, 75252 Paris C\'edex 05, France. \newline e-mail : guerrero@ann.jussieu.fr.}  \& Pierre Cornilleau\thanks{Teacher at Lyc\'ee Louis-le-Grand, 123, rue Saint-Jacques, 75005 Paris, France.\newline e-mail: pierre.cornilleau@ens-lyon.org.}}
\maketitle

\begin{abstract}
In this paper, we study the local exact controllability to special trajectories of the micropolar fluid systems in dimension $d=2$ and $d=3$.
We show that controllability is possible acting only on one velocity.
\end{abstract}

\tableofcontents
\section*{Introduction}
Let $d \in \{2, 3\}$ and $\Omega \subset \R^d$ be a bounded connected open set whose boundary is regular.
In this paper, we focus on the controllability properties of the so-called micropolar fluids (see the monograph \cite{L}).
In this framework, the fluid velocity field $y\ (=y(t,x) \in \R^d)$ and the angular velocity $\omega\ (=\omega(t,x)\in \R$ if $d=2$ or $\R^3$ if $d=3$)
are driven by the following nonlinear system:
\begin{equation}\label{system} \left\{ \begin{array}{ccc}
     y_t - \Delta y+(y \cdot \nabla)y+\nabla p &=& P_1 \omega+ \mathds{1}_{\cal O} u \\
     \omega_t-\Delta \omega-(d-2)\nabla(\nabla \cdot \omega)+(y \cdot \nabla)\omega&=&\nabla\times y+ \mathds{1}_{\cal O} v\\
    \nabla \cdot y &=& 0\\
     y &=& 0\\
     \omega&=&0\\
     y(0,\cdot)&=&y_0 \\
     \omega(0,\cdot)&=&\omega_0
   \end{array} \right. \begin{array}{c}
     \text{in } Q,\\
     \text{in } Q,\\
     \text{in } Q,\\
     \text{on } \Sigma,\\
     \text{on } \Sigma,\\
     \text{in } \Omega,\\
     \text{in } \Omega,
   \end{array} \end{equation}
where $Q:= (0,T)\times\Omega$, $\Sigma:=(0,T)\times \partial\Omega$, $y_0$ and $\omega_0$ are the velocity and angular velocity at time $t=0$ and $\nabla\times :\mathbb{R}^d\rightarrow\mathbb{R}^{2d-3}$ is the usual curl operator. In this system, we have denoted
$$
P_1\omega:=\left\{\begin{array}{ll}
(\partial_2\omega,-\partial_1\omega)&\hbox{ if }d=2,
\\ \noalign{\medskip}
\nabla\times\omega&\hbox{ if }d=3.
\end{array}
\right.
$$
Moreover, ${\cal O}$ is a nonempty open subset of $\Omega$ called the {\it control domain} and $u$ and $v$ stand for control functions which act over the system during the time $T>0$.
As usual in the context of incompressible fluids, the following vector spaces will be used along the paper :
\begin{equation}\label{H}
H=\left\{ w\in L^2(\Omega): \ \nabla \cdot w=0 \text{ in } \Omega, \ w\cdot\nu=0 \text{ on } \partial\Omega\right\}
\end{equation}
and
$$
V=\left\{w \in H^1_0(\Omega): \ \nabla \cdot w=0 \text{ in } \Omega\right\}.
$$
Here, we have denoted $\nu$ the outward unit normal vector to $\Omega$.

The main question we address in this paper is whether system \eqref{system} is {\it locally exactly controllable to the trajectories with the sole control $v$} (with $u=0$) or  {\it with the sole control $u$} (with $v=0$).


We will call a {\it trajectory} associated to system (\ref{system}) any triplet $(\overline{y}, \overline{p}, \overline{\omega})$ satisfying the system without controls, that is to say :
\begin{equation}\label{freesystem} \left\{ \begin{array}{ccc}
     \overline{y}_t - \Delta \overline{y}+(\overline{y}.\nabla)\overline{y}+\nabla \overline{p} &=& P_1 \overline{\omega}\\
     \overline{\omega}_t-\Delta \overline{\omega}-(d-2)\nabla(\nabla \cdot \overline{\omega})+(\overline{y}.\nabla)\overline{\omega}&=&\nabla \times \overline{y}\\
    \nabla \cdot \overline{y} &=& 0\\
     \overline{y} &=& 0\\
     \overline{\omega}&=&0\\
     \overline{y}(0,\cdot)&=&\overline y_0\\
     \overline{\omega}(0,\cdot)&=&\overline\omega_0
   \end{array} \right. \begin{array}{c}
     \text{in } Q,\\
     \text{in } Q,\\
     \text{in } Q,\\
     \text{on } \Sigma,\\
     \text{on } \Sigma,\\
     \text{in } \Omega,\\
     \text{in } \Omega,
   \end{array} \end{equation}
for some initial data $(\overline y_0,\overline\omega_0)$. In this paper, we are interested in the case where $\overline y\equiv  0$ and we assume that there exists a trajectory  $(0,\overline p,\overline\omega)$ solution of (\ref{freesystem}) such that
\begin{equation}\label{overlineomega}
\overline\omega\in L^2(0,T;H^3(\Omega))\cap H^1(0,T;H^1_0(\Omega)) \text{ and } \overline p\in L^2(0,T;H^3(\Omega))\cap H^1(0,T;H^1(\Omega)).
\end{equation}

\begin{remark}
 Observe that for some $\overline\omega_0$ there exists a nontrivial solution $(0,0,\overline\omega)$ to (\ref{freesystem}) with $\overline y_0=0$. This comes from the fact that there exists nonzero solutions of the spectral problem
  \begin{equation}\label{spectral2}
  \left\{\begin{array}{ccc}\medskip\displaystyle
      -2\Delta z &=& \mu z\\ \displaystyle
      \frac{\partial z}{\partial\nu}&=&0\\
    \end{array} \right. \begin{array}{c}\medskip
      \text{in } \Omega,\\
      \text{    on } \partial\Omega,
    \end{array}
  \end{equation}
  when $\Omega$ is a ball: indeed, one can choose a radially symmetric function $z$ satisfying (\ref{spectral2}) (which in particular satisfies that its tangential gradient vanishes on $\partial\Omega$). Then $(0,\overline p,\overline\omega)=(0,0,e^{-\mu t}\nabla z)$ fulfills (\ref{freesystem}) with $(\overline y_0,\overline\omega_0)=(0,\nabla z)$.
\end{remark}


It will be said that system \eqref{system} is locally exactly controllable to the trajectory $(0, \overline{p} , \overline{\omega})$ at time $T$ if there exists $\delta>0$ such that, for any initial data $(y_0,\omega_0)\in V\times H^1_0(\Omega)$ satisfying
$$
\left\|(y_0, \omega_0)-(0, \overline{\omega}_0)\right\|_{V\times H^1(\Omega)} \leq \delta,
$$
there exists a control $(u,v)$ and an associated solution $(y,\omega,p)$ such that
$$
y(T, \cdot)=0 \text{ and } \omega(T, \cdot)=\overline{\omega}(T) \text{ in } \Omega.
$$

We can now state the main results of this paper.
\begin{theorem}\label{Control3}
Assume that $d=3$ and $\overline\omega$ satisfies (\ref{overlineomega}). Then, (\ref{system}) is locally exactly controllable with control $(0,v)$ where $v\in L^2(Q)$.
\end{theorem}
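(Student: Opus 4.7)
The plan is the classical scheme for local exact controllability to trajectories of coupled parabolic systems: reduce to null controllability of a linearization, establish an observability inequality for the adjoint via a global Carleman estimate, and close the nonlinear problem by an inverse mapping theorem. Setting $z:=y$, $\hat\omega:=\omega-\overline\omega$ and $\hat p:=p-\overline p$ and using that $(0,\overline p,\overline\omega)$ solves (\ref{freesystem}), the problem reduces to local null controllability at time $T$ for $(z,\hat\omega)$, small at $t=0$. Adding auxiliary sources $(f,g)$ in weighted spaces and linearizing around $(0,0)$, I first address the linear system
\begin{equation*}
\left\{\begin{array}{l}
z_t - \Delta z + \nabla \hat p = \nabla\times\hat\omega + f,\\
\hat\omega_t - \Delta\hat\omega - \nabla(\nabla\cdot\hat\omega) + (z\cdot\nabla)\overline\omega = \nabla\times z + \mathds{1}_{\cal O}v + g,\\
\nabla\cdot z = 0,
\end{array}\right.
\end{equation*}
with homogeneous Dirichlet boundary conditions.

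By duality, null controllability of this linear system is equivalent to an observability inequality for the adjoint
\begin{equation*}
\left\{\begin{array}{l}
-\varphi_t - \Delta\varphi + \nabla\pi = \nabla\times\psi - (\nabla\overline\omega)^T\psi,\\
-\psi_t - \Delta\psi - \nabla(\nabla\cdot\psi) = \nabla\times\varphi,\\
\nabla\cdot\varphi = 0,
\end{array}\right.
\end{equation*}
the observation being only $\iint_{(0,T)\times\mathcal{O}} e^{-2s\alpha}\rho^N|\psi|^2$, since $v$ is the only available control. I plan to obtain this observability from a global Carleman estimate proved as follows: (a) apply the Stokes Carleman estimate of Fern\'andez-Cara--Guerrero--Imanuvilov--Puel to $(\varphi,\pi)$, producing a local $\varphi$-term on some $\omega_0\Subset\mathcal{O}$ and source $\nabla\times\psi - (\nabla\overline\omega)^T\psi$; (b) apply a componentwise parabolic Carleman estimate to $\psi$, producing a local $\psi$-term on $\omega_0$ and source $\nabla\times\varphi$; (c) add them with balanced powers of the Carleman parameter $s$, absorbing the coupling sources by dominant weighted terms on the left (the regularity (\ref{overlineomega}) of $\overline\omega$ makes $(\nabla\overline\omega)^T\psi$ a harmless lower-order term); (d) eliminate the remaining local $\varphi$-term.

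Step (d) is the main obstacle. The key observation is that the second adjoint equation supplies the pointwise identity
\begin{equation*}
\nabla\times\varphi = -\psi_t - \Delta\psi - \nabla(\nabla\cdot\psi) \quad\text{on } \mathcal{O},
\end{equation*}
so $\nabla\times\varphi$ is already a second-order expression in $\psi$. Combined with $\nabla\cdot\varphi = 0$ and a cutoff $\chi$ supported in $\mathcal{O}$, a div--curl reconstruction argument---recovering $\chi\varphi$ from its divergence $\nabla\chi\cdot\varphi$ and its curl $\nabla\chi\times\varphi+\chi\,\nabla\times\varphi$ via standard elliptic estimates, or equivalently integrating by parts against suitable test vector fields---bounds the local weighted integral of $|\varphi|^2$ by weighted local integrals of $|\psi|$ and its derivatives up to second order, at the price of finitely many powers of $s$, which are reabsorbed by the dominant Carleman terms already on the left. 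The assumption $d=3$ is essential here: it forces $P_1=\nabla\times$, making the coupling curl--curl in both directions, which gives the above identity its clean form.

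Once the observability is established, a Fursikov--Imanuvilov-type duality yields, for each source $(f,g)$ decaying fast enough at $t=T$, a control $v$ and a weak solution satisfying $z(T)=0$, $\hat\omega(T)=0$ together with weighted $L^2$-bounds. Parabolic regularity upgrades these to weighted bounds in $L^2(H^2)\cap H^1(L^2)$, enough to control the nonlinear terms $(z\cdot\nabla)z$ and $(z\cdot\nabla)\hat\omega$ from the state space into the source space. A Liusternik--Graves inverse mapping theorem then passes from the linear to the nonlinear problem. The decisive technical point throughout is the Carleman estimate, specifically step (d), where the lack of any control on the velocity equation forces the observability of $\varphi$ to be recovered entirely from $\psi$ through the curl coupling.
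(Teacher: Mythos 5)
Your overall architecture (Carleman estimate for the adjoint with observation only in $\psi$, elimination of the local velocity term through the $\psi$-equation, duality, weighted regularity, inverse mapping theorem) matches the paper's, and you correctly identify step (d) as the crux. But step (d) as you describe it contains a genuine gap. If you start from a Stokes-type Carleman estimate applied directly to $(\varphi,\pi)$, the local term you must eliminate is $\iint_{(0,T)\times\omega_0}e^{-2s\alpha}\xi^N|\varphi|^2$, i.e.\ the \emph{undifferentiated} velocity. The adjoint $\psi$-equation only expresses $\nabla\times\varphi$ in terms of $\psi$, and your proposed div--curl reconstruction of $\chi\varphi$ is circular: $\nabla\cdot(\chi\varphi)=\nabla\chi\cdot\varphi$ and $\nabla\times(\chi\varphi)=\nabla\chi\times\varphi+\chi\,\nabla\times\varphi$ reintroduce the local $L^2$ norm of $\varphi$ itself on the support of $\nabla\chi$, which is of exactly the same nature as the quantity you are trying to bound and cannot be absorbed. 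There is a structural obstruction behind this: a field that is divergence-free and curl-free in an open set is locally the gradient of a harmonic function and need not vanish, so no local estimate of $|\varphi|^2$ by local data on $\nabla\times\varphi$ (plus $\nabla\cdot\varphi=0$) can hold; only the \emph{global} div--curl estimate, using $\varphi=0$ on $\partial\Omega$, is valid.

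The paper circumvents this by never producing a local term in $|\varphi|^2$. It applies $\nabla\times$ and then $\nabla$ to the momentum equation to remove the pressure, applies the Carleman estimate of Imanuvilov--Puel--Yamamoto for heat equations with nonhomogeneous boundary data to $\nabla(\nabla\times\varphi)$ (absorbing the resulting boundary terms by maximal regularity for the Stokes and heat systems), so that the local term to be eliminated is $\int_{Q_2}e^{-2s\alpha}|\nabla\times\varphi|^2$. That quantity \emph{is} locally expressible through $\nabla\times\varphi=-\psi_t-\Delta\psi-\nabla(\nabla\cdot\psi)-g_1$, and after integrations by parts it is bounded by $\varepsilon(I(\psi)+J(\varphi))$ plus local terms in $|\psi|^2$ and source terms. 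The global $\int_Qe^{-2s\alpha}|\varphi|^2$ in the final estimate is then recovered from the global weighted norm of $\nabla\times\varphi$ using $\nabla\cdot\varphi=0$ in $\Omega$ and the homogeneous Dirichlet condition. To repair your argument you would need to restructure step (a) so that the whole $\varphi$-estimate is carried by $\nabla\times\varphi$ and its derivatives, rather than by $\varphi$ itself.
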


\begin{theorem}\label{Control2}
Assume that $d=2$ and $\overline\omega\equiv 0$. Then, (\ref{system}) is locally exactly controllable with  control $(u,0)$ where $u\in L^2(Q)$.
\end{theorem}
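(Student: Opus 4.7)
The plan is to follow the now-classical three-step programme for local exact controllability to trajectories of nonlinear fluid systems: linearise around the target trajectory, prove null controllability of the linearised system through an observability inequality for its adjoint, and conclude via a local inversion argument. Since $\overline y\equiv 0$ and $\overline\omega\equiv 0$, the linearisation of (\ref{system}) around $(0,\overline p,0)$ is particularly clean, all convective terms vanishing, and reads
\begin{equation*}
\left\{\begin{array}{l}
y_t-\Delta y+\nabla p=P_1\omega+\mathds{1}_{{\cal O}}u+f,\quad \nabla\cdot y=0, \\[2pt]
\omega_t-\Delta\omega=\nabla\times y+g,
\end{array}\right.
\end{equation*}
in $Q$, with homogeneous Dirichlet boundary conditions and right-hand sides $(f,g)$ which must eventually decay fast enough as $t\to T^-$ so that the nonlinear terms $(y\cdot\nabla)y$ and $(y\cdot\nabla)\omega$ can be absorbed in the fixed-point step. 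The core of the proof is therefore to show null controllability of this linear system using a single $L^2$ control $u$ with sufficient weighted regularity.

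By duality this is equivalent to a global observability inequality for the backward adjoint problem
\begin{equation*}
\left\{\begin{array}{l}
-\varphi_t-\Delta\varphi+\nabla\pi=P_1\psi,\quad \nabla\cdot\varphi=0,\\[2pt]
-\psi_t-\Delta\psi=\nabla\times\varphi,
\end{array}\right.
\end{equation*}
with homogeneous Dirichlet conditions (using that the $L^2$-adjoint of $\nabla\times$ is $P_1$ when $d=2$). What must be shown is, schematically,
$$\|\varphi(0)\|_{L^2}^{2}+\|\psi(0)\|_{L^2}^{2}\le C\iint_{(0,T)\times{\cal O}}|\varphi|^2\,dx\,dt,$$
together with weighted refinements strong enough to feed the inverse-mapping step.

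To obtain such an estimate I would derive a global Carleman inequality with Imanuvilov-type weights $e^{-s\alpha}\xi^k$ blowing up at $t=0$ and $t=T$. The key simplification specific to $d=2$ is that applying $\nabla\times$ to the Stokes equation eliminates the pressure and, together with the identity $\nabla\times(P_1\psi)=-\Delta\psi$, yields the closed coupled scalar heat system
$$-\eta_t-\Delta\eta=-\Delta\psi,\qquad -\psi_t-\Delta\psi=\eta,\qquad \eta:=\nabla\times\varphi.$$
Applying the standard Fursikov--Imanuvilov heat-equation Carleman estimate to $\psi$ (with source $\eta$) and to $\eta$ (with source $-\Delta\psi$), the ensuing cross terms can be absorbed by taking the parameter $s$ large and tuning the powers of $\xi$ carefully, producing a global weighted bound on $(\eta,\psi)$ controlled by local observations $\iint_{(0,T)\times\omega_0}(\eta^2+\psi^2)$ on some $\omega_0\Subset{\cal O}$.

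The final and most delicate step, which I expect to be the main obstacle, is to trade these local observations of $(\eta,\psi)$ for one of $\varphi$ alone. The piece involving $\eta=\nabla\times\varphi$ is absorbed by a routine Caccioppoli-type energy estimate using cut-offs supported in $\cal O$. Bounding $\psi$ locally by $\varphi$ is the genuine difficulty: the relation $-\Delta\psi=\eta_t+\Delta\eta$ produced by the curl of the Stokes equation recovers $\Delta\psi$ in $\cal O$ from derivatives of $\varphi$, but reconstructing $\psi$ itself requires inverting the Laplacian with only local information, which in turn forces a careful use of cut-offs, of the incompressibility of $\varphi$, and of the handling of spurious pressure-like contributions arising from commutators with the cut-offs. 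Once this is in place, standard dissipation arguments turn the Carleman estimate into the observability inequality with enough weighted control regularity, and a Liusternik--Graves-type local inversion in appropriate weighted Sobolev spaces then yields the local exact controllability of the nonlinear system.
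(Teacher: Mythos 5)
Your overall architecture (linearise, prove an observability/Carleman inequality for the adjoint with observation on $\varphi$ only, conclude by local inversion) matches the paper, and your reduction of the adjoint system via $\nabla\times$ and the identity $\nabla\times(P_1\psi)=-\Delta\psi$ is the right starting point. But there are two genuine gaps at the heart of the Carleman step. First, you propose to apply ``the standard Fursikov--Imanuvilov heat-equation Carleman estimate'' to $\eta=\nabla\times\varphi$. This is not legitimate: $\varphi=0$ on $\Sigma$ gives no boundary condition on $\nabla\times\varphi$ (it involves normal derivatives), so the Dirichlet Carleman estimate does not apply to $\eta$. One must instead use a Carleman inequality valid without boundary conditions (the paper uses \cite{FGGP}, applied to $\Delta\varphi$ and $\Delta\psi$), which produces boundary terms of the type $s^{-3}\int_{\Sigma}e^{-2s\alpha}\xi^{-3}|\partial_\nu\Delta\varphi|^2$; absorbing these is a substantial part of the proof (weighted parabolic regularity for $\varphi^*,\psi^*$ plus the interpolation trace inequality of Lemma~\ref{Ferrero}), and your sketch contains no mechanism for it.

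Second, and more fundamentally, the step you yourself flag as ``the main obstacle'' is not resolvable in the form you set it up. If you apply the Dirichlet Carleman estimate to $\psi$ itself, the resulting local term is $s^3\int_{Q_1}e^{-2s\alpha}\xi^3|\psi|^2$, and the only information the $\varphi$-equation gives you in ${\cal O}$ is $\Delta\psi=-(\nabla\times\varphi)_t-\Delta(\nabla\times\varphi)-\nabla\times g_0$; you cannot recover $\psi$ from $\Delta\psi$ on a subdomain (harmonic functions obstruct this), so a local term in $\psi$ cannot be traded for a local term in $\varphi$. The missing idea is to apply the Laplacian to the \emph{second} adjoint equation as well and run the Carleman estimate on $\Delta\psi$ (again without boundary conditions), so that the left-hand side and the local term both involve $|\Delta\psi|^2$ rather than $|\psi|^2$; the local term $s^{-2}\int_{Q_1}e^{-2s\alpha}\xi^{-2}|\Delta\psi|^2$ is then converted, via the identity above and integrations by parts, into local terms in $\varphi$ (at the cost of the large power $s^{15}\xi^{15}$), while the \emph{global} control of $\psi$ is not lost because $\psi=0$ on $\partial\Omega$ globally, so $\|\Delta\psi\|_{L^2(\Omega)}$ dominates $\|\psi\|_{H^2(\Omega)}$. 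This weaker (second-order) observability also changes the duality/Lax--Milgram construction of the control: the bilinear form must involve $M^*$ and $L_H^*$ applied to the weighted adjoint residuals, and the nonhomogeneous terms must be measured in correspondingly stronger norms, as in Proposition~\ref{prop22}. Without these two ingredients the proposed proof does not close.
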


The local exact controllability to any (sufficiently regular) trajectory $(\overline y,\overline p,\overline\omega)$ of (\ref{system}) has been obtained in \cite{FCG} whenever both controls $u$ and $v$ are active.

%
%

\vskip0.2cm Our main strategy relies on the null controllability of a linearized system around $(0, \overline{p}, \overline{\omega})$. It is classical that this null controllability result is equivalent to the  observability of the adjoint system. We will consequently consider the following problem:
\begin{equation}\label{adjoint}
\left\{
\begin{array}{llll}
     -\varphi_t-\Delta \varphi +\nabla \pi&=&P_1\psi+(d-2)(\nabla \psi)^T  \overline{\omega}+g_0 \qquad  &\text{ in } Q,  \\
    -\psi_t-\Delta \psi - (d-2)\nabla(\nabla \cdot \psi)&=& \nabla \times \varphi +g_1 \qquad &\text{ in } Q,\\
 \nabla \cdot \varphi &=& 0\qquad &\text{ in } Q,\\
     \varphi &=& 0 \qquad  &\text{ on } \Sigma, \\
     \psi&=&0  \qquad &\text{ on } \Sigma,\\
     \varphi(T,\cdot)&=&\varphi_T &\text{ in } \Omega, \\
     \psi(T,\cdot)&=&\psi_T &\text{ in } \Omega,\\
\end{array}
\right.
\end{equation}
where $\varphi_T \in H$ and $\psi_T \in L^2(\Omega)$.

\begin{remark} Our result in dimension $d=3$ deals with the control of (\ref{system}) through the fluid velocity but one could also be interested in controlling with the sole control $u$.

However, in the particular case of $(\overline y,\overline p,\overline\omega)=(0,0,0)$ one can prove that the associated linearized problem is not null-controllable when $\Omega$ is a ball.
 In fact, this linearized system is not even approximately controllable since the unique continuation property for the solutions of (\ref{adjoint}) (with $g_0\equiv g_1\equiv 0$)
\begin{equation}\label{UC}
\varphi=0\hbox{ in }(0,T)\times {\cal O}\Rightarrow \varphi\equiv \psi\equiv 0 \hbox{ in }Q
\end{equation}
is not satisfied. Indeed, if $(\varphi,\pi,\psi):=(0,0,e^{\mu t}\nabla z)$ where $z$ is a radially symetric solution of (\ref{spectral2}), then $(\varphi,\pi,\psi)$ is a solution of (\ref{adjoint}) which does not satisfy (\ref{UC}).
\end{remark}

The rest of the article is structured as follows: in the first part, we develop a strategy to prove two Carleman estimates adapted to the linear adjoint systems.
In the second part, we prove the observability of the linear adjoint systems and deduce the local controllability of the semilinear systems.
\section{Carleman estimates}

\subsection{Statement of the Carleman Inequalities}

We first set some notations. Let $\Omega_0$ be an open set satisfying $\overline{\Omega}_0\subset {\cal O}$ and $\eta\in {\cal C}^2\left(\overline{\Omega}\right) $ be a function such that
$$ \eta >0 \text{ in } \Omega, \ \eta=0 \text{ on } \partial \Omega, |\nabla \eta| >0 \text{ in }  \overline{\Omega} \backslash{\Omega_0}.$$
The existence of such a function $\eta$ is proved in \cite{FI} (see also \cite[Lemma 2.1]{IPY}). As usual in the context of Carleman estimates, we also define the following weight functions
$$
\begin{array}{c}\displaystyle
\alpha(t,x):= \frac{e^{2 \lambda \|\eta \|_{L^{\infty}(\Omega)}}-e^{\lambda \eta(x)}}{\ell(t)^m}\\ \noalign{\medskip}\displaystyle
\xi(t,x):= \frac{e^{\lambda \eta(x)}}{\ell(t)^m}
\end{array}
$$
where $\lambda \geq 1$  is a large constant to be fixed later, $m$ is an integer and $\ell: [0,T] \to [0, \infty)$ is some ${\cal C}^\infty$ function (first introduced in \cite{FI}) such that
$\ell>0$ in $(0,T)$, $\ell$ is constant in $[3T/8,5T/8]$, reaches a maxima at $t=T/2$ and
\begin{equation}\label{ell}
\forall t \in \left[0, \frac{T}{4}\right], \ \ell(t) = t,  \ \forall t \in \left[\frac{3T}{4},T\right], \ \ell(t) = T-t.
\end{equation}

In the sequel, we define $\alpha^*$ as the supremum of $\alpha$ in $\Omega$ (which is also its value on $\partial \Omega$).


We shall now state the two main Carleman estimates of the paper:
\begin{proposition}\label{Carleman3}
Let $d=3$, $m=8$ and $\overline{\omega} \in  L^\infty \left(0,T; W^{1,3+\delta}(\Omega)\right)\cap H^1\left(0,T; L^3(\Omega)\right)$ for some $\delta>0$. Then, for any $T>0$, there exist $C>0$ and $s_0>0$ such that for every $s\geq s_0$, the following inequality is satisfied for every $g_0 \in L^2(0,T; V)$ and every $g_1 \in L^2(0,T;L^2(\Omega))$,
\begin{equation}\label{CI}
\begin{array}{l}\displaystyle
s^2 \int_Q e^{-2s\alpha} \xi^2 |\psi|^2+\int_Q e^{-2s\alpha} |\varphi|^2
\\ \noalign{\medskip}\displaystyle
\hskip2cm\leq C\left(s^{-3} \int_Q e^{-2s \alpha} \xi^{-3}\left( |g_0|^2+|\nabla g_0|^2\right)+ \int_Qe^{-2s\alpha} |g_1|^2 + s^4 \int_{Q_{\cal O}} e^{-2s\alpha}\xi^{4} |\psi|^2 \right).
\end{array}
\end{equation}
where $Q_{\cal O}= (0,T)\times {\cal O}$ and $(\varphi,\psi)$ is any solution of (\ref{adjoint}).
\end{proposition}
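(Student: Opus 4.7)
The target inequality combines a strong Carleman bound on $\psi$ with weight $s^2\xi^2$ and a weak, energy-type bound on $\varphi$ with weight $e^{-2s\alpha}$ (no $s$ nor $\xi$), while all the localised information is on $\psi$. My plan is therefore to squeeze a full parabolic Carleman out of the $\psi$-equation, get an auxiliary Stokes-type Carleman on $\varphi$, then trade the local $\varphi$ term produced by the second for the local $\psi$ term allowed by the first, losing some powers of $s$ in the process.

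Concretely, I would first rewrite the $\psi$-equation componentwise as $-\partial_t\psi_i-\Delta\psi_i=\partial_i(\nabla\cdot\psi)+(\nabla\times\varphi)_i+(g_1)_i$ and apply the Fursikov--Imanuvilov parabolic Carleman inequality to each $\psi_i$ (with Dirichlet data). This produces an LHS of order $s^3\xi^3|\psi|^2+s\xi|\nabla\psi|^2+s^{-1}\xi^{-1}|D^2\psi|^2$, a local observation term $s^3\xi^3|\psi|^2$ on $\mathcal O$, and RHS sources $|\nabla\times\varphi|^2$, $|\nabla(\nabla\cdot\psi)|^2$, $|g_1|^2$, the middle one being of order one in $\psi$ hence absorbable into the gradient LHS for large $s$. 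I would then apply a Stokes Carleman estimate in the spirit of \cite{FCG,FI} to $\varphi$, viewed as the solution of a backward Stokes system with source $P_1\psi+(\nabla\psi)^T\overline\omega+g_0$: the source contributions $|P_1\psi|^2$ and $|\nabla\psi|^2\|\overline\omega\|_\infty^2$ are absorbable by the LHS of the $\psi$-Carleman, using Sobolev embedding $W^{1,3+\delta}\hookrightarrow L^\infty$ in $d=3$, while $g_0$ enters through $|g_0|^2+|\nabla g_0|^2$ because the pressure is removed via $\nabla\cdot\varphi=0$ at the cost of one integration by parts.

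The crux is then to eliminate the local $|\varphi|^2$ term left by the Stokes Carleman, keeping only the allowed local observation of $\psi$. The identity $\nabla\times\varphi=-\psi_t-\Delta\psi-\nabla(\nabla\cdot\psi)-g_1$, coming from the $\psi$-equation, expresses $\nabla\times\varphi$ locally in terms of derivatives of $\psi$ and of $g_1$. Testing the $\varphi$-equation against a suitable localised, divergence-free function built from $\psi$ (to kill the pressure) and performing two integrations by parts to push derivatives onto the Carleman weights, one can replace the local $|\varphi|^2$ by local $|\psi|^2$ on a slightly enlarged set $\omega'\Subset\mathcal O$, at the cost of extra powers of $s\xi$; a bootstrap of the local higher-order $\psi$-norms (of order $|\psi_t|, |D^2\psi|$) into $L^2$ local norms of $\psi$ then produces the final $s^4\xi^4|\psi|^2$ weight. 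A weighted energy estimate for $\varphi$ (test against $e^{-2s\alpha}\varphi$, discard the pressure via $\nabla\cdot\varphi=0$) yields the $\int e^{-2s\alpha}|\varphi|^2$ term. The main difficulty I anticipate is precisely this last step: the book-keeping of weight powers so that $s^2\xi^2$ on the $\psi$-LHS and $s^4\xi^4$ in the observation match, and the incorporation of the nonlinear coupling $(\nabla\psi)^T\overline\omega$ under only the stated regularity of $\overline\omega$, will be the technically heaviest parts, and will force the specific choice $m=8$ so that $\partial_t\alpha$ and higher time-derivatives are dominated by the relevant powers of $\xi$.
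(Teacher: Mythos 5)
There is a genuine gap at the very first step, and it is the central structural difficulty of this proposition. When you write the $\psi$-equation componentwise as $-\partial_t\psi_i-\Delta\psi_i=\partial_i(\nabla\cdot\psi)+(\nabla\times\varphi)_i+(g_1)_i$ and apply the Fursikov--Imanuvilov Carleman estimate, the source $\nabla(\nabla\cdot\psi)$ is \emph{not} ``of order one in $\psi$'': it is a second-order derivative of $\psi$, and it enters the right-hand side as $\int_Q e^{-2s\alpha}|\nabla(\nabla\cdot\psi)|^2$, i.e.\ with weight $(s\xi)^0$, whereas the left-hand side controls second derivatives of $\psi$ only with weight $s^{-1}\xi^{-1}$ (or $s^{-2}\xi^{-2}$ at the shifted level needed here). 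The absorption therefore fails by a factor $s\xi\to\infty$, and no choice of $s$ large fixes it. The paper's way around this is to observe that $\nabla\cdot\psi$ satisfies its \emph{own} scalar heat equation $-\partial_t(\nabla\cdot\psi)-2\Delta(\nabla\cdot\psi)=\nabla\cdot g_1$ (the coupling with $\varphi$ disappears because $\nabla\cdot(\nabla\times\varphi)=0$), to which the Carleman estimate of Imanuvilov--Puel--Yamamoto for equations \emph{without prescribed boundary values} is applied; this yields a global bound on $s^{-1}\int_Q e^{-2s\alpha}\xi^{-1}|\nabla(\nabla\cdot\psi)|^2$, which is then exactly the quantity needed to absorb the offending source in the Carleman estimate for $\psi$. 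Without some substitute for this step your estimate of $\psi$ does not close.

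A second, related omission is the treatment of boundary terms. Both $\nabla\cdot\psi$ and the pressure-free quantity obtained from the momentum equation (the paper uses $\nabla(\nabla\times\varphi)$, which solves a heat equation with source $\nabla(\nabla\times\nabla\times\psi+\nabla\times[(\nabla\psi)^T\overline\omega]+\nabla\times g_0)$) have no prescribed boundary values, so the Carleman inequalities used for them necessarily produce trace terms of the type $\|e^{-s\alpha}\xi^{a}\,\cdot\,\|_{H^{1/2,1/4}(\Sigma)}$. Absorbing these is a substantial part of the paper's argument: it requires maximal-regularity estimates for the heat and Stokes systems applied to carefully weighted functions $\sigma_0\psi$ and $\sigma_1\varphi$ (Lemmata \ref{estimationM} and \ref{estimationL}), and it is here --- not in the coupling $(\nabla\psi)^T\overline\omega$ --- that the precise weight exponents and the hypothesis $\overline\omega\in L^\infty(0,T;W^{1,3+\delta})\cap H^1(0,T;L^3)$ (via Lemma \ref{estimationL2H-1}) are consumed. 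Your proposal invokes a ``Stokes Carleman in the spirit of \cite{FCG,FI}'' for $\varphi$ directly, but such an estimate would require the source $g_0$ with weight $(s\xi)^0$, which is stronger than the $s^{-3}\xi^{-3}(|g_0|^2+|\nabla g_0|^2)$ allowed in the statement; the paper instead runs the Carleman at the shifted level $s^{-3/2}\xi^{-3/2}\nabla(\nabla\times\varphi)$ precisely to land on the stated right-hand side. The final step of your plan (expressing $\nabla\times\varphi$ locally through the $\psi$-equation and integrating by parts to convert the local $\varphi$-observation into a local $\psi$-observation with weight $s^4\xi^4$) does coincide with the paper's argument and is sound, but it rests on the global bounds $I(\psi)$ and $J(\varphi)$ that the earlier gaps prevent you from establishing.
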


\begin{proposition}\label{Carleman2}
Let $d=2$ and $m \geq 6$. For any $T>0$, there exist $C>0$ and $s_0>0$ such that for every $s\geq s_0$, the following inequality is satisfied for every $g_0 \in L^2(0,T; H^2(\Omega)\cap V)$ and every $g_1 \in L^2(0,T;H^2(\Omega)\cap H^1_0(\Omega))$,
\begin{equation}\label{CI9}
\begin{array}{l}\displaystyle
s^{-1} \int_Q e^{-2s\alpha} \xi^{-1} |\Delta\varphi|^2+s^{-2} \int_Q e^{-2s\alpha} \xi^{-2} |\Delta\psi|^2
\\ \noalign{\medskip}\displaystyle
\hskip2cm\leq C\left(s^{-2} \int_Q e^{-2s \alpha} \xi^{-2}\left( |g_0|^2+|\nabla g_0|^2+|\nabla^2 g_0|^2\right)\right.
\\ \noalign{\medskip}\displaystyle
\hskip2cm
\left.
+s^{-2} \int_Q e^{-2s \alpha} \xi^{-2}\left( |g_1|^2+|\nabla g_1|^2+|\nabla^2 g_1|^2\right)
+ s^{15} \int_{Q_{\cal O}} e^{-2s\alpha}\xi^{15} |\varphi|^2 \right).
\end{array}
\end{equation}
where $(\varphi,\psi)$ is any solution of (\ref{adjoint}).
\end{proposition}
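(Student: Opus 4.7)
The plan is to derive \eqref{CI9} by combining a Carleman estimate for the 2D Stokes operator applied to $(\varphi,\pi)$ with the classical Imanuvilov--Fursikov Carleman for the scalar heat operator applied to $\psi$, then absorbing the coupling terms and exchanging all local observations for one on $\varphi$ alone via the equations themselves.

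Since $d=2$, the terms $(d-2)(\nabla\psi)^T\overline\omega$ and $(d-2)\nabla(\nabla\cdot\psi)$ vanish, $P_1\psi=\nabla^\perp\psi$, and $\nabla\times\varphi$ is scalar. Setting $w:=\nabla\times\varphi$, the system \eqref{adjoint} reads
\[
-\varphi_t-\Delta\varphi+\nabla\pi=\nabla^\perp\psi+g_0,\quad \nabla\cdot\varphi=0,\qquad -\psi_t-\Delta\psi=w+g_1\qquad\text{in }Q,
\]
with $\varphi=0$, $\psi=0$ on $\Sigma$. From $\nabla\cdot\varphi=0$ one has pointwise $\Delta\varphi=\nabla^\perp w$, so the first LHS term of \eqref{CI9} equals $s^{-1}\int_Q e^{-2s\alpha}\xi^{-1}|\nabla w|^2$, which will be estimated through a Carleman for $\varphi$ (or equivalently $w$).

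I would then apply two Carleman estimates in parallel. The first is the Carleman inequality for the Stokes adjoint with a local observation of $\varphi$ in ${\cal O}$ (in the spirit of Coron--Guerrero and Fern\'andez-Cara--Guerrero--Imanuvilov--Puel), yielding on its LHS the standard quantities $s^3\xi^3|\varphi|^2 + s\xi|\nabla\varphi|^2 + s^{-1}\xi^{-1}|\Delta\varphi|^2$, and on its RHS $\int_Q e^{-2s\alpha}(|\nabla\psi|^2+|g_0|^2)$ together with a local term in $\varphi$. The second is the classical Carleman estimate for $-\partial_t-\Delta$ with Dirichlet data applied to $\psi$, producing on its LHS $s^3\xi^3|\psi|^2+s\xi|\nabla\psi|^2+s^{-1}\xi^{-1}|\Delta\psi|^2$, and on its RHS $\int_Q e^{-2s\alpha}(|w|^2+|g_1|^2)$ together with a local observation of $\psi$. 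Adding the two inequalities and taking $s\ge s_0$ large, the coupling terms $\int e^{-2s\alpha}|\nabla\psi|^2$ and $\int e^{-2s\alpha}|w|^2\le \int e^{-2s\alpha}|\nabla\varphi|^2$ are absorbed into the $\psi$- and $\varphi$-parts of the LHS respectively. To eliminate the remaining local $\psi$-observation, I would multiply the second equation by $e^{-2s\alpha}\xi^{c}\chi^2\psi$ (with $\chi$ a cutoff adapted to ${\cal O}$) and integrate by parts in $x$ and $t$, reducing it to local integrals of $w$, $\nabla\psi$ (absorbed back into the $\psi$-LHS), and $g_1$. Finally, the remaining local $w$-term is converted into a local $\varphi$-term through $w=\nabla\times\varphi$ and iterated integrations by parts, each shifting a derivative onto the weight at the cost of a factor $s\xi$; accumulating these contributions gives the final power $s^{15}\xi^{15}$ in \eqref{CI9}.

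The main obstacle will be establishing the Stokes Carleman with observation of $\varphi$ alone, which requires careful handling of the pressure through the elliptic problem $\Delta\pi=\nabla\cdot(\nabla^\perp\psi+g_0)$ and a suitable decomposition of $\varphi$ to avoid losing control on boundary traces of $w=\nabla\times\varphi$ (which does not vanish on $\partial\Omega$). A secondary difficulty is the bookkeeping of weights through the integrations by parts that reduce from $\nabla w$ down to $\varphi$: these explain both the large exponent $15$ and the appearance of $|\nabla g_i|^2, |\nabla^2 g_i|^2$ on the RHS, since each integration transfers a derivative either onto the weight or onto the source $g_0,g_1$. The restriction $m\ge 6$ should be the minimal value ensuring enough temporal decay of $\ell(t)^{-m}$ near $t=0,T$ to absorb all the $\partial_t(\xi^a)$ commutators generated along the way.
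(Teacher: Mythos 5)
There is a genuine gap, and it sits at the heart of your plan: the elimination of the local observation of $\psi$. The classical Fursikov--Imanuvilov Carleman estimate applied to $\psi$ (with Dirichlet data) produces a local term of the form $s^3\int_{Q_1}e^{-2s\alpha}\xi^3|\psi|^2$, i.e.\ a \emph{zeroth-order} observation of $\psi$. But in dimension $2$ the only way $\psi$ feeds back into the $\varphi$-equation of \eqref{adjoint} is through $P_1\psi=\nabla^\perp\psi$, and after taking the curl (to kill the pressure) through $\Delta\psi$. Consequently only local observations of \emph{derivatives} of $\psi$ can be traded for local observations of $\varphi$; a local integral of $|\psi|^2$ cannot, since the equations never express $\psi$ itself in terms of $\varphi$. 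Your proposed fix --- multiplying $-\psi_t-\Delta\psi=\nabla\times\varphi+g_1$ by $e^{-2s\alpha}\xi^c\chi^2\psi$ and integrating by parts --- runs in the wrong direction: that energy identity bounds the local integral of $|\nabla\psi|^2$ by local integrals of $|\psi|^2$, $(\nabla\times\varphi)\psi$ and $g_1\psi$; it gives no control of the local $|\psi|^2$ term, which is exactly what must be removed. This is why the paper does not apply the heat Carleman to $\psi$ at all: it applies the Laplacian to both equations (which eliminates $\pi$ since $\Delta\pi=\nabla\cdot g_0$) and uses the Carleman estimate of Fern\'andez-Cara--Gonz\'alez-Burgos--Guerrero--Puel on $\Delta\varphi$ and $\Delta\psi$, so that the local observation produced is $s^{-2}\int_{Q_1}e^{-2s\alpha}\xi^{-2}|\Delta\psi|^2$. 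That term \emph{can} be converted, via $\Delta\psi=-(\nabla\times\varphi)_t-\Delta(\nabla\times\varphi)-\nabla\times g_0$ and successive integrations by parts, into local terms in $\nabla\times\varphi$ and finally $s^{15}\xi^{15}|\varphi|^2$.

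Two further points. First, working with $\Delta\varphi$ and $\Delta\psi$ means the Carleman estimates are applied to functions without prescribed boundary values, so boundary terms in $\partial_\nu\Delta\varphi$ and $\partial_\nu\Delta\psi$ appear; absorbing them requires a trace interpolation lemma together with weighted maximal parabolic regularity estimates in $L^2(0,T;H^4(\Omega))$, and this is precisely where the hypotheses $g_0,g_1\in L^2(0,T;H^2(\Omega))$ and the terms $|\nabla^2 g_0|^2$, $|\nabla^2 g_1|^2$ in \eqref{CI9} come from --- your attribution of these to derivatives landing on the sources during local integrations by parts does not account for them, since those integrations are localized in ${\cal O}$ while the $\nabla^2g_i$ terms in \eqref{CI9} are global. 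Second, even setting aside the main obstruction, your intermediate estimates carry the sources as $\int_Q e^{-2s\alpha}|g_i|^2$ without the factor $s^{-2}\xi^{-2}$; since $s^{-2}\xi^{-2}\to 0$, this yields a strictly weaker inequality than \eqref{CI9}, and the precise weights on $g_0,g_1$ are what make the subsequent null-controllability argument work.
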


\subsection{Proof of Proposition \ref{Carleman3}}

Our proof will rely on the Carleman estimate developped in \cite{IPY} and  on classical regularity estimates for the heat and Stokes systems (see Lemmata \ref{estimationM} and \ref{estimationL}).

More precisely, in order to avoid the pressure we will be led to apply some differential operators (such as $\nabla\times$ or $\Delta$) to our system so the new variables will not have prescribed boundary values. We will estimate these new variables thanks to the results of \cite{IPY} and \cite{FGGP}, where Carleman inequalities adapted to this situation are established. Finally, the boundary terms appearing will be absorbed by the left-hand side terms using regularity estimates for our system.

Throughout the proof, we will use the anisotropic Sobolev space
$$
H^{1/2,1/4}(\Sigma):=L^2(0,T;H^{1/2}(\partial\Omega))\cap H^{1/4}(0,T;L^2(\partial\Omega)).
$$
From its definition and standard trace estimates (see \cite{LionsMagenes}), one gets that if $f \in L^2(0,T;H^1(\Omega))$ is such that $\partial_t f \in L^2(0,T;H^{-1}(\Omega))$ then $f \in H^{1/2,1/4}(\Sigma)$ and
\begin{equation}\label{SobolevAnisotrope}
\|f\|_{H^{1/2,1/4}(\Sigma)} \lesssim \|f\|_{L^2(0,T;H^1(\Omega))}+\|\partial_t f\|_{L^2(0,T;H^{-1}(\Omega))} .
\end{equation}
Here and in the sequel, we use the notation $a \lesssim b$ to indicate the existence of a constant $C>0$ depending only on $\Omega$, ${\cal O}$ and $T$ such that $a \leq C b$.



\subsubsection{Estimate of $\psi $}

We first apply the divergence operator to the second equation of (\ref{adjoint}), which gives (since this operator commutes with the usual Laplacian operator):
$$
-\partial_t (\nabla \cdot  \psi)-2 \Delta(\nabla \cdot  \psi)=\nabla \cdot  g_1.
$$
For this nonhomogeneous heat equation, we apply the Carleman estimate presented in \cite[Theorem 2.1]{IPY}:
\begin{eqnarray}\label{equation1}s^{-1}\int_Q e^{-2 s\alpha} \xi^{-1} |\nabla (\nabla \cdot  \psi) |^2 &\lesssim& s^{-1/2} \left\|e^{-s\alpha}\xi^{-1/4} \nabla \cdot  \psi \right\|_{H^{1/2,1/4}(\Sigma)}^2+s^{-1/2}\left\|e^{-s\alpha}\xi^{-1/8} \nabla \cdot  \psi \right\|_{L^2(\Sigma)}^2\nonumber \\
&+&  \int_Q e^{-2s\alpha} |g_1|^2 + s\int_{Q_1}e^{-2s\alpha} \xi |\nabla \cdot  \psi|^2
\end{eqnarray}
for $s\gtrsim 1$, where $Q_1:=(0,T)\times \Omega_1$ and $\Omega_1$ is any non empty open subset such that $\overline{\Omega}_0 \subset \Omega_1$ and $\overline{\Omega}_1\subset{\cal O}$. 	

Moreover, since $\psi$ satisfies the system
\begin{equation*}
 \left \{
\begin{array}{ccc}
   (-\partial_t-\Delta) \psi&=&\nabla(\nabla \cdot \psi)+\nabla \times \varphi+g_1\\
   \psi&=&0
  \end{array} \right.
  \begin{array}{c}
   \text{in } Q,\\
   \text{on } \Sigma,
  \end{array}
\end{equation*}
a classical Carleman estimate for the heat equation (see e.g. \cite{FI}) gives us:
\begin{eqnarray}\label{equation2}s^2 \int_Q e^{-2s \alpha} \xi^2 |\psi|^2+\int_Q e^{-2s\alpha} |\nabla \psi|^2+s^{-2}\int_Q e^{-2s\alpha}\xi^{-2} |\nabla \nabla \psi|^2 &\lesssim& s^{-1} \int_Q e^{-2s \alpha} \xi^{-1} |\nabla(\nabla \cdot  \psi)|^2\nonumber\\
&+& s^{-1} \int_Q e^{-2s\alpha} \xi^{-1} \left( |\nabla \times \varphi|^2+|g_1|^2\right) \nonumber \\
&+&s^2 \int_{Q_1} e^{-2s\alpha} \xi^2 |\psi|^2
\end{eqnarray}
for any $s\gtrsim 1$. Consequently, a combination of \eqref{equation1} and \eqref{equation2} yields the estimate
\begin{equation}\label{equation3}
\begin{array}{l}\displaystyle
s^2 \int_Q e^{-2s \alpha} \xi^2 |\psi|^2+\int_Q e^{-2s\alpha} |\nabla \psi|^2+s^{-2}\int_Q e^{-2s\alpha}\xi^{-2} |\nabla \nabla \psi|^2
+s^{-1}\int_Q e^{-2 s\alpha} \xi^{-1} |\nabla (\nabla \cdot  \psi) |^2
\\ \noalign{\medskip}\displaystyle
\lesssim  s^{-1/2} \left\|e^{-s\alpha}\xi^{-1/4} \nabla \cdot  \psi \right\|_{H^{1/2,1/4}(\Sigma)}^2
+s^{-1/2}\left\|e^{-s\alpha}\xi^{-1/8} \nabla \cdot  \psi \right\|_{L^2(\Sigma)}^2
+ \int_Q e^{-2s\alpha} |g_1|^2
\\ \noalign{\medskip}\displaystyle
+s^{-1} \int_Qe^{-2s\alpha} \xi^{-1} |\nabla \times \varphi|^2
+s \int_{Q_1}e^{-2s\alpha} \xi |\nabla \cdot \psi|^2
+s^2 \int_{Q_1} e^{-2s\alpha} \xi^2 |\psi|^2.
\end{array}
\end{equation}
Furthermore, if $Q_2$ is any open subset of $Q$ of the form $ (0,T)\times \Omega_2$ such that $\overline{\Omega}_1 \subset \Omega_2$ and $\overline{\Omega}_2 \subset {\cal O}$, an integration by parts easily gives
$$s\int_{Q_1} e^{-2s\alpha} \xi |\nabla \cdot  \psi|^2 \leq \varepsilon s^{-1} \int_{Q_2} e^{-2s\alpha} \xi^{-1} |\nabla(\nabla \cdot \psi)|^2+ C\varepsilon^{-1} s^3 \int_{Q_2} e^{-2s\alpha} \xi^3 |\psi|^2$$
for any $\varepsilon>0$ and some $C>0$.

\noindent Choosing $\varepsilon$ sufficiently small, one consequently gets from \eqref{equation3},
\begin{equation}\label{equation4}
\begin{array}{l}\displaystyle
s^2 \int_Q e^{-2s \alpha} \xi^2 |\psi|^2+\int_Q e^{-2s\alpha} |\nabla \psi|^2 +s^{-2}\int_Q e^{-2s\alpha}\xi^{-2} |\nabla \nabla \psi|^2
\\ \noalign{\medskip}\displaystyle
\lesssim B_1+ \int_Q e^{-2s\alpha} |g_1|^2+s^{-1} \int_Qe^{-2s\alpha} \xi^{-1} |\nabla \times \varphi|^2
+s^3 \int_{Q_2} e^{-2s\alpha} \xi^3 |\psi|^2
\end{array}
\end{equation}
where $B_1$ stands for the trace terms
$$s^{-1/2} \left\|e^{-s\alpha}\xi^{-1/4} \nabla \cdot  \psi \right\|_{H^{1/2,1/4}(\Sigma)}^2+s^{-1/2}\left\|e^{-s\alpha}\xi^{-1/8} \nabla \cdot  \psi \right\|_{L^2(\Sigma)}^2.$$
We shall now prove the following estimate:
\begin{equation}\label{traces}
B_1\leq  \varepsilon s^2 \int_Q e^{-2s\alpha} \xi^2 |\psi|^2+C \left(s^{-1/2}\int_Q e^{-2s\alpha} \xi^{-1/4} |g_1|^2+ s^{-1/2}\int_Qe^{-2s\alpha} \xi^{-1/4} |\nabla \times \varphi|^2\right)
\end{equation}
for any $\varepsilon>0$ and some $C>0$ (which may depend on $\varepsilon$).

To do so, let us consider $\xi^*=\xi_{|\Sigma}$ and  define the weight function $\sigma_0 (t):=s^{-1/4}(\xi^*(t))^{-1/4} e^{-s \alpha^*(t)}$.
Straightforward computations show that
\begin{equation*}
 \left \{
\begin{array}{ccc}
   -\partial_t(\sigma_0  \psi)-\Delta(\sigma_0 \psi) - \nabla(\nabla. (\sigma_0 \psi))&=&-\sigma_0 ' \psi +\sigma_0 \nabla \times \varphi+\sigma_0  g_1\\
   \sigma_0  \psi&=&0\\
   (\sigma_0  \psi)(T,\cdot)&=&0
  \end{array} \right.
  \begin{array}{c}
   \text{in } Q,\\
   \text{on } \Sigma,\\
   \text{in } \Omega
  \end{array}
\end{equation*}
and consequently, thanks to \eqref{SobolevAnisotrope},
$$s^{-1/2} \left\|e^{-s\alpha}\xi^{-1/4} \nabla \cdot  \psi \right\|_{H^{1/2,1/4}(\Sigma)}^2 \lesssim \| \sigma_0  \nabla\cdot \psi\|^2_{L^2(0,T;H^1(\Omega))}+\| \sigma_0  \psi\|^2_{H^1(0,T;L^{2}(\Omega))}.$$
Since $|\sigma_0 '| \lesssim s^{3/4} (\xi^*)^{7/8} e^{-s \alpha^*}$, one deduces using Lemma \ref{estimationM} (a) that for $s\gtrsim 1$,
 \begin{eqnarray*} \|\sigma_0  \psi \|^2_{L^2(0,T;H^2(\Omega))}+\|\sigma_0  \psi \|^2_{H^1(0,T;L^2(\Omega))} &\lesssim& \int_Q \left((\sigma_0 ')^2 |\psi|^2+ \sigma_0 ^2 |\nabla \times \varphi|^2+ \sigma_0 ^2 |g_1|^2\right)\\
  &\leq & C \left( s^{-1/2} \int_Q e^{-2s\alpha} \xi^{-1/2}|g_1|^2+ s^{-1/2}\int_Qe^{-2s\alpha}  \xi^{-1/2}|\nabla \times \varphi|^2\right)\\
&+& \varepsilon  s^{2} \int_Q e^{-2s\alpha} \xi^{2} |\psi|^2.
  \end{eqnarray*}
The estimate of the second term of $B_1$ is simpler, so we omit its proof. This concludes the proof of \eqref{traces}.

Finally, one immediately deduces from the last computations and \eqref{equation4} the estimate
\begin{eqnarray}\label{bilan1}
I(\psi) \lesssim \int_Q e^{-2s\alpha} |g_1|^2+ s^{-1/2} \int_Qe^{-2s\alpha} \xi^{-1/2} |\nabla \times \varphi|^2 +s^3 \int_{Q_2} e^{-2s\alpha} \xi^3 |\psi|^2 .
\end{eqnarray}
where
$$
I(\psi):= s^{-1/2} \int_Q e^{-2s \alpha^*} (\xi^*)^{-1/2} |\psi_t|^2+s^2 \int_Q e^{-2s \alpha} \xi^2 |\psi|^2+\int_Q e^{-2s\alpha} |\nabla \psi|^2+s^{-2}\int_Q e^{-2s\alpha}\xi^{-2} |\nabla \nabla \psi|^2 .
$$
In order to get an estimate in terms of a local term of $\psi$ only, our next goal is to get rid of the term $\displaystyle s^{-1/2}\int_Qe^{-2s\alpha}\xi^{-1/2} |\nabla \times \varphi|^2$.

\subsubsection{Estimate of the global term in $\nabla \times \varphi$}
To do so, we first apply the curl operator then the gradient operator to the first equation of (\ref{adjoint}). One easily gets
$$
-\partial_t\left(\nabla(\nabla \times \varphi)\right)-\Delta\left(\nabla(\nabla \times \varphi)\right)=\nabla\left(\nabla \times \nabla \times \psi+\nabla \times \left[(\nabla \psi)^T \overline{\omega}\right]+\nabla \times g_0\right).
$$
 We apply again \cite[Theorem 2.1]{IPY} with different powers of $\xi$. More precisely, we apply that Carleman estimate to $s^{-3/2}\xi^{-3/2}\nabla(\nabla\times\varphi)$ and we get
\begin{eqnarray} \label{equation5}
s^{-2} \int_Q e^{-2s\alpha} \xi^{-2} |\nabla(\nabla \times \varphi)|^2&+&s^{-4} \int_Q e^{-2s\alpha} \xi^{-4} |\nabla \nabla(\nabla \times \varphi)|^2 \nonumber\\
&\lesssim& s^{-7/2} \left\|e^{-s\alpha} \xi^{-7/4} \nabla(\nabla \times \varphi)\right\|_{H^{1/2,1/4}(\Sigma)}^2+s^{-7/2}\left\|e^{-s\alpha} \xi^{-13/8} \nabla(\nabla \times \varphi)\right\|_{L^2(\Sigma)}^2\nonumber\\
&+& s^{-3} \int_Q e^{-2s\alpha}\xi^{-3} \left(s^2 \xi^2|\nabla \psi|^2+ |\nabla \nabla \psi|^2\right)+s^{-3} \int_Q e^{-2s\alpha} \xi^{-3} |\nabla \times g_0|^2\nonumber\\
&+& s^{-2} \int_{Q_1}e^{-2s\alpha}\xi^{-2} |\nabla(\nabla \times \varphi)|^2.
\end{eqnarray}
Here, we have used (\ref{pot}) and the fact that $\overline{\omega} \in L^\infty(0,T; W^{1,3}(\Omega))$.

Using Lemma \ref{dominationH1} for $u:=\nabla\times\varphi$, one directly deduces from \eqref{equation5},
\begin{eqnarray}\label{equation6}
J(\varphi) &\lesssim& s^{-7/2} \left\|e^{-s\alpha} \xi^{-7/4} \nabla(\nabla \times \varphi)\right\|_{H^{1/2,1/4}(\Sigma)}^2+s^{-7/2}\left\|e^{-s\alpha} \xi^{-13/8} \nabla(\nabla \times \varphi)\right\|_{L^2(\Sigma)}^2\nonumber\\
&+& s^{-3} \int_Q e^{-2s\alpha}\xi^{-3}\left(s^2 \xi^2|\nabla \psi|^2+ |\nabla \nabla \psi|^2\right)+s^{-3} \int_Q e^{-2s\alpha} \xi^{-3} |\nabla \times g_0|^2\nonumber\\
&+& s^{-2} \int_{Q_1}e^{-2s\alpha}\xi^{-2} |\nabla(\nabla \times \varphi)^2|+ \int_{Q_1} e^{-2s\alpha} |\nabla \times \varphi|^2
\end{eqnarray}
where
$$J(\varphi):=\int_Qe^{-2s\alpha} |\nabla \times \varphi|^2+s^{-2} \int_Q e^{-2s\alpha} \xi^{-2} |\nabla(\nabla \times \varphi)|^2 +s^{-4} \int_Q e^{-2s\alpha} \xi^{-4} |\nabla \nabla(\nabla \times \varphi)|^2.$$
Moreover, an integration by parts gives us in the same way as above
$$
s^{-2}\int_{Q_1} e^{-2s\alpha} \xi^{-2} |\nabla(\nabla \times \varphi)|^2 \leq \varepsilon s^{-4} \int_{Q_2} e^{-2s\alpha} \xi^{-4} |\nabla \nabla(\nabla \times \varphi)|^2+ C\int_{Q_2} e^{-2s\alpha} |\nabla \times \varphi|^2
$$
for any $\varepsilon>0$ and some $C>0$ depending on $\varepsilon$.  This allows us, by an appropriate choice of $\varepsilon>0$, to get from \eqref{equation6}
\begin{eqnarray}\label{equation7}
J(\varphi) \lesssim B_2+  s^{-3} \int_Q e^{-2s\alpha}\xi^{-3} \left(s^2\xi^2|\nabla \psi|^2+ |\nabla \nabla \psi|^2\right)+s^{-3} \int_Q e^{-2s\alpha} \xi^{-3} |\nabla \times g_0|^2+ \int_{Q_2} e^{-2s\alpha} |\nabla \times \varphi|^2
\end{eqnarray}
where $B_2$ stands for the trace terms
$$s^{-7/2} \left\|e^{-s\alpha} \xi^{-7/4} \nabla(\nabla \times \varphi)\right\|_{H^{1/2,1/4}(\Sigma)}^2+s^{-7/2}\left\|e^{-s\alpha} \xi^{-13/8} \nabla(\nabla \times \varphi)\right\|_{L^2(\Sigma)}^2 .$$
We shall now prove the following estimate:
\begin{equation}\label{traces2}
B_2 \leq \varepsilon \left(\int_Q e^{-2s\alpha} |\nabla \times \varphi|^2+I(\psi)\right)+C s^{-7/2} \int_Q e^{-2s\alpha} \xi^{-13/4} \left(|\nabla g_0|^2+|g_0|^2\right)
\end{equation}
for any $\varepsilon>0$ and some $C>0$ (which may depend on $\varepsilon$).

To do so, we define the weight function $\sigma_1 (t):=s^{-7/4}(\xi^*(t))^{-13/8} e^{-s \alpha^*(t)}$ and consider the system satisfied by $\sigma_1  \varphi$:
\begin{equation*}
 \left \{
\begin{array}{ccc}
   (-\partial_t-\Delta)(\sigma_1  \varphi)+\sigma_1  \nabla \pi &=&-\sigma_1 ' \varphi +\sigma_1  g_0 +\sigma_1 \left(\nabla \times \psi+(\nabla \psi)^T  \overline{\omega}\right)\\
   \nabla\cdot(\sigma_1  \varphi)&=&0\\
   \sigma_1  \varphi&=&0\\
   (\sigma_1 \psi)(T,\cdot)&=&0
  \end{array} \right.
  \begin{array}{c}
   \text{in } Q,\\
    \text{in } Q,\\
   \text{on } \Sigma,\\
   \text{in } \Omega.
  \end{array}
\end{equation*}
Thanks to \eqref{SobolevAnisotrope}, we first get
$$s^{-7/2} \left\|e^{-s\alpha}\xi^{-13/8} \nabla( \nabla \times \varphi) \right\|_{H^{1/2,1/4}(\Sigma)}^2 \lesssim \| \sigma_1  \varphi\|^2_{L^2(0,T;H^3(\Omega))}+\| \sigma_1  \psi\|^2_{H^1(0,T;H^1(\Omega))}.$$
Then, we apply Lemma \ref{estimationL} (b) with $h_V:=-\sigma_1' \varphi+\sigma_1 g_0$ and $h:=\sigma_1 \left(\nabla \times \psi+(\nabla \psi)^T  \overline{\omega}\right)$. One obtains:
$$
\begin{array}{l}\displaystyle
\| \sigma_1  \varphi\|^2_{L^2(0,T;H^3(\Omega))}+\| \sigma_1  \psi\|^2_{H^1(0,T;H^1(\Omega))}
\\ \noalign{\medskip}\displaystyle
\hskip2cm\lesssim \| \sigma_1'  \varphi\|^2_{L^2(0,T;V)}+\| \sigma_1  g_0\|^2_{L^2(0,T;V)}+\| \sigma_1 \nabla \nabla \psi\|^2_{L^2(0,T;L^2(\Omega))}+\| \sigma_1  \psi\|^2_{H^1(0,T;L^2(\Omega))},
\end{array}
$$
where we have used Lemma \ref{estimationL2H-1} in order to estimate the second term in the definition of $h$.


\noindent For $s$ large enough, we consequently find
\begin{eqnarray*}
s^{-7/2} \left\|e^{-s\alpha}\xi^{-13/8} \nabla( \nabla \times \varphi) \right\|_{H^{1/2,1/4}(\Sigma)}^2 &\leq& \varepsilon \left( \int_Q e^{-2s\alpha} |\nabla \times \varphi|^2+ I(\psi)\right)\nonumber \\
&+& C s^{-7/2} \int_Q e^{-2s\alpha} \xi^{-13/4} \left( |g_0|^2+ |\nabla g_0 |^2\right).
\end{eqnarray*}
This concludes the proof of \eqref{traces2}.

Using \eqref{traces2}, we now infer from \eqref{equation7} that, for any $\varepsilon>0$ there exists $C>0$ such that
\begin{eqnarray}\label{bilan2}
J(\varphi) &\leq& \varepsilon I(\psi)+C \int_{Q_2} e^{-2s\alpha} |\nabla \times \varphi|^2 \nonumber \\
&+&  C \left( s^{-3} \int_Q e^{-2s\alpha}\xi^{-3}\left(s^2 \xi^2|\nabla \psi|^2+ |\nabla \nabla \psi|^2\right) +s^{-3} \int_Q e^{-2s\alpha}\xi^{-3} \left(|g_0|^2+|\nabla g_0|^2\right)\right).
\end{eqnarray}
Combining \eqref{bilan1} and \eqref{bilan2} and choosing an appropriate value of $\varepsilon>0$, one can now conclude that
\begin{eqnarray}\label{bilan3}
I(\psi)+J(\varphi)&\lesssim& s^{-3}\int_Q e^{-2s\alpha} \xi^{-3} \left(|g_0|^2+|\nabla \times g_0|^2\right)+ \int_Q e^{-2s\alpha} |g_1|^2\nonumber\\
&+& \int_{Q_2} e^{-2s\alpha} |\nabla \times \varphi|^2+s^3 \int_{Q_2} e^{-2s\alpha} \xi^3 |\psi|^2
\end{eqnarray}
for $s$ large enough.

\subsubsection{Estimate of the local term in $\nabla \times \varphi$}
Using the second equation of (\ref{adjoint}), one first has
$$
\int_{Q_2} e^{-2s\alpha} |\nabla \times \varphi|^2 \leq\int_{Q} \eta_2 e^{-2s\alpha} |\nabla \times \varphi|^2  =\int_{Q} \eta_2 e^{-2s\alpha} (\nabla \times \varphi)\cdot(-\psi_t-\Delta \psi -\nabla(\nabla\cdot \psi)-g_1),
$$
where $\eta_2: \Omega \to [0,+\infty[$ is some non--negative regular function supported in ${\cal O}$ such that $\eta_2=1$ on $\Omega_2$.
Similarly as above, integrations by parts now show that
\begin{eqnarray*}
\int_{Q_2} e^{-2s\alpha} |\nabla \times \varphi|^2 &\leq& \int_{Q} \eta_2 e^{-2s \alpha} (\nabla \times \varphi_t)\cdot \psi + C\int_{Q_{\cal O}}e^{-2s\alpha} |\nabla \times \varphi||g_1|\\
&+&C\left(  \int_{Q_{\cal O}} e^{-2s \alpha} |\psi| \left(|\nabla (\nabla \times \varphi)|+|\nabla \nabla (\nabla \times \varphi)|\right)+ s^2 \int_{Q_{\cal O}} e^{-2s\alpha} \xi^{2} |\nabla \times \varphi| |\psi|\right)\\
&\leq&  \int_{Q} \eta_2 e^{-2s \alpha} (\nabla \times \varphi_t)\cdot \psi + \varepsilon J(\varphi)+ C \left( s^4 \int_Qe^{-2s\alpha } \xi^4 |\psi|^2 + \int_Q e^{-2s\alpha} |g_1|^2\right)
\end{eqnarray*}
for some $C>0$ which may depend on $\varepsilon$. Moreover, applying the curl operator to the first equation of $(S')$ and using (\ref{pot}) and Young's inequality , one has
\begin{eqnarray*}
\int_{Q} \eta_2 e^{-2s \alpha} (\nabla \times \varphi_t)\cdot\psi &=&- \int_Q \eta_2 e^{-2s \alpha} \left( \Delta (\nabla \times \varphi)+ \nabla \times \nabla \times \psi+\nabla \times [(\nabla \psi)^T \overline{\omega}]+\nabla \times g_0\right)\cdot \psi\\
& \leq & \varepsilon \left( s^{-4} \int_Q e^{-2s \alpha} \xi^{-4} |\nabla \nabla (\nabla \times \varphi)|^2+s^{-2} \int_Q e^{-2s \alpha} \xi^{-2} \left(|\nabla \psi|^2+ |\nabla \nabla \psi|^2\right)\right)\\
&+& C \left(s^{-4} \int_{Q} e^{-2s \alpha} \xi^{-4} |\nabla g_0|^2+ s^{4} \int_{Q_{\cal O}}  e^{-2s \alpha} \xi^{4} |\psi|^2\right)
\end{eqnarray*}
for some $C>0$ which may depend on $\varepsilon$.

Finally, we have proved that for any $\varepsilon>0$ there exists $C>0$ such that
$$
\int_{Q_2} e^{-2s\alpha} |\nabla \times \varphi|^2 \leq \varepsilon\left(I(\psi)+J(\varphi)\right)+ C\left( s^{-4} \int_{Q} e^{-2s \alpha} \xi^{-4} |\nabla g_0|^2+\int_Q e^{-2s\alpha} |g_1|^2+s^{4} \int_{Q_{\cal O}}  e^{-2s \alpha} \xi^{4} |\psi|^2\right).
$$
Using now \eqref{bilan3}, the proof of Proposition \ref{Carleman3} is complete.

\subsection{Proof of Proposition~\ref{Carleman2}}

We apply the Laplacian operator to the first equation of (\ref{adjoint}). Since
$$
\Delta\pi = \nabla\cdot g_0,
$$
we get :
$$
-(\Delta\varphi)_t-\Delta(\Delta\varphi) = P_1\Delta\psi + \Delta g_0 - \nabla(\nabla\cdot g_0)\hbox{ in }Q.
$$
We now apply \cite[Theorem 1]{FGGP} to $\Delta\varphi$ and obtain
\begin{equation}\label{Carlemanvarphi}
\begin{array}{l}\displaystyle
s^{-3}\int_Q e^{-2s\alpha}\xi^{-3}|\nabla(\Delta\varphi)|^2+s^{-1}\int_Qe^{-2s\alpha}\xi^{-1}|\Delta\varphi|^2
\\ \noalign{\medskip}\displaystyle
\hskip3cm\lesssim s^{-3}\int_{\Sigma}e^{-2s\alpha}\xi^{-3}\left|\frac{\partial\Delta\varphi}{\partial\nu}\right|^2
+s^{-2}\int_Qe^{-2s\alpha}\xi^{-2}(|g_0|^2+|\nabla g_0|^2)
\\ \noalign{\medskip}\displaystyle
\hskip3cm+s^{-1}\int_{Q_1}e^{-2s\alpha}\xi^{-1}|\Delta\varphi|^2
+s^{-2}\int_{Q}e^{-2s\alpha}\xi^{-2}|\Delta\psi|^2
\end{array}
\end{equation}
for $s$ large enough.

We now apply the Laplacian operator to the second equation of (\ref{adjoint}). We get :
$$
-(\Delta\psi)_t-\Delta(\Delta\psi) = \nabla\times\Delta\varphi + \Delta g_1\hbox{ in }Q.
$$
We then apply \cite[Theorem 1]{FGGP} to $\Delta\psi$, which gives :
\begin{equation}\label{Carlemanpsi}
\begin{array}{l}\displaystyle
s^{-4}\int_Q e^{-2s\alpha}\xi^{-4}|\nabla(\Delta\psi)|^2+s^{-2}\int_Qe^{-2s\alpha}\xi^{-2}|\Delta\psi|^2
\\ \noalign{\medskip}\displaystyle
\hskip3cm\lesssim s^{-4}\int_{\Sigma}e^{-2s\alpha}\xi^{-4}\left|\frac{\partial}{\partial\nu}\Delta\psi\right|^2
+s^{-3}\int_Qe^{-2s\alpha}\xi^{-3}(|g_1|^2+|\nabla g_1|^2)
\\ \noalign{\medskip}\displaystyle
\hskip3cm+s^{-2}\int_{Q_1}e^{-2s\alpha}\xi^{-2}|\Delta\psi|^2
+s^{-3}\int_{Q}e^{-2s\alpha}\xi^{-3}|\Delta\varphi|^2.
\end{array}
\end{equation}
Combining (\ref{Carlemanvarphi}) and (\ref{Carlemanpsi}), we find
\begin{equation}\label{Carlemanvarpsi}
\begin{array}{l}\displaystyle
s^{-3}\int_Q e^{-2s\alpha}\xi^{-3}|\nabla(\Delta\varphi)|^2+s^{-1}\int_Qe^{-2s\alpha}\xi^{-1}|\Delta\varphi|^2
+s^{-4}\int_Q e^{-2s\alpha}\xi^{-4}|\nabla(\Delta\psi)|^2+s^{-2}\int_Qe^{-2s\alpha}\xi^{-2}|\Delta\psi|^2
\\ \noalign{\medskip}\displaystyle
\hskip3cm\lesssim s^{-1}\int_{Q_1}e^{-2s\alpha}\xi^{-1}|\Delta\varphi|^2
+s^{-2}\int_{Q_1}e^{-2s\alpha}\xi^{-2}|\Delta\psi|^2+B_3+B_4
\\ \noalign{\medskip}\displaystyle
\hskip3cm+s^{-2}\int_Qe^{-2s\alpha}\xi^{-2}(|g_0|^2+|\nabla g_0|^2)
+s^{-3}\int_Qe^{-2s\alpha}\xi^{-3}(|g_1|^2+|\nabla g_1|^2),
\end{array}
\end{equation}
where
$$
B_3 : = s^{-3}\int_{\Sigma}e^{-2s\alpha}\xi^{-3}\left|\frac{\partial}{\partial\nu}\Delta\varphi\right|^2
$$
and
$$
B_4 : = s^{-4}\int_{\Sigma}e^{-2s\alpha}\xi^{-4}\left|\frac{\partial}{\partial\nu}\Delta\psi\right|^2.
$$

Before estimating the local and boundary terms, let us apply the classical Carleman estimate for the Laplace operator with homogeneous
Dirichlet boundary conditions :
$$
s^{-2}\int_Qe^{-2s\alpha}\xi^{-2}|\nabla\nabla\varphi|^2+s^{2}\int_Qe^{-2s\alpha}\xi^{2}|\varphi|^2\lesssim s^{2}\int_{Q_1}e^{-2s\alpha}\xi^{2}|\varphi|^2+s^{-1}\int_Qe^{-2s\alpha}\xi^{-1}|\Delta\varphi|^2.
$$
Combining with (\ref{Carlemanvarpsi}), we deduce :
\begin{equation}\label{Carlemanvarpsi2}
\begin{array}{l}\displaystyle
s^{-3}\int_Q e^{-2s\alpha}\xi^{-3}|\nabla(\Delta\varphi)|^2
+s^{-2}\int_Qe^{-2s\alpha}\xi^{-2}|\nabla\nabla\varphi|^2
+s^{-4}\int_Q e^{-2s\alpha}\xi^{-4}|\nabla(\Delta\psi)|^2+s^{-2}\int_Qe^{-2s\alpha}\xi^{-2}|\Delta\psi|^2
\\ \noalign{\medskip}\displaystyle
+s^{-1}\int_Qe^{-2s\alpha} \xi^{-1}|\Delta\varphi|^2\lesssim s^{2}\int_{Q_1}e^{-2s\alpha}\xi^{2}|\varphi|^2+s^{-1}\int_{Q_1}e^{-2s\alpha}\xi^{-1}|\Delta\varphi|^2
+s^{-2}\int_{Q_1}e^{-2s\alpha}\xi^{-2}|\Delta\psi|^2
\\ \noalign{\medskip}\displaystyle
+B_3+B_4+s^{-2}\int_Qe^{-2s\alpha}\xi^{-2}(|g_0|^2+|\nabla g_0|^2)
+s^{-3}\int_Qe^{-2s\alpha}\xi^{-3}(|g_1|^2+|\nabla g_1|^2).
\end{array}
\end{equation}


Let us now estimate the local and boundary terms in the right-hand side of (\ref{Carlemanvarpsi2}).

\subsubsection{Estimate of the local terms in (\ref{Carlemanvarpsi2})}

In this paragraph, we estimate the second and third terms in the right-hand side of (\ref{Carlemanvarpsi2}).

Regarding the term in $\psi$, we apply the curl operator to the equation satisfied by $\varphi$. This gives :
\begin{equation}\label{betis}
\Delta\psi = -(\nabla\times\varphi)_t-\Delta(\nabla\times\varphi)-\nabla\times g_0\hbox{ in }Q_1.
\end{equation}
Let  $\eta_1$ be a positive function satisfying
$$
\eta_1\in C^2_c(\Omega_2),\,\eta_1(x)=1\,\,\,\forall x\in \Omega_1.
$$
Using (\ref{betis}), we get the following splitting :
$$
s^{-2}\int_{Q_1}e^{-2s\alpha}\xi^{-2}|\Delta\psi|^2\leq s^{-2}\int_{Q}\eta_1e^{-2s\alpha}\xi^{-2}(\Delta\psi)(-(\nabla\times\varphi)_t-\Delta(\nabla\times\varphi)-\nabla\times g_0):=I_1+I_2+I_3.
$$

\paragraph{Estimate of $I_1$.} We integrate by parts with respect to $t$ to get
$$
I_1 = s^{-2}\int_Q\eta_1(e^{-2s\alpha}\xi^{-2})_t\Delta\psi\,\nabla\times\varphi + s^{-2}\int_Q\eta_1\,e^{-2s\alpha}\xi^{-2}\Delta\psi_t\,\nabla\times\varphi :=I_{1,1}+I_{1,2}.
$$
For the first term, using Young's inequality, we get
$$
|I_{1,1}|\lesssim s^{-1}\int_{Q_2}e^{-2s\alpha}\xi^{-1+1/m}|\Delta\psi||\nabla\times\varphi|\leq \varepsilon s^{-2}\int_Qe^{-2s\alpha}\xi^{-2}|\Delta\psi|^2+C\int_{Q_2}e^{-2s\alpha}\xi^{2/m}|\nabla\times\varphi|^2
$$
For the second term, we integrate by parts in $x$ and we obtain :
$$
\begin{array}{l}\displaystyle
|I_{1,2}|
\lesssim s^{-2}\int_{Q_2}|\nabla(e^{-2s\alpha}\xi^{-2})||\nabla\psi_t||\nabla\times\varphi|
+s^{-2}\int_{Q_2}e^{-2s\alpha}\xi^{-2}|\nabla\psi_t||\nabla\times\varphi|
\\ \noalign{\medskip}\displaystyle
\phantom{|I_{1,2}|}+s^{-2}\int_{Q_2}e^{-2s\alpha}\xi^{-2}|\nabla\psi_t||\nabla(\nabla\times\psi)|
\\ \noalign{\medskip}\displaystyle
\phantom{|I_{1,2}|}\lesssim s^{-1}\int_{Q_2}e^{-2s\alpha}\xi^{-1}|\nabla\psi_t||\nabla\times\varphi|
+s^{-2}\int_{Q_2}e^{-2s\alpha}\xi^{-2}|\nabla\psi_t||\nabla(\nabla\times\varphi)|
\\ \noalign{\medskip}\displaystyle
\phantom{|I_{1,2}|}\leq \varepsilon s^{-4}\int_{Q}e^{-2s\alpha}\xi^{-4}|\nabla\psi_t|^2
+Cs^{2}\int_{Q_2}e^{-2s\alpha}\xi^{2}(|\nabla(\nabla\times\varphi)|^2+|\nabla\times\varphi|^2).
\end{array}
$$
Consequently, this first term is estimated as follows :
\begin{equation}\label{Seville}
\begin{array}{l}\displaystyle
|I_1| \leq \varepsilon \left(s^{-2}\int_Qe^{-2s\alpha}\xi^{-2}|\Delta\psi|^2+s^{-4}\int_{Q}e^{-2s\alpha}\xi^{-4}|\nabla\psi_t|^2\right)
\\ \noalign{\medskip}\displaystyle
\phantom{|I_1|}
+Cs^2\int_{Q_2}e^{-2s\alpha}\xi^2(|\nabla\times\varphi|^2+|\nabla(\nabla\times\varphi)|^2).
\end{array}
\end{equation}

\paragraph{Estimate of $I_2$.} We integrate by parts with respect to $x$. We obtain
\begin{equation}\label{Benfica}
\begin{array}{l}\displaystyle
|I_2|\lesssim s^{-1}\int_Qe^{-2s\alpha}\xi^{-1}|\Delta\psi||\Delta\varphi|+s^{-2}\int_Q e^{-2s\alpha}\xi^{-2}|\nabla\times\Delta\psi||\Delta\varphi|
\\ \noalign{\medskip}\displaystyle
\phantom{|I_2|}
\leq \varepsilon\left(s^{-2}\int_Qe^{-2s\alpha}\xi^{-2}|\Delta\psi|^2+s^{-4}\int_{Q}e^{-2s\alpha}\xi^{-4}|\nabla\Delta\psi|^2\right)
+C\int_Qe^{-2s\alpha}|\Delta\varphi|^2.
\end{array}
\end{equation}

\paragraph{Estimate of $I_3$.} Using Young's inequality, we get
$$
\begin{array}{l}\displaystyle
|I_3|
\leq \varepsilon s^{-2}\int_Qe^{-2s\alpha}\xi^{-2}|\Delta\psi|^2+Cs^{-2}\int_{Q}e^{-2s\alpha}\xi^{-2}|\nabla\times g_0|^2.
\end{array}
$$
Putting this last inequality together with (\ref{Seville})-(\ref{Benfica}), we obtain
\begin{equation}\label{Atletico}
\begin{array}{r}\displaystyle
s^{-2}\int_{Q_1}e^{-2s\alpha}\xi^{-2}|\Delta\psi|^2\leq \varepsilon \left(s^{-2}\int_Qe^{-2s\alpha}\xi^{-2}|\Delta\psi|^2+s^{-4}\int_{Q}e^{-2s\alpha}\xi^{-4}(|\nabla\psi_t|^2+|\nabla\Delta\psi|^2)\right)
\\ \noalign{\medskip}\displaystyle
\hskip3cm+C\left(s^2\int_{Q_2}e^{-2s\alpha}\xi^2(|\nabla\times\varphi|^2+|\nabla(\nabla\times\varphi)|^2)+s^{-2}\int_{Q}e^{-2s\alpha}\xi^{-2}|\nabla g_0|^2\right).
\end{array}
\end{equation}
Using now the relation
 $$
 \nabla\psi_t = -\nabla\Delta\psi-\nabla(\nabla\times\varphi)-\nabla g_1 \hbox{ in }Q,
 $$
 the term $\displaystyle  s^{-4}\int_{Q}e^{-2s\alpha}\xi^{-4}|\nabla\psi_t|^2
 $ is bounded by the left-hand side of (\ref{Carlemanvarpsi2}). This allows to deduce
\begin{equation}\label{Carlemanvarpsi3}
\begin{array}{l}\displaystyle
s^{-3}\int_Q e^{-2s\alpha}\xi^{-3}|\nabla(\Delta\varphi)|^2
+s^{-2}\int_Qe^{-2s\alpha}\xi^{-2}|\nabla\nabla\varphi|^2
+s^{-4}\int_Q e^{-2s\alpha}\xi^{-4}|\nabla(\Delta\psi)|^2
\\ \noalign{\medskip}\displaystyle
+s^{-2}\int_Qe^{-2s\alpha}\xi^{-2}|\Delta\psi|^2+s^{-1}\int_Qe^{-2s\alpha} \xi^{-1}|\Delta\varphi|^2
+s^{-4}\int_{Q}e^{-2s\alpha}\xi^{-4}|\nabla\psi_t|^2+s^2\int_Qe^{-2\alpha}\xi^2|\varphi|^2
\\ \noalign{\medskip}\displaystyle
\lesssim s^{2}\int_{Q_2}e^{-2s\alpha}\xi^{2}(|\nabla(\nabla\times\varphi)|^2+|\nabla\times\varphi|^2+|\varphi|^2)
+B_3+B_4
\\ \noalign{\medskip}\displaystyle
+s^{-2}\int_Qe^{-2s\alpha}\xi^{-2}(|g_0|^2+|\nabla g_0|^2)
+s^{-3}\int_Qe^{-2s\alpha}\xi^{-3}(|g_1|^2+|\nabla g_1|^2).
\end{array}
\end{equation}

\paragraph{Estimate of the local term in $\varphi$.}

Let $\Omega_3$ be an open set such that $\overline{\Omega}_3\subset {\cal O} $ and $\overline{\Omega}_2 \subset \Omega_3$. After several integrations by parts, we get
\begin{equation}\label{localvarphi}
\begin{array}{l}\displaystyle
s^2\int_{Q_2}e^{-2s\alpha}\xi^2|\nabla(\nabla\times\varphi)|^2\leq \varepsilon s^{-3}\int_Qe^{-2s\alpha} \xi^{-3}|\nabla\Delta\varphi|^2+Cs^7\int_{Q_3}e^{-2s\alpha}\xi^7|\nabla\varphi|^2
\\ \noalign{\medskip}\displaystyle
\phantom{s^2\int_{Q_2}e^{-2s\alpha}\xi^2|\nabla(\nabla\times\varphi)|^2}
\leq \varepsilon \left(s^{-3}\int_Qe^{-2s\alpha} \xi^{-3}|\nabla\Delta\varphi|^2
+s^{-1}\int_Qe^{-2s\alpha} \xi^{-1}|\Delta\varphi|^2\right)
\\ \noalign{\medskip}\displaystyle
\phantom{s^2\int_{Q_2}e^{-2s\alpha}\xi^2|\nabla(\nabla\times\varphi)|^2}
+Cs^{15}\int_{Q_{\cal O}}e^{-2s\alpha}\xi^{15}|\varphi|^2.
\end{array}
\end{equation}
Putting this together with (\ref{Carlemanvarpsi3}), we deduce :
\begin{equation}\label{Carlemanvarpsi4}
\begin{array}{l}\displaystyle
s^2\int_Qe^{-2\alpha}\xi^2|\varphi|^2+s^{-1}\int_Qe^{-2s\alpha}\xi^{-1}|\Delta\varphi|^2
+s^{-2}\int_Qe^{-2s\alpha}\xi^{-2}|\Delta\psi|^2
+s^{-4}\int_{Q}e^{-2s\alpha}\xi^{-4}|\nabla\psi_t|^2
\\ \noalign{\medskip}\displaystyle
\lesssim B_3+B_4+s^{15}\int_{Q_{\cal O}}e^{-2s\alpha}\xi^{15}|\varphi|^2+s^{-2}\int_Qe^{-2s\alpha}\xi^{-2}(|g_0|^2+|\nabla g_0|^2)
\\ \noalign{\medskip}\displaystyle

+s^{-3}\int_Qe^{-2s\alpha}\xi^{-3}(|g_1|^2+|\nabla g_1|^2).
\end{array}
\end{equation}

Let us now prove that the term
$$
J_0:=s^{-1}\int_Qe^{-2s\alpha^*}(\xi^*)^{-1}|P_1\psi|^2
$$
is estimated by the left-hand side of (\ref{Carlemanvarpsi4}). For this, we use the equation satisfied by $\varphi$:
$$
J_0=s^{-1}\int_Qe^{-2s\alpha^*}(\xi^*)^{-1}(P_1\psi)\cdot(-\varphi_t-\Delta\varphi+\nabla\pi-g_0):=J_1+J_2+J_3+J_4.
$$
Observe that $J_3=0$ since $\psi=0$ on $\Sigma$. For the first term, we integrate by parts in time :

\begin{align*}
J_1&=s^{-1}\int_Qe^{-2s\alpha^*}(\xi^*)^{-1}(P_1\psi_t)\cdot\varphi+s^{-1}\int_Q(e^{-2s\alpha^*}(\xi^*)^{-1})'(P_1\psi)\cdot\varphi
\\
&\leq \frac{1}{4}J_0+2\left(s^{-4}\int_{Q}e^{-2s\alpha}\xi^{-4}|\nabla\psi_t|^2
+s^2\int_Qe^{-2\alpha}\xi^2|\varphi|^2
\right),
\end{align*}
for $m\geq 2$ and $s\gtrsim 1$. For the second and fourth terms, we have
$$
\begin{array}{l}\displaystyle
J_2+J_4
\leq \frac{1}{4}J_0+2\left(s^{-1}\int_{Q}e^{-2s\alpha}\xi^{-1}|\Delta\varphi|^2+s^{-1}\int_{Q}e^{-2s\alpha}\xi^{-1}|g_0|^2\right).
\end{array}
$$

Consequently, coming back to (\ref{Carlemanvarpsi4}), we obtain
\begin{equation}\label{Carlemanvarpsi5}
\begin{array}{l}\displaystyle
s^2\int_Qe^{-2\alpha}\xi^2|\varphi|^2+s^{-1}\int_Qe^{-2s\alpha}\xi^{-1}|\Delta\varphi|^2
+s^{-2}\int_Qe^{-2s\alpha}\xi^{-2}|\Delta\psi|^2
\\ \noalign{\medskip}\displaystyle
+s^{-1}\int_Qe^{-2s\alpha^*}(\xi^*)^{-1}|P_1\psi|^2
+s^{-4}\int_{Q}e^{-2s\alpha}\xi^{-4}|\nabla\psi_t|^2\lesssim B_3+B_4
\\ \noalign{\medskip}\displaystyle
+s^{15}\int_{Q_{\cal O}}e^{-2s\alpha}\xi^{15}|\varphi|^2
+s^{-1}\int_Qe^{-2s\alpha}\xi^{-1}(|g_0|^2+|\nabla g_0|^2)
+s^{-2}\int_Qe^{-2s\alpha}\xi^{-2}(|g_1|^2+|\nabla g_1|^2).
\end{array}
\end{equation}

\subsubsection{Further estimates on $\varphi$ and $\psi$}

Let $\theta_0(t):=s^{-3/2-1/m}e^{-s\alpha^*(t)}(\xi^*(t))^{-3/2-2/m}$. Then,
$$
(\varphi^*,\pi^*)(t,x)=\theta_0(T-t)(\varphi,\pi)(T-t,x)
$$
satisfies system (\ref{Barcelona}) with
$$
h(t,x):=\theta_0(T-t)(P_1\psi)(T-t,x)
$$
and
$$
h_V(t,x):=\theta_0'(T-t)\varphi(T-t,x)+\theta_0(T-t)g_0(T-t,x).
$$
Using Lemma~\ref{estimationL} (c), we have
$$
\|\varphi^*\|^2_{L^2(0,T;H^4(\Omega))\cap H^1(0,T;H^2(\Omega))}\lesssim \|h\|^2_{L^2(0,T;H^2(\Omega))\cap H^1(0,T;L^2(\Omega))}+\|h_V\|^2_{L^2(0,T;H^2(\Omega))}.
$$
Regarding the first term, one has
$$
\|h\|^2_{L^2(0,T;H^2(\Omega))\cap H^1(0,T;L^2(\Omega))}\lesssim \|\theta_0\psi\|^2_{L^2(0,T;H^3(\Omega))}+\|\theta_0'P_1\psi\|^2_{L^2(Q)}+
\|\theta_0P_1\psi_t\|^2_{L^2(Q)}.
$$
Regarding the second term, we deduce
$$
\|h_V\|^2_{L^2(0,T;H^2(\Omega))}\lesssim \|\theta_0'\Delta\varphi\|^2_{L^2(Q)}+\|\theta_0\Delta g_0\|^2_{L^2(Q)}.
$$
Consequently, we infer
\begin{equation}\label{Steaua}
\begin{array}{l}\displaystyle
\|\varphi^*\|^2_{L^2(0,T;H^4(\Omega))\cap H^1(0,T;H^2(\Omega))}
\\ \noalign{\medskip}\displaystyle
\lesssim
\|\theta_0\psi\|^2_{L^2(0,T;H^3(\Omega))}+\|\theta_0'P_1\psi\|^2_{L^2(Q)}+
\|\theta_0P_1\psi_t\|^2_{L^2(Q)}
+ \|\theta_0'\Delta\varphi\|^2_{L^2(Q)}+\|\theta_0\Delta g_0\|^2_{L^2(Q)}.
\end{array}
\end{equation}
Let $\theta_1(t):=s^{-2-1/m}e^{-s\alpha^*(t)}(\xi^*(t))^{-2-1/m}$. The function
$$
\psi^*(t,x)=\theta_1(T-t)\psi(T-t,x)
$$
satisfies system (\ref{Espanyol}) with
$$
h(t,x):=\theta_1(T-t)(\nabla\times\varphi)(T-t,x)
$$
and
$$
h_0(t,x):=\theta_1'(T-t)\psi(T-t,x)+\theta_1(T-t)g_1(T-t,x).
$$
Using Lemma~\ref{estimationM} (b), we have
$$
\|\psi^*\|^2_{L^2(0,T;H^4(\Omega))}\lesssim \|h\|^2_{L^2(0,T;H^2(\Omega))\cap H^1(0,T;L^2(\Omega))}+\|h_0\|^2_{L^2(0,T;H^2(\Omega))}.
$$
Regarding the first term, one has
$$
\|h\|^2_{L^2(0,T;H^2(\Omega))\cap H^1(0,T;L^2(\Omega))}\lesssim \|\theta_1\varphi\|^2_{L^2(0,T;H^3(\Omega))}+\|\theta_1'\nabla\times\varphi\|^2_{L^2(Q)}+
\|\theta_1\nabla\times\varphi_t\|^2_{L^2(Q)}.
$$
Regarding the second term, we deduce
$$
\|h_0\|^2_{L^2(0,T;H^2(\Omega))}\lesssim \|\theta_1'\Delta\psi\|^2_{L^2(Q)}+\|\theta_1\Delta g_1\|^2_{L^2(Q)}.
$$
Consequently, we infer
$$
\|\psi^*\|^2_{L^2(0,T;H^4(\Omega))}\lesssim
\|\theta_1\varphi\|^2_{L^2(0,T;H^3(\Omega))}+\|\theta_1'\nabla\times\varphi\|^2_{L^2(Q)}+
\|\theta_1\nabla\times\varphi_t\|^2_{L^2(Q)}
+ \|\theta_1'\Delta\psi\|^2_{L^2(Q)}+\|\theta_1\Delta g_1\|^2_{L^2(Q)}.
$$
Putting this together with (\ref{Steaua}), we deduce
\begin{equation}\label{Nadal}
\begin{array}{l}\displaystyle
\|\varphi^*\|^2_{L^2(0,T;H^4(\Omega))\cap H^1(0,T;H^2(\Omega))}
+\|\psi^*\|^2_{L^2(0,T;H^4(\Omega))\cap H^1(0,T;H^2(\Omega))}
\\ \noalign{\medskip}\displaystyle
\lesssim
\|\theta_0\psi\|^2_{L^2(0,T;H^3(\Omega))}+\|\theta_0'P_1\psi\|^2_{L^2(Q)}+
\|\theta_0P_1\psi_t\|^2_{L^2(Q)}
+ \|\theta_0'\Delta\varphi\|^2_{L^2(Q)}+\|\theta_0\Delta g_0\|^2_{L^2(Q)}
\\ \noalign{\medskip}\displaystyle
+\|\theta_1\varphi\|^2_{L^2(0,T;H^3(\Omega))}+\|\theta_1'\nabla\times\varphi\|^2_{L^2(Q)}+
\|\theta_1\nabla\times\varphi_t\|^2_{L^2(Q)}
+ \|\theta_1'\Delta\psi\|^2_{L^2(Q)}+\|\theta_1\Delta g_1\|^2_{L^2(Q)}.
\end{array}
\end{equation}
We now estimate the terms in the right-hand side of (\ref{Nadal}) concerning $\varphi$ and $\psi$ with the help of (\ref{Carlemanvarpsi5}).
\begin{itemize}
\item First, we observe that from the definitions of $\theta_0$ and $\theta_1$, the term $\|\theta_1\varphi\|^2_{L^2(0,T;H^3(\Omega))}$ is absorbed by the left-hand side of (\ref{Nadal}).

\item Next, since
$$
|\theta_0'|\lesssim s^{-1/2-1/m}e^{-s\alpha^*}(\xi^*)^{-1/2-1/m},
$$
the terms $\|\theta_0'P_1\psi\|^2_{L^2(Q)}$ and $\|\theta_0'\Delta\varphi\|^2_{L^2(Q)}$ are absorbed by the fourth and second terms in the left-hand side of estimate (\ref{Carlemanvarpsi5}), respectively. Moreover, using that
$$
|\theta_1'|\lesssim s^{-1-1/m}e^{-s\alpha^*}(\xi^*)^{-1},
$$
the terms $\|\theta_1'\nabla\times\varphi\|^2_{L^2(Q)}$ and $\|\theta_1'\Delta\psi\|^2_{L^2(Q)}$ are absorbed by the second and third terms in the left-hand side of estimate (\ref{Carlemanvarpsi5}) respectively, provided that $s$ is large enough.

\item Then, observe that
\begin{equation}\label{Allemagne}
\int_Q\theta_0^2|\Delta\varphi_t|^2\leq 2\left(\|\varphi^*_t\|^2_{L^2(0,T;H^2(\Omega))}+\|\theta_0'\Delta\varphi\|^2_{L^2(Q)}\right)
\end{equation}
so $\|\theta_1\nabla\times\varphi_t\|^2_{L^2(Q)}$ is absorbed by the left-hand sides of (\ref{Carlemanvarpsi5}) and (\ref{Nadal}).

\item Similarly, we have
\begin{equation}\label{Argentine}
\int_Q\theta_1^2|\Delta\psi_t|^2\leq 2\left(\|\psi^*_t\|^2_{L^2(0,T;H^2(\Omega))}+\|\theta_1'\Delta\psi\|^2_{L^2(Q)}\right).
\end{equation}
Moreover, using the equation of $\psi$, we get that
$$
s^{-2}\int_Q e^{-2s\alpha^*}(\xi^*)^{-2}|\psi_t|^2
$$
is estimated by the left-hand side of (\ref{Carlemanvarpsi5}). Integrating by parts in space, we have that
$$
s^{-3-2/m}\int_Qe^{-2s\alpha^*}(\xi^*)^{-3-4/m}|\nabla\psi_t|^2\lesssim s^{-2}\int_Q e^{-2s\alpha^*}(\xi^*)^{-2}|\psi_t|^2+\int_Q\theta_0^2|\Delta\psi_t|^2,
$$
so that, from (\ref{Argentine}), we can absorb the term $\|\theta_0P_1\psi_t\|^2_{L^2(Q)}$ by the left-hand sides of (\ref{Carlemanvarpsi5}) and (\ref{Nadal}).

\item Finally, from an interpolation argument, we find that
$$
\begin{array}{l}\displaystyle
\|\theta_0\psi\|^2_{L^2(0,T;H^3(\Omega))}=s^{-3-2/m}\int_0^Te^{-2s\alpha^*}(\xi^*)^{-3-4/m}\|\psi\|_{H^3(\Omega)}^2
\\ \noalign{\medskip}\displaystyle
\phantom{\|\theta_0\psi\|^2_{L^2(0,T;H^3(\Omega))}}
\leq \varepsilon\|\theta_1\psi\|^2_{L^2(0,T;H^4(\Omega))}+Cs^{-2-2/m}\int_0^Te^{-2s\alpha^*}(\xi^*)^{-2-6/m}\|\psi\|_{H^2(\Omega)}^2
\end{array}
$$
for some $C>0$ (which might depend on $\varepsilon>0$).
\end{itemize}

We conclude that
\begin{equation}\label{Carlemanvarpsi6}
\begin{array}{l}\displaystyle
s^2\int_Qe^{-2\alpha}\xi^2|\varphi|^2+s^{-1}\int_Qe^{-2s\alpha}\xi^{-1}|\Delta\varphi|^2
+s^{-2}\int_Qe^{-2s\alpha}\xi^{-2}|\Delta\psi|^2
\\ \noalign{\medskip}\displaystyle
+s^{-1}\int_Qe^{-2s\alpha^*}(\xi^*)^{-1}|P_1\psi|^2
+s^{-4}\int_{Q}e^{-2s\alpha}\xi^{-4}|\nabla\psi_t|^2
\\ \noalign{\medskip}\displaystyle
+\|\varphi^*\|^2_{L^2(0,T;H^4(\Omega))\cap H^1(0,T;H^2(\Omega))}
+\|\psi^*\|^2_{L^2(0,T;H^4(\Omega))\cap H^1(0,T;H^2(\Omega))}
\\ \noalign{\medskip}\displaystyle
\lesssim B_3+B_4+s^{15}\int_{Q_{\cal O}}e^{-2s\alpha}\xi^{15}|\varphi|^2
+s^{-1}\int_Qe^{-2s\alpha}\xi^{-1}(|g_0|^2+|\nabla g_0|^2+|\nabla^2g_0|^2)
\\ \noalign{\medskip}\displaystyle
+s^{-2}\int_Qe^{-2s\alpha}\xi^{-2}(|g_1|^2+|\nabla g_1|^2+|\nabla^2g_1|^2).
\end{array}
\end{equation}

\subsubsection{Estimate of the boundary terms in (\ref{Carlemanvarpsi6})}

We first establish a useful trace lemma.
\begin{lemma}\label{Ferrero}
There exists $C>0$ such that
$$
\int_{\partial\Omega}\left|\frac{\partial u}{\partial\nu}\right|^2\leq C\|u\|_{H^2(\Omega)}^{3/2}\|u\|_{L^2(\Omega)}^{1/2},
$$
for all $u\in H^2(\Omega)$.
\end{lemma}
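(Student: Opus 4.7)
The plan is to combine the classical $L^2$ trace inequality with a Gagliardo--Nirenberg interpolation. The key elementary ingredient is that for any $v\in H^1(\Omega)$,
\begin{equation*}
\|v\|_{L^2(\partial\Omega)}^2 \lesssim \|v\|_{L^2(\Omega)}\,\|v\|_{H^1(\Omega)},
\end{equation*}
which in turn is obtained by writing $v^2\,(\tilde\nu\cdot\nu)$ on $\partial\Omega$ as the integral over $\Omega$ of $\nabla\cdot(v^2\tilde\nu)$ for a smooth extension $\tilde\nu$ of $\nu$, and then applying Cauchy--Schwarz (this is standard for regular $\partial\Omega$).

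Next I would apply this estimate not to $u$ itself but to the function $v:=\tilde\nu\cdot\nabla u$, where $\tilde\nu\in C^1(\overline{\Omega})^d$ is any fixed vector field coinciding with $\nu$ on $\partial\Omega$ (which exists since $\partial\Omega$ is regular). Since $u\in H^2(\Omega)$, we have $v\in H^1(\Omega)$ with $\|v\|_{L^2(\Omega)}\lesssim \|u\|_{H^1(\Omega)}$ and $\|v\|_{H^1(\Omega)}\lesssim\|u\|_{H^2(\Omega)}$, and the trace of $v$ on $\partial\Omega$ is exactly $\partial u/\partial\nu$. Consequently,
\begin{equation*}
\int_{\partial\Omega}\left|\frac{\partial u}{\partial\nu}\right|^2 \lesssim \|u\|_{H^1(\Omega)}\,\|u\|_{H^2(\Omega)}.
\end{equation*}

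To finish, I would invoke the standard interpolation inequality $\|u\|_{H^1(\Omega)}^2 \lesssim \|u\|_{L^2(\Omega)}\,\|u\|_{H^2(\Omega)}$ (which follows, for instance, from the Gagliardo--Nirenberg inequality, or from a direct extension/Fourier argument), and substitute it into the previous estimate to obtain
\begin{equation*}
\int_{\partial\Omega}\left|\frac{\partial u}{\partial\nu}\right|^2 \lesssim \|u\|_{L^2(\Omega)}^{1/2}\,\|u\|_{H^2(\Omega)}^{3/2},
\end{equation*}
which is precisely the claimed bound. The argument is essentially mechanical once the right extension $\tilde\nu$ is chosen; the only point requiring minimal care is the smoothness of $\partial\Omega$ needed both for the trace inequality and for the existence of $\tilde\nu$, which is granted by our standing hypothesis on $\Omega$.
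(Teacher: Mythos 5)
Your proof is correct and follows essentially the same route as the paper: both arguments extend the normal field into $\Omega$ (your $\tilde\nu$ plays the role of the paper's $\nabla\kappa$ with $\partial\kappa/\partial\nu=1$, $\kappa=1$ on $\partial\Omega$), integrate by parts to trade the boundary integral of $|\partial u/\partial\nu|^2$ for interior terms controlled by $\|\nabla u\|_{L^2(\Omega)}\|u\|_{H^2(\Omega)}$, and conclude with the interpolation $\|\nabla u\|_{L^2(\Omega)}^2\lesssim\|u\|_{L^2(\Omega)}\|u\|_{H^2(\Omega)}$. The only difference is organizational: you factor the computation through the general trace inequality $\|v\|_{L^2(\partial\Omega)}^2\lesssim\|v\|_{L^2(\Omega)}\|v\|_{H^1(\Omega)}$ applied to $v=\tilde\nu\cdot\nabla u$, whereas the paper pairs $\nabla\kappa\cdot\nabla u$ directly against $\Delta u$ in a single integration by parts.
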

\begin{proof}
Let $\kappa\in C^2(\overline\Omega)$ be a function satisfying
$$
\frac{\partial\kappa}{\partial\nu}=1\quad\hbox{and}\quad \kappa=1\quad\hbox{ on }\partial\Omega.
$$
Integrating by parts, we have
$$
\int_{\Omega}(\nabla\kappa\cdot\nabla u)\Delta u=\int_{\partial\Omega}\left|\frac{\partial u}{\partial\nu}\right|^2-\int_{\Omega}\nabla(\nabla\kappa\cdot\nabla u)\cdot\nabla u.
$$
Using now
$$
\|\nabla u\|^2_{L^2(\Omega)}\lesssim\|u\|_{L^2(\Omega)}\|u\|_{H^2(\Omega)}
$$
along with Cauchy-Schwarz inequality, the proof is complete.
\end{proof}


Using Lemma \ref{Ferrero}, we find that
$$
\begin{array}{l}\displaystyle
B_3:=s^{-3}\int_{\Sigma}e^{-2s\alpha^*}(\xi^*)^{-3}\left|\frac{\partial \Delta\varphi}{\partial\nu}\right|^2
\\ \noalign{\medskip}\displaystyle
\phantom{B_3:}
\leq \varepsilon \|\theta_0\varphi\|^2_{L^2(0,T;H^4(\Omega))}
+C_{\varepsilon} s^{-3+6/m}\int_0^Te^{-2s\alpha^*}(\xi^*)^{-3+12/m}\|\varphi\|^2_{H^2(\Omega)}
\end{array}
$$
and
$$
\begin{array}{l}\displaystyle
B_4:=s^{-4}\int_{\Sigma}e^{-2s\alpha^*}(\xi^*)^{-4}\left|\frac{\partial \Delta\psi}{\partial\nu}\right|^2
\\ \noalign{\medskip}\displaystyle
\phantom{B_4:}
\leq \varepsilon \|\theta_1\psi\|^2_{L^2(0,T;H^4(\Omega))}
+C_{\varepsilon} s^{-4+6/m}\int_0^Te^{-2s\alpha^*}(\xi^*)^{-4+6/m}\|\psi\|^2_{H^2(\Omega)}.
\end{array}
$$
Using that $m\geq 6$, these two terms are absorbed by the left-hand side of (\ref{Carlemanvarpsi6}).



This ends the proof of Proposition \ref{Carleman2}.

\section{Proof of Theorems~\ref{Control3} and \ref{Control2}}


\subsection{Observability inequality and controllability of a linear problem}\label{section2.1}


In this paragraph we prove the null controllability of the following linear system :
\begin{equation}\label{linear}
\left\{\begin{array}{ll}
Ly+\nabla p=P_1\omega+f_0+(3-d)\mathds{1}_{\cal O}u,\quad \nabla\cdot y=0&\hbox{in }Q,
\\ \noalign{\medskip}
M\omega+(y\cdot\nabla)\overline\omega=\nabla\times y+(d-2)\mathds{1}_{\cal O}v+f_1&\hbox{in }Q,
\\ \noalign{\medskip}
y=\omega=0&\hbox{on }\Sigma,
\\ \noalign{\medskip}
y(0,\cdot)=y_0,\quad \omega(0,\cdot)=\omega_0&\hbox{in }\Omega,
\end{array}\right.
\end{equation}
for suitable $f_0$ and $f_1$, $y_0\in V$ and $\omega_0\in H^2(\Omega)\cap H^1_0(\Omega)$. Here, we have denoted
\begin{equation}\label{LM2}
Ly:=y_t-\Delta y\quad\hbox{and}\quad M\omega:=\omega_t-\Delta\omega-(d-2)\nabla(\nabla\cdot\omega).
\end{equation}

Before proving this result we need to prove a new Carleman estimate with weight functions only vanishing at $t=T$. Let
\begin{equation}\label{beta}
\beta(t,x)=\frac{e^{2\lambda\|\eta\|_{\infty}}-e^{\lambda\eta(x)}}{\widetilde\ell(t)^8},\quad\gamma (t,x)=\frac{e^{\lambda\eta(x)}}{\widetilde\ell(t)^8},
\end{equation}
where $\widetilde\ell$ is the $C^{\infty}([0,T])$ function given by
$$
\widetilde\ell(t)=
\left\{\begin{array}{ll}
\ell(T/2)&\hbox{ for }t\in [0,T/2],
\\ \noalign{\medskip}
\ell(t)&\hbox{ for }t\in [T/2,T].
\end{array}\right.
$$

\subsubsection{Three-dimensional case}
We will prove the following result :
\begin{proposition}
Under the same assumptions of Proposition \ref{Carleman3}, there exists $C>0$ such that the solutions of (\ref{adjoint}) satisfy
\begin{equation}\label{CI2}
\begin{array}{l}\displaystyle
\int_Q e^{-2s\beta} \gamma^2 |\psi|^2+\int_Q e^{-2s\beta} |\varphi|^2+\int_{\Omega}|\varphi(0,\cdot)|^2+\int_{\Omega}|\psi(0,\cdot)|^2
\\ \noalign{\medskip}\displaystyle
\hskip2cm\leq C\left(\int_Q e^{-2s \beta} \gamma^{-3}\left( |g_0|^2+|\nabla g_0|^2\right)+ \int_Qe^{-2s\beta} |g_1|^2 + \int_{Q_{\cal O}} e^{-2s\beta}\gamma^{4} |\psi|^2 \right).
\end{array}
\end{equation}
\end{proposition}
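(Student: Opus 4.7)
The plan is to combine the Carleman inequality (\ref{CI}) of Proposition~\ref{Carleman3} with a backward-in-time energy estimate for the adjoint system (\ref{adjoint}), exploiting two features of the modified weights. First, on $[T/2,T]$ one has $\widetilde\ell=\ell$, hence $\beta=\alpha$ and $\gamma=\xi$ there. Second, on $[0,3T/4]$ the function $\widetilde\ell$ stays above $\ell(T/2)>0$, so that, after $s$ and $\lambda$ are fixed, $e^{-2s\beta}$ and $\gamma^{\pm k}$ are bounded above and below by positive constants on this interval.

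I fix $s\geq s_{0}$ and $\lambda$ at values large enough so that Proposition~\ref{Carleman3} applies. Writing $A(x):=e^{2\lambda\|\eta\|_{\infty}}-e^{\lambda\eta(x)}\geq c>0$, the inequality $\widetilde\ell\geq\ell$ together with the monotonicity of $\ell\mapsto e^{-2sA(x)/\ell^{8}}\ell^{p}$ in the range $\ell\leq\ell(T/2)$ (which holds for $s$ large and every fixed integer $p$) yield the pointwise comparisons
\[
e^{-2s\alpha}\xi^{-3}\lesssim e^{-2s\beta}\gamma^{-3},\qquad e^{-2s\alpha}\lesssim e^{-2s\beta},\qquad e^{-2s\alpha}\xi^{4}\lesssim e^{-2s\beta}\gamma^{4}\quad\text{on }Q.
\]
In particular the right-hand side of (\ref{CI}) is dominated by that of (\ref{CI2}). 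Since moreover $\beta=\alpha$ and $\gamma=\xi$ on $[T/2,T]$, this already controls the contribution of $[T/2,T]\times\Omega$ to $\int_{Q}e^{-2s\beta}\gamma^{2}|\psi|^{2}+\int_{Q}e^{-2s\beta}|\varphi|^{2}$, while on $[0,T/2]$ the two-sided boundedness of the weights makes the remaining slice comparable to $\int_{0}^{T/2}(\|\varphi\|^{2}+\|\psi\|^{2})$.

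It therefore remains to estimate $\|\varphi(0)\|^{2}+\|\psi(0)\|^{2}+\int_{0}^{T/2}(\|\varphi\|^{2}+\|\psi\|^{2})$. I do this via the standard energy identity for (\ref{adjoint}): testing the first equation against $\varphi$ and the second against $\psi$, the pressure term drops by $\nabla\cdot\varphi=0$, and the coupling terms $P_{1}\psi$, $\nabla\times\varphi$ and $(\nabla\psi)^{T}\overline\omega$ are absorbed by Young's inequality, using the Sobolev embedding $W^{1,3+\delta}(\Omega)\hookrightarrow L^{\infty}(\Omega)$ for $d=3$ to bound $\overline\omega\in L^{\infty}(Q)$. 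Setting $E(t):=\|\varphi(t)\|^{2}+\|\psi(t)\|^{2}$ and $h(t):=\|g_{0}(t)\|^{2}+\|g_{1}(t)\|^{2}$, one gets $-E'(t)\leq CE(t)+Ch(t)$; a backward Gronwall argument, with the endpoint $s$ averaged over $[T/2,3T/4]$, then gives
\[
\|\varphi(0)\|^{2}+\|\psi(0)\|^{2}+\int_{0}^{T/2}E(t)\,dt\leq C\int_{T/2}^{3T/4}E(s)\,ds+C\int_{0}^{3T/4}h(t)\,dt.
\]

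To conclude, it suffices to observe that the two right-hand-side terms are controlled respectively by (\ref{CI}) and by the source terms of (\ref{CI2}). On $[T/2,3T/4]$ both $e^{-2s\alpha}$ and $\xi$ are bounded above and below by positive constants, so $\int_{T/2}^{3T/4}\|\varphi\|^{2}\lesssim\int_{Q}e^{-2s\alpha}|\varphi|^{2}$ and $\int_{T/2}^{3T/4}\|\psi\|^{2}\lesssim\int_{Q}e^{-2s\alpha}\xi^{2}|\psi|^{2}$; by (\ref{CI}) (and the comparisons above) these are dominated by the right-hand side of (\ref{CI2}). On $[0,3T/4]$ the weights $e^{-2s\beta}\gamma^{-3}$ and $e^{-2s\beta}$ are also bounded below by positive constants, so $\int_{0}^{3T/4}h$ is bounded by the first two source terms on the right-hand side of (\ref{CI2}). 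Assembling these estimates yields (\ref{CI2}). The only mildly technical point is the careful verification, at fixed large $s$, of the pointwise weight comparisons above and of the uniform lower bounds on the chosen subintervals; once those are in hand, the argument is the standard "backward energy estimate plus Carleman inequality localised near $t=T/2$" pattern.
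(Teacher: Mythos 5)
Your proof is correct and follows essentially the same strategy as the paper: the weight comparisons ($\beta=\alpha$ on $[T/2,T]$, $e^{-2s\alpha}\xi^{k}\lesssim e^{-2s\beta}\gamma^{k}$ for large $s$, and the two-sided bounds on $[0,3T/4]$), the Carleman inequality (\ref{CI}) for the final slice, and an energy estimate linking $[0,T/2]$ to the window $[T/2,3T/4]$. The only difference is presentational: the paper truncates with a cutoff $\sigma_2$ and invokes an a priori estimate for the resulting backward system, whereas you derive the same bound by a direct backward Gronwall inequality — these are equivalent.
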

\begin{proof} To prove estimate (\ref{CI2}) we start by observing that, since $\beta=\alpha$ in $(T/2,T)\times\Omega$ and $\beta\leq\alpha$,
\begin{equation}\label{plate}
\begin{array}{l}\displaystyle
\int_{(T/2,T)\times\Omega}e^{-2s\beta} \gamma^2 |\psi|^2+\int_{(T/2,T)\times\Omega} e^{-2s\beta} |\varphi|^2
\leq\int_{Q}e^{-2s\alpha} \xi^2 |\psi|^2+\int_{Q} e^{-2s\alpha} |\varphi|^2
\\ \noalign{\medskip}\displaystyle
\leq C\left(s^{-3} \int_Q e^{-2s \alpha} \xi^{-3}\left( |g_0|^2+|\nabla g_0|^2\right)+ \int_Qe^{-2s\alpha} |g_1|^2 + s^4 \int_{Q_{\cal O}} e^{-2s\alpha}\xi^{4} |\psi|^2 \right)
\\ \noalign{\medskip}\displaystyle
\leq C\left(s^{-3} \int_Q e^{-2s \beta} \gamma^{-3}\left( |g_0|^2+|\nabla g_0|^2\right)+ \int_Qe^{-2s\beta} |g_1|^2 + s^4 \int_{Q_{\cal O}} e^{-2s\beta}\gamma^{4} |\psi|^2 \right).
\end{array}
\end{equation}
Here, we have also used the Carleman inequality (\ref{CI}) and the fact that
$$
e^{-2s\alpha}\xi^4\lesssim e^{-2s\beta}\gamma^4.
$$
In order to perform an estimate on $(T/2,T)\times\Omega$, we introduce $\sigma_2\in C^1([0,T])$ satisfying $\sigma_2(t)=1$ for $t\in [0,T/2]$ and $\sigma_2(t)=0$ for $t\in [3T/4,T]$. Then, $\sigma_2(\varphi,\pi,\psi)$ satisfies
$$
\left\{
\begin{array}{llll}
     -(\sigma_2\varphi)_t-\Delta (\sigma_2\varphi) +\nabla (\sigma_2\pi)&=&\nabla \times (\sigma_2\psi)+(\nabla (\sigma_2\psi))^T  \overline{\omega}+\sigma_2g_0-\sigma_2'\varphi \qquad  &\text{ in } Q,  \\
    -(\sigma_2\psi)_t-\Delta (\sigma_2\psi) - \nabla(\nabla \cdot (\sigma_2\psi))&=& \nabla \times (\sigma_2\varphi) +\sigma_2g_1-\sigma_2'\psi \qquad &\text{ in } Q,\\
 \nabla \cdot (\sigma_2\varphi) &=& 0\qquad &\text{ in } Q,\\
     \sigma_2\varphi &=& 0 \qquad  &\text{ on } \Sigma, \\
     \sigma_2\psi&=&0  \qquad &\text{ on } \Sigma,
     \\
     (\sigma_2\varphi)(T,\cdot)&=&0  \qquad &\text{ in } \Omega,
     \\
     (\sigma_2\psi)(T,\cdot)&=&0  \qquad &\text{ in } \Omega,
\end{array}
\right.
$$
(see  (\ref{adjoint})). For this system, we have (see (\cite{L}))
$$
\int_Q|\sigma_2\varphi|^2+\int_Q|\sigma_2\psi|^2+\int_{\Omega}|\varphi(0,\cdot)|^2+\int_{\Omega}|\psi(0,\cdot)|^2\leq C\left(\int_{(0,3T/4)\times\Omega}(|g_0|^2+|g_1|^2)+\int_{(T/2,3T/4)\times\Omega}(|\varphi|^2+|\psi|^2)\right).
$$
Observing now that $e^{-2s\beta}\geq C$ in $(0,3T/4)\times\Omega$, $e^{-2s\alpha}\geq C$ in $(T/2,3T/4)\times\Omega$ and using again the Carleman inequality (\ref{CI}), we deduce in particular
$$
\begin{array}{l}\displaystyle
\int_{(0,T/2)\times\Omega}e^{-2s\beta}|\varphi|^2+\int_{(0,T/2)\times\Omega}e^{-2s\beta} \gamma^2|\psi|^2+\int_{\Omega}|\varphi(0,\cdot)|^2+\int_{\Omega}|\psi(0,\cdot)|^2
\\ \noalign{\medskip}\displaystyle
 \leq C\left(s^{-3} \int_Q e^{-2s \beta} \gamma^{-3}\left( |g_0|^2+|\nabla g_0|^2\right)+ \int_Qe^{-2s\beta} |g_1|^2 + s^4 \int_{Q_{\cal O}} e^{-2s\beta}\gamma^{4} |\psi|^2 \right).
 \end{array}
$$
Combining this with (\ref{plate}), we deduce the desired inequality (\ref{CI2}).
\end{proof}
\begin{remark}
If we denote
$$
\widehat\beta(t):=\min_{x\in\overline\Omega}\beta(t,x),\quad\widehat\gamma (t):=\min_{x\in\overline\Omega}\gamma(t,x),
$$
then, we deduce from (\ref{CI2}):
\begin{equation}\label{CI3}
\begin{array}{l}\displaystyle
\|\gamma e^{-s\beta}\psi\|^2_{L^2(Q)}+\|e^{-s\beta} \varphi\|^2_{L^2(Q)}+\|\varphi(0,\cdot)\|^2_{L^2(\Omega)}+\|\psi(0,\cdot)\|^2_{L^2(\Omega)}
\\ \noalign{\medskip}\displaystyle
\hskip2cm\leq C(\|e^{-s\widehat\beta} \widehat\gamma^{-3/2}g_0\|^2_{L^2(V)}+
\|e^{-s\beta}g_1\|^2_{L^2(Q)} + \|e^{-s\beta}\gamma^2\psi\|^2_{L^2(Q_{\cal O})} ).
\end{array}
\end{equation}
\end{remark}

\vskip0.7cm Now, we are ready to
solve the null controllability problem for the linear system
(\ref{linear}). For simplicity, we introduce the following
weight functions:
$$
\rho_0(t,x):=e^{s\beta(t,x)},\quad
\rho_1(t,x):=e^{s\beta(t,x)}\gamma(t,x)^{-1},\quad \rho_2(t,x):=e^{s\beta(t,x)}\gamma(t,x)^{-2},\quad \rho_3(t):=e^{s\widehat\beta(t)}\widehat\gamma(t)^{3/2}.
$$

The null controllability of system (\ref{linear}) will be established in some weighted spaces which we present now :
\begin{equation}\label{E1}
E_1:=\{(y,p,v,\omega)
\in E_0 : \rho_0(Ly+\nabla p-\nabla\times\omega)\in
L^2(Q),\,\rho_1(M\omega+(y\cdot\nabla)\overline\omega-\nabla\times y-\mathds{1}_{\cal O}v)\in L^2(Q)\}
\end{equation}
where
$$
\displaystyle E_0=\{(y,p,v,\omega): (\rho_3)^{3/4}y\in L^2(0,T;H^2(\Omega))\cap L^{\infty}(0,T;V),\,(\rho_0)^{3/4} \omega\in L^2(0,T;H^2(\Omega)),\,\rho_2v\in L^2(Q)\}.
$$

Of course, $E_1$ and $E_0$ are Banach spaces for the norms
$$
\|(y,p,v,\omega)\|_{E_0}=(\|(\rho_3)^{3/4}y\|_{L^2(0,T;H^2(\Omega))\cap L^{\infty}(0,T;V)}^2+\|(\rho_0)^{3/4} \omega\|^2_{L^2(0,T;H^2(\Omega))}+\|\rho_2v\|^2_{L^2(Q)})^{1/2}
$$
and
$$
\begin{array}{l}
\|(y,p,v,\omega)\|_{E_1}=(\|(y,p,v,\omega)\|_{E_0}^2+
\|\rho_0(Ly+\nabla p-\nabla\times\omega)\|^2_{L^2(Q)}
\\ \noalign{\medskip}
\phantom{\|(y,p,v,\omega)\|_{E_1}}
+\|\rho_1(M\omega+(y\cdot\nabla)\overline\omega-\nabla\times y-\mathds{1}_{\cal O}v)\|^2_{L^2(Q)})^{1/2}.
\end{array}
$$

Then, we have the following result:
\begin{proposition}\label{prop2}
   Let us assume that $\overline\omega\in L^{\infty}(0,T;W^{1,3+\delta}(\Omega))\cap H^1(0,T;L^3(\Omega))$ for some $\delta>0$, $y_0\in V$, $\omega_0\in H^2(\Omega)\cap H^1_0(\Omega)$,
$\rho_0f_0\in L^2(Q)$ and $\rho_1f_1\in L^2(Q)$. Then, there exists a control $v\in L^2(Q)$ such that,
if $(y,\omega)$ is (together with some $p$) the associated solution
to (\ref{linear}), one has $(y,p,v,\omega)\in E_1$. In particular, $y(T)=\omega(T)=0$ in
$\Omega$.
\end{proposition}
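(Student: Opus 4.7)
The plan is to follow the classical Fursikov--Imanuvilov duality strategy, turning the observability inequality~(\ref{CI3}) into a null controllability result via Riesz's theorem, and then upgrading the resulting bounds to the full $E_1$-regularity by weighted parabolic estimates.

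I first introduce the test space
$$
P_0:=\{(\varphi,\pi,\psi)\in C^2(\overline Q)^3\times C^1(\overline Q)\times C^2(\overline Q)^3:\ \nabla\cdot\varphi=0,\ \varphi_{|\Sigma}=0,\ \psi_{|\Sigma}=0\}
$$
and the adjoint residues
$$
\mathcal{R}_1(U):=-\varphi_t-\Delta\varphi+\nabla\pi-P_1\psi-(\nabla\psi)^T\overline\omega,\quad\mathcal{R}_2(U):=-\psi_t-\Delta\psi-\nabla(\nabla\cdot\psi)-\nabla\times\varphi,
$$
associated to $U=(\varphi,\pi,\psi)\in P_0$. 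On $P_0\times P_0$ I consider a bilinear form $a(\hat U,U)$ comprising three terms---penalising $\mathcal{R}_1$ in a weighted $L^2(0,T;V)$-norm with weight $\rho_3^{-1}$, $\mathcal{R}_2$ in a weighted $L^2(Q)$-norm with weight $\rho_0^{-1}$, and $\psi_{|Q_{\cal O}}$ in a weighted $L^2(Q_{\cal O})$-norm with weight $\rho_2^{-1}$---so as to match exactly the right-hand side of~(\ref{CI3}). Applying that inequality with $g_i:=\mathcal{R}_i(U)$ gives the coercivity
$$
\|\rho_0^{-1}\varphi\|_{L^2(Q)}^2+\|\rho_1^{-1}\psi\|_{L^2(Q)}^2+\|\varphi(0)\|_{L^2(\Omega)}^2+\|\psi(0)\|_{L^2(\Omega)}^2\lesssim a(U,U),
$$
so $a$ is a scalar product on $P_0$; let $P$ denote its Hilbert completion.

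I then introduce the linear form $\Lambda(U):=(y_0,\varphi(0))+(\omega_0,\psi(0))+\int_Qf_0\cdot\varphi+\int_Qf_1\psi$. Under the hypotheses $\rho_0f_0\in L^2(Q)$, $\rho_1f_1\in L^2(Q)$, $y_0\in V$ and $\omega_0\in H^2\cap H^1_0$, the Cauchy--Schwarz inequality combined with the above coercivity shows that $\Lambda$ is continuous on $P$. Riesz's theorem then produces a unique $\hat U=(\hat\varphi,\hat\pi,\hat\psi)\in P$ such that $a(\hat U,U)=\Lambda(U)$ for every $U\in P_0$. Setting
$$
y:=\rho_3^{-2}\mathcal{R}_1(\hat U),\qquad \omega:=\rho_0^{-2}\mathcal{R}_2(\hat U),\qquad v:=-\rho_2^{-2}\hat\psi\,\mathds{1}_{\cal O},
$$
and recovering a pressure $p$ from $\hat\pi$, integration by parts in $t$ and $x$ in the variational identity (the contributions at $t=T$ vanish because $\rho_i^{-1}$ is zero there, while those at $t=0$ pair with the initial data contained in $\Lambda$) shows that $(y,p,\omega,v)$ solves~(\ref{linear}) with $y(0)=y_0$ and $\omega(0)=\omega_0$; the vanishing of the weights at $t=T$ forces $y(T)=\omega(T)=0$. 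By construction, $\rho_3y,\rho_0\omega,\rho_2v\in L^2(Q)$.

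The last step---and the main obstacle---is to upgrade these weighted $L^2$-bounds to the full $E_0$-regularity prescribed in~(\ref{E1}): $\rho_3^{3/4}y\in L^2(0,T;H^2(\Omega))\cap L^\infty(0,T;V)$ and $\rho_0^{3/4}\omega\in L^2(0,T;H^2(\Omega))$. I would apply the maximal parabolic regularity of Lemmata~\ref{estimationM} and~\ref{estimationL} to the Stokes and heat systems satisfied by the rescaled unknowns $\rho_3^{3/4}y$ and $\rho_0^{3/4}\omega$, so that the time-derivatives of the weights (which are fixed multiples of $\gamma^{1/m}$ times the weights themselves) act as lower-order perturbations absorbable by the bounds already obtained. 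The principal difficulty is the coupling term $(y\cdot\nabla)\overline\omega$ appearing on the right-hand side of the equation for $\omega$: in dimension three, H\"older's inequality combined with the Sobolev embedding $V\hookrightarrow L^p(\Omega)$ for some $p<6$ close to~$6$ yields
$$
\|(y\cdot\nabla)\overline\omega\|_{L^2(\Omega)}\lesssim\|\overline\omega\|_{W^{1,3+\delta}(\Omega)}\|y\|_{V},
$$
which is precisely why the hypothesis $\overline\omega\in L^\infty(0,T;W^{1,3+\delta}(\Omega))$ appears; the companion assumption $\overline\omega\in H^1(0,T;L^3(\Omega))$ plays the analogous role for the time derivative of this coupling, as needed by the maximal regularity bound for the Stokes system. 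Careful bookkeeping of the weight ratios---all of which are fixed powers of $\gamma$---then closes the estimates and gives $(y,p,v,\omega)\in E_1$.
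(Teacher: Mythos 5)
Your proposal follows essentially the same route as the paper: the Fursikov--Imanuvilov penalized duality argument built on the observability inequality (\ref{CI3}), Lax--Milgram on the completion of a smooth test space, recovery of the state and control from the minimizer, and a final weighted regularity bootstrap (which the paper delegates to the proof of Proposition 4.3 in \cite{CG}). One bookkeeping remark: since your bilinear form penalizes $\mathcal{R}_1$ in the $V$-norm, the state must be read off from the variational identity as $y=\rho_3^{-2}(I-\Delta)\mathcal{R}_1(\hat U)$, a priori only in a weighted $L^2(0,T;V')$ space (exactly as in the paper's definition of $\hat y$), rather than as $\rho_3^{-2}\mathcal{R}_1(\hat U)$.
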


\begin{proof}
Let us introduce the space
$$
\begin{array}{l}
P_0=\{(\varphi,\pi,\psi)\in C^3(\overline Q):\nabla\cdot\varphi=0 \hbox{ in } Q,\,\varphi=\psi=0 \hbox{ on } \Sigma,
\\ \noalign{\medskip}
(L^*\varphi+\nabla\pi-\nabla \times \psi-(\nabla \psi)^T \overline{\omega})=0\hbox{ on } \Sigma,\,
\nabla\cdot(L^*\varphi+\nabla\pi-\nabla \times \psi-(\nabla \psi)^T \overline{\omega})=0\hbox{ in } Q\}
\end{array}
$$
and consider the bilinear form
\begin{equation}\label{a}
\begin{array}{l}\displaystyle
a_0((\widehat \varphi,\widehat \pi,\widehat\psi),(\varphi,\pi,\psi)):=\int_Q(\rho_0)^{-2}(M^*\widehat\psi-\nabla\times\widehat\varphi)\cdot(M^*\psi-\nabla\times\varphi)
+\int_{\cal O}(\rho_2)^{-2}\widehat\psi\cdot\psi
\\ \noalign{\medskip}\displaystyle
+\int_Q(\rho_3)^{-2}(L^*\widehat\varphi+\nabla\widehat\pi-\nabla \times\widehat \psi-(\nabla \widehat\psi)^T\overline\omega) \cdot (L^*\varphi+\nabla\pi-\nabla \times \psi-(\nabla \psi)^T \overline{\omega})
\\ \noalign{\medskip}\displaystyle
+\int_Q(\rho_3)^{-2}\nabla(L^*\widehat\varphi+\nabla\widehat\pi-\nabla \times\widehat \psi-(\nabla \widehat\psi)^T\overline\omega): \nabla(L^*\varphi+\nabla\pi-\nabla \times \psi-(\nabla \psi)^T \overline{\omega}).
\end{array}
\end{equation}
Here, $L^*$ and $M^*$ denote the formal adjoint operators of $L$ and $M$ respectively :
$$
L^*\varphi:=-\varphi_t-\Delta \varphi\quad\hbox{and}\quad M^*\psi:=-\psi_t-\Delta\psi-(d-2)\nabla(\nabla\cdot\psi).
$$

From the Carleman inequality (\ref{CI3}), this bilinear form is an inner product in $P_0$. We consider the Hilbert space resulting of the completion of $P_0$ with $a(\cdot,\cdot)$ and we call it $P$.

We introduce now the linear form $b_0:P\rightarrow \mathbb{R}$:
$$
b_0(\varphi,\pi,\psi):= \int_Q f_0\cdot\varphi+\int_Q f_1\cdot\psi+\int_{\Omega}\varphi(0,x)\cdot y_0(x)
+\int_{\Omega}\psi(0,x)\cdot\omega_0(x).
$$
Then, in virtue of the Carleman inequality (\ref{CI3}) this linear form is continuous. Consequently, from the Lax-Milgram's Lemma there exists a unique solution $(\widehat \varphi,\widehat \pi,\widehat\psi)\in P$ of
\begin{equation}\label{LM}
a_0((\widehat \varphi,\widehat \pi,\widehat\psi),(\varphi,\pi,\psi))=b_0(\varphi,\pi,\psi)   \quad\forall (\varphi,\pi,\psi)\in P.
\end{equation}

Let us now define the following quantities :
$$
\widehat y:=(\rho_3)^{-2}[(L^*\widehat\varphi+\nabla\widehat\pi-\nabla \times\widehat \psi-(\nabla \widehat\psi)^T\overline\omega)
-\Delta(L^*\widehat\varphi+\nabla\widehat\pi-\nabla \times\widehat \psi-(\nabla \widehat\psi)^T\overline\omega)],
$$
$$
\widehat\omega:=(\rho_0)^{-2}(M^*\widehat\psi-\nabla\times\widehat\varphi)
$$
and
$$
\widehat v:=-(\rho_2)^{-2}\widehat\psi.
$$
Then, from (\ref{a}) we readily have
$$
\|\rho_3\widehat y\|_{L^2(V')}+\|\rho_0\widehat\omega\|_{L^2(Q)}+\|\rho_2\widehat v\|_{L^2(Q)}=a_0((\widehat \varphi,\widehat \pi,\widehat\psi),
(\widehat \varphi,\widehat \pi,\widehat\psi))<+\infty.
$$

We consider now the weak solution $(\tilde y,\tilde p,\tilde\omega)$ of system (\ref{linear}) with $v:=\widehat v$ and its adjoint system
\begin{equation}\label{adjoint2}
\left\{
\begin{array}{llll}
     L^* \varphi +\nabla \pi&=&\nabla \times \psi+(\nabla \psi)^T  \overline{\omega}+g_0 \qquad  &\text{ in } Q,  \\
     M^* \psi&=& \nabla \times \varphi +g_1 \qquad &\text{ in } Q,\\
 \nabla \cdot \varphi &=& 0\qquad &\text{ in } Q,\\
     \varphi &=& 0 \qquad  &\text{ on } \Sigma, \\
     \psi&=&0  \qquad &\text{ on } \Sigma,\\
     \varphi(T,\cdot) &=& 0 \qquad  &\text{ in } \Omega, \\
     \psi(T,\cdot)&=&0  \qquad &\text{ in } \Omega.
\end{array}
\right.
\end{equation}
We multiply the equation of $\tilde y$ by $\varphi$, the equation of $\tilde\omega$ by $\psi$ and we integrate by parts. We obtain
$$
\begin{array}{l}\displaystyle
\int_Q\tilde y\cdot g_0+\int_Q\tilde\omega\cdot g_1
=\int_Qf_0\cdot\varphi
+\int_Q(f_1+\mathds{1}_{\cal O}\widehat v)\cdot\psi+\int_{\Omega}y_0\cdot\varphi_{|t=0}+\int_{\Omega}\omega_0\cdot\psi_{|t=0},
\end{array}
$$
for all $g_0,\,g_1\in L^2(Q)$. That is to say, $(\tilde y,\tilde p,\tilde\omega)$ is also the solution by transposition of (\ref{linear}) with $v=\widehat v$.

Then, from (\ref{LM}), it is not difficult to see that $(\widehat y,\widehat p,\widehat\omega)=(\tilde y,\tilde p,\tilde\omega)$. Moreover, one can perform
regularity estimates for our system in order to prove that the weak solution of (\ref{linear}) with $v=\widehat v$ satisfies $(\tilde y,\tilde p,\widehat v,\tilde\omega)\in E_1$. For all the details, one can see for instance the proof of Proposition 4.3 in \cite{CG}.
\end{proof}

\subsubsection{Two-dimensional case}



In this case, we can prove the following result :
\begin{proposition}
Let $d=2$, $m=8$ and $T>0$. Then, under the same assumptions of Proposition \ref{Carleman2} there exists $C>0$ such that the solutions of (\ref{adjoint}) satisfy
\begin{equation}\label{CI4}
\begin{array}{l}
\| e^{-s\beta^*}(\gamma^*)^{-1/2}\varphi\|^2_{L^2(0,T;H^2(\Omega))}+\|e^{-s\beta^*}(\gamma^*)^{-1} \psi\|^2_{L^2(0,T;H^2(\Omega))}+\|\varphi(0,\cdot)\|^2_{L^2(\Omega)}+\|\psi(0,\cdot)\|^2_{L^2(\Omega)}
\\ \noalign{\medskip}\displaystyle
\hskip2cm\leq C(\|e^{-s\widehat\beta} \widehat\gamma^{-1}g_0\|^2_{L^2(0,T;H^2(\Omega))}+
\|e^{-s\widehat\beta} \widehat\gamma^{-1}g_1\|^2_{L^2(0,T;H^2(\Omega))}+  \|e^{-s\beta}\gamma^{15/2}\psi\|^2_{L^2(Q_{\cal O})} ),
\end{array}
\end{equation}
where we have denoted
$$
\beta^*(t):=\max_{x\in\overline\Omega}\beta(t,x),\quad\gamma^* (t):=\max_{x\in\overline\Omega}\gamma(t,x),
$$
and the other weights were defined at the beginning of paragraph~$\ref{section2.1}$.
\end{proposition}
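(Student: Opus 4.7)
The proof will closely parallel that of the three-dimensional case: split $[0,T]$ into $[T/2,T]$ and $[0,T/2]$, applying Proposition~\ref{Carleman2} directly on the first interval and a cut-off plus maximal parabolic regularity argument on the second.

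On $[T/2,T]$ one has $\beta\equiv\alpha$ and $\gamma\equiv\xi$. Elliptic regularity for the Dirichlet Laplacian, namely $\|u\|_{H^2(\Omega)}^2\lesssim\|\Delta u\|_{L^2(\Omega)}^2$ when $u|_{\partial\Omega}=0$, converts the $\Delta\varphi,\Delta\psi$ terms on the LHS of (\ref{CI9}) into the full $H^2$-norms appearing on the LHS of (\ref{CI4}). Since $\beta^*(t)\geq\beta(t,x)$ and $\gamma^*(t)\geq\gamma(t,x)$ pointwise, we have $e^{-2s\beta^*(t)}(\gamma^*(t))^{-1}\leq e^{-2s\beta(t,x)}\gamma(t,x)^{-1}$, so the time-only LHS weights of (\ref{CI4}) restricted to $[T/2,T]$ are dominated by the space-time LHS weights of (\ref{CI9}). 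Symmetrically, $\widehat\beta\leq\beta$ and $\widehat\gamma\leq\gamma$ yield $s^{-2}e^{-2s\alpha}\xi^{-2}\lesssim e^{-2s\widehat\beta}\widehat\gamma^{-2}$ for $s$ large, so the $g_0$- and $g_1$-source terms on the RHS of (\ref{CI9}) are absorbed into the first two summands on the RHS of (\ref{CI4}).

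On $[0,T/2]$, choose $\sigma_2\in C^1([0,T])$ with $\sigma_2\equiv 1$ on $[0,T/2]$ and $\sigma_2\equiv 0$ on $[3T/4,T]$. Then $(\sigma_2\varphi,\sigma_2\pi,\sigma_2\psi)$ solves the same adjoint Stokes/heat system with zero terminal data and modified sources $\sigma_2 g_0-\sigma_2'\varphi$ and $\sigma_2 g_1-\sigma_2'\psi$, the cut-off derivative $\sigma_2'$ being supported in $[T/2,3T/4]$. Applying the maximal $L^2(H^2)\cap H^1(L^2)$ parabolic regularity of Lemmata~\ref{estimationL} and~\ref{estimationM} (together with the accompanying energy bound yielding the initial values) produces
\[
\begin{array}{l}
\|\sigma_2\varphi\|_{L^2(0,3T/4;H^2)}^2+\|\sigma_2\psi\|_{L^2(0,3T/4;H^2)}^2+\|\varphi(0,\cdot)\|_{L^2(\Omega)}^2+\|\psi(0,\cdot)\|_{L^2(\Omega)}^2
\\ \noalign{\medskip}
\qquad\lesssim \|\sigma_2 g_0\|_{L^2(0,3T/4;H^2)}^2+\|\sigma_2 g_1\|_{L^2(0,3T/4;H^2)}^2+\|\varphi\|_{L^2(T/2,3T/4;H^2)}^2+\|\psi\|_{L^2(T/2,3T/4;H^2)}^2.
\end{array}
\]
Because $\widetilde\ell$ is constant on $[0,T/2]$, the weights $e^{-s\widehat\beta},\widehat\gamma^{\pm 1},e^{-s\beta^*},(\gamma^*)^{\pm 1}$ are uniformly comparable to $1$ on $[0,3T/4]$, so they can be inserted freely. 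The middle-interval remainders in $\varphi,\psi$ are then absorbed via Step~1 applied on $[T/2,3T/4]$, where $e^{-2s\alpha}$ and $\xi^{\pm 1}$ are bounded below. Combining the two steps yields (\ref{CI4}).

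The principal technical obstacle is securing the full $L^2(H^2)$ regularity of $(\sigma_2\varphi,\sigma_2\psi)$: the perturbed system's source contains the cross terms $P_1(\sigma_2\psi)$ and $\nabla\times(\sigma_2\varphi)$, which a priori sit only in $L^2(H^1)$. The parabolic regularity lemmata must therefore be applied iteratively, first at the energy level to reach $L^2(H^1)\cap L^\infty(L^2)$ bounds and then, using the $L^2(H^2)$ assumption on $g_0,g_1$, lifting to $L^2(H^2)$, all while carefully tracking the support of $\sigma_2'$ on $[T/2,3T/4]$ so that the resulting middle-interval terms are precisely what Step~1 reabsorbs through Proposition~\ref{Carleman2}.
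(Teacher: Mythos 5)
Your argument is correct and is essentially the paper's: the authors merely state that the proof ``follows from the Carleman estimate stated in Proposition~\ref{Carleman2},'' but the intended argument is exactly the two-interval decomposition you describe --- apply \eqref{CI9} on $(T/2,T)$ where $\beta=\alpha$, $\gamma=\xi$ (using elliptic regularity and the time-only weights $\beta^*,\gamma^*$ to recover the full $H^2$ norms), then a cut-off $\sigma_2$ plus energy/maximal-regularity estimates on $(0,3T/4)$ with the $\sigma_2'$-terms reabsorbed via the Carleman estimate on $(T/2,3T/4)$ --- which is precisely the scheme the paper carries out in detail for the three-dimensional analogue \eqref{CI2}. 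One remark: your Step~1 necessarily yields the local observation term in $\varphi$ (as in \eqref{CI9}, and as used later in the bilinear form $a_1$ of Proposition~\ref{prop22}), so the $\psi$ appearing in the local term of \eqref{CI4} as printed is a typo that your proof in fact corrects.
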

The proof follows from the Carleman estimate stated in Proposition~\ref{Carleman2}.

\begin{remark} Let us define
$$
L^*_H :=-\partial_t-{\cal P}_L\circ \Delta,
$$
where ${\cal P}_L:L^2(\Omega)\to H$ is the Leray projector (recall that the space $H$ is defined in (\ref{H})). If, in addition to the assumptions stated in Proposition~\ref{Carleman2}, one assumes that $\partial_tg_0,\,\partial_tg_1 \in L^2(Q)$, then,
using the classical regularization effect for the Stokes equation and for the heat equation, one can deduce from \eqref{CI4} the following Carleman inequality
\begin{equation}\label{CI5}
\begin{array}{l}
\| e^{-s\beta^*}(\gamma^*)^{-1/2}\varphi\|^2_{L^2(0,T;H^2(\Omega))}+\|e^{-s\beta^*}(\gamma^*)^{-1} \psi\|^2_{L^2(0,T;H^2(\Omega))}+\|\varphi(0,\cdot)\|^2_{L^2(\Omega)}+\|\psi(0,\cdot)\|^2_{L^2(\Omega)}
\\ \noalign{\medskip}\displaystyle
\hskip2cm\leq C(\|L_H^*(e^{-s\widehat\beta} \widehat\gamma^{-1}g_0)\|^2_{L^2(Q)}+
\|M^*(e^{-s\widehat\beta} \widehat\gamma^{-1}g_1)\|^2_{L^2(Q)}+  \|e^{-s\beta}\gamma^{15/2}\psi\|^2_{L^2(Q_{\cal O})} ).
\end{array}
\end{equation}
\end{remark}

\vskip0.7cm Now, we are ready to
solve the null controllability problem for the linear system
(\ref{linear}). For simplicity, we introduce the following
weight functions:
\begin{equation}\label{velib}
\varsigma_0(t):=e^{s\beta^*(t)}(\gamma^*(t))^{1/2},\quad
\varsigma_1(t):=e^{s\beta^*(t)}\gamma^{*}(t),\, \varsigma_2(t,x):=e^{s\beta(t,x)}\gamma(t,x)^{-15/2},\, \varsigma_3(t):=e^{s\widehat\beta(t)}\widehat\gamma(t).
\end{equation}

The null controllability of system (\ref{linear}) will be established in some weighted spaces which we present now :
\begin{equation}\label{F1}
F_1:=\{(y,p,u,\omega)
\in F_0 : \varsigma_0(Ly+\nabla p-P_1\omega-\mathds{1}_{\cal O}u)\in
L^2(Q),\,\varsigma_1(M\omega-\nabla\times y)\in L^2(Q),\varsigma_2u\in L^2(Q)\}
\end{equation}
where
$$
\displaystyle F_0=\{(y,p,u,\omega): (\varsigma_0)^{3/4}
y\in L^2(0,T;H^2(\Omega))\cap L^{\infty}(0,T;V),\,(\varsigma_1)^{3/4}\omega\in L^2(0,T;H^2(\Omega)\cap H^1_0(\Omega))\}.
$$
As in the three-dimensional case, these spaces are Banach spaces for the corresponding natural norms.

Then, we have the following result:
\begin{proposition}\label{prop22}
   Let $y_0\in V$, $\omega_0\in H^1_0(\Omega)$,
$\varsigma_0f_0\in L^2(Q)$ and $\varsigma_1f_1\in L^2(Q)$. Then, there exists a control $u\in L^2(Q)$ such that,
if $(y,\omega)$ is (together with some $p$) the associated solution
to (\ref{linear}), one has $(y,p,u,\omega)\in F_1$. In particular, $y(T, \cdot)=\omega(T, \cdot)=0$ in
$\Omega$.
\end{proposition}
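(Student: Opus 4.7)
My plan is to follow the argument of Proposition~\ref{prop2} line by line, replacing the three-dimensional Carleman estimate (\ref{CI3}) by the two-dimensional counterpart (\ref{CI5}) and the weights $\rho_i$ by the $\varsigma_i$ defined in (\ref{velib}). The essential role reversal is that, since the control $u$ now acts on the velocity equation, the local observation term in the Carleman estimate is on $\varphi$ rather than on $\psi$; the Lax--Milgram backbone is otherwise identical.

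Concretely, on the space
$$
P_0 := \{(\varphi,\pi,\psi)\in C^3(\overline Q):\nabla\cdot\varphi=0\hbox{ in }Q,\ \varphi=\psi=0\hbox{ on }\Sigma,\hbox{ with suitable compatibility on }\Sigma\}
$$
I would introduce the bilinear form
$$
\begin{array}{l}\displaystyle
a_0\bigl((\widehat\varphi,\widehat\pi,\widehat\psi),(\varphi,\pi,\psi)\bigr) := \int_Q L_H^*\bigl(\varsigma_3^{-1}(L_H^*\widehat\varphi-{\cal P}_L P_1\widehat\psi)\bigr)\cdot L_H^*\bigl(\varsigma_3^{-1}(L_H^*\varphi-{\cal P}_L P_1\psi)\bigr)
\\ \noalign{\medskip}\displaystyle
\qquad + \int_Q M^*\bigl(\varsigma_3^{-1}(M^*\widehat\psi-\nabla\times\widehat\varphi)\bigr)\cdot M^*\bigl(\varsigma_3^{-1}(M^*\psi-\nabla\times\varphi)\bigr) + \int_{Q_{\cal O}}\varsigma_2^{-2}\,\widehat\varphi\cdot\varphi,
\end{array}
$$
whose coercivity on $P_0$ is precisely the content of (\ref{CI5}) applied with $g_0=L_H^*\widehat\varphi-{\cal P}_L P_1\widehat\psi$ and $g_1=M^*\widehat\psi-\nabla\times\widehat\varphi$. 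Setting
$$
b_0(\varphi,\pi,\psi):=\int_Qf_0\cdot\varphi+\int_Qf_1\cdot\psi+\int_\Omega y_0\cdot\varphi(0,\cdot)+\int_\Omega\omega_0\cdot\psi(0,\cdot),
$$
Cauchy--Schwarz together with the assumptions $\varsigma_0f_0,\,\varsigma_1f_1\in L^2(Q)$ and the weighted $L^2(0,T;H^2(\Omega))$-bounds on $\varphi,\psi$ from (\ref{CI5}) (used as well to dominate $\|\varphi(0,\cdot)\|_{L^2(\Omega)}$ and $\|\psi(0,\cdot)\|_{L^2(\Omega)}$) show that $b_0$ is continuous on the completion $(P,a_0)$ of $P_0$.

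Lax--Milgram then produces a unique $(\widehat\varphi,\widehat\pi,\widehat\psi)\in P$ with $a_0(\widehat\cdot,\cdot)=b_0(\cdot)$, and one defines $\widehat y$, $\widehat\omega$ and $\widehat u$ by reading off the ``gradient'' of $a_0$ at this minimizer, exactly as in the three-dimensional case but with $\rho_i$ replaced by $\varsigma_i$ (so in particular $\widehat u:=-\varsigma_2^{-2}\widehat\varphi|_{Q_{\cal O}}$). Testing the adjoint system (\ref{adjoint2}) against $(\widehat y,\widehat\omega)$ and invoking the Lax--Milgram identity identifies $(\widehat y,\widehat\omega)$ as the transposition solution of (\ref{linear}) with $(u,v)=(\widehat u,0)$, and the integrability of $\varsigma_0\widehat y$ and $\varsigma_1\widehat\omega$ in $L^2(Q)$ forces $y(T,\cdot)=\omega(T,\cdot)=0$.

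The delicate step will be the final regularity upgrade, namely promoting this transposition solution to an element of the weighted space $F_1$ defined in (\ref{F1}). Since the weights in (\ref{velib}) degenerate only at $t=T$ (thanks to $\widetilde\ell\equiv\ell(T/2)$ on $[0,T/2]$) and since $y_0\in V$, $\omega_0\in H^1_0(\Omega)$, one would multiply the equations of $\widehat y$ and $\widehat\omega$ by successively growing powers of the weights and apply the standard Stokes and heat regularity theory, in the spirit of Proposition~4.3 of \cite{CG} already invoked by the authors in the three-dimensional case. Tracking the precise exponents of $\gamma$ and $\widehat\gamma$ so that the singularity of the control weight $\varsigma_2$ near $t=T$ is absorbed at each iteration is the most tedious bookkeeping, but the conclusion $(\widehat y,\widehat p,\widehat u,\widehat\omega)\in F_1$ is expected to follow by the same bootstrap as in dimension three.
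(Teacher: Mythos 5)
Your overall architecture is the same as the paper's: the same space of test triplets, the same bilinear form $a_1$ built from $L_H^*[\varsigma_3^{-1}(L^*\varphi+\nabla\pi-P_1\psi)]$ and $M^*[\varsigma_3^{-1}(M^*\psi-\nabla\times\varphi)]$ plus the local term on $\varphi$, coercivity from (\ref{CI5}), continuity of the linear form from the weighted $H^2$ bounds, and Lax--Milgram. (You are also right that the observation and the control act on $\varphi$; the occurrences of $\widehat\psi$ in (\ref{Windoff3}) and of $\psi$ in the local terms of (\ref{CI4})--(\ref{CI5}) are evidently slips in the text, consistent with Proposition \ref{Carleman2} and with the local term $\int_{Q_{\cal O}}\varsigma_2^{-2}\widehat\varphi\cdot\varphi$ in $a_1$.)

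There is, however, a genuine gap in your duality step. Because $a_1$ contains the \emph{second-order} operators $L_H^*[\varsigma_3^{-1}\,\cdot\,]$ and $M^*[\varsigma_3^{-1}\,\cdot\,]$ (unlike the three-dimensional form, which only multiplies by $\rho_i^{-2}$), the quantities $\widehat y:=L_H^*[\varsigma_3^{-1}(L^*\widehat\varphi+\nabla\widehat\pi-P_1\widehat\psi)]$ and $\widehat\omega:=M^*[\varsigma_3^{-1}(M^*\widehat\psi-\nabla\times\widehat\varphi)]$ are \emph{not} the transposition solution of (\ref{linear}); the Lax--Milgram identity, tested against the auxiliary adjoint problem for $(\Phi_2,\Phi_3)$, instead yields $\varsigma_3^{-1}L_H\widehat y=y_w$ and $\varsigma_3^{-1}M\widehat\omega=\omega_w$ with $\widehat y|_{t=0}=\widehat\omega|_{t=0}=0$, where $(y_w,p_w,\omega_w)$ is the weak solution with $u=\widehat u$. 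Correspondingly, what Lax--Milgram gives is only $\|\widehat y\|_{L^2(Q)}+\|\widehat\omega\|_{L^2(Q)}+\|\varsigma_2\widehat u\|_{L^2(Q_{\cal O})}<\infty$, \emph{unweighted} on $\widehat y,\widehat\omega$; your claimed bounds on $\varsigma_0\widehat y$ and $\varsigma_1\widehat\omega$ are not available, and even if they were they would concern $\widehat y$, not the state $y_w$, so they cannot directly force $y_w(T,\cdot)=\omega_w(T,\cdot)=0$. The null condition and the membership in $F_1$ must come from the weighted regularity of $(y_w,\omega_w)$ itself, which the paper obtains by a two-step transposition/energy bootstrap with $\theta_2(t)=(T-t)^{68}\varsigma_3(t)$ and $\theta_3(t)=(T-t)^{69}\varsigma_3(t)$; the essential ingredient there is precisely the forward characterization $y_w=\varsigma_3^{-1}L_H\widehat y$, $\omega_w=\varsigma_3^{-1}M\widehat\omega$, used to convert the commutator terms $\int_Q(\theta_j)'y_w\cdot\varphi$ and $\int_Q(\theta_j)'\omega_w\psi$ into integrals against $\widehat y,\widehat\omega\in L^2(Q)$. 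You correctly flag the regularity upgrade as the delicate step, but by skipping the correct identification you remove the very tool that makes that bootstrap work.
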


\begin{proof}
Let us introduce the space
$$
\begin{array}{l}
B_0=\{(\varphi,\pi,\psi)\in C^{\infty}(\overline Q):\nabla\cdot\varphi=0 \hbox{ in } Q,\,\varphi=\psi=0 \hbox{ on } \Sigma,
\\ \noalign{\medskip}
\phantom{P_1\,}
L^*\varphi+\nabla\pi-P_1 \psi=0\hbox{ on } \Sigma,\,
\nabla\cdot(L^*\varphi+\nabla\pi-P_1 \psi)=0\hbox{ in } Q
\\ \noalign{\medskip}
\phantom{P_1\,} (L^*\varphi+\nabla\pi-P_1 \psi)(0,\cdot)=0\hbox{ in } \Omega,\,
M^*\psi-\nabla \times \varphi=0\hbox{ on } \Sigma,\,
\\ \noalign{\medskip}
\phantom{P_1\,} (M^*\psi-\nabla \times \varphi)(0,\cdot)=0\hbox{ in } \Omega
\}
\end{array}
$$
and consider the bilinear form
\begin{equation}\label{a}
\begin{array}{l}\displaystyle
a_1((\widehat \varphi,\widehat \pi,\widehat\psi),(\varphi,\pi,\psi)):=\int_Q M^*[(\varsigma_3)^{-1}(M^*\widehat\psi-\nabla\times\widehat\varphi)]\cdot M^*[(\varsigma_3)^{-1}(M^*\psi-\nabla\times\varphi)]
\\ \noalign{\medskip}\displaystyle
+\int_{Q_{\cal O}}(\varsigma_2)^{-2}\widehat\varphi\cdot\varphi+\int_QL_H^*[(\varsigma_3)^{-1}(L^*\widehat\varphi+\nabla\widehat\pi-P_1\widehat \psi)] \cdot L_H^*[(\varsigma_3)^{-1}(L^*\varphi+\nabla\pi-P_1 \psi)].
\end{array}
\end{equation}

From the Carleman inequality (\ref{CI5}), this bilinear form is an inner product in $B_0$. We consider the Hilbert space resulting of the completion of $B_0$ with $a_1(\cdot,\cdot)$ and we call it $\widetilde{B_0}$.

We introduce now the linear form $b_1:\widetilde{B_0}\rightarrow \mathbb{R}$:
$$
b_1(\varphi,\pi,\psi):= \int_Q f_0\cdot\varphi+\int_Q f_1\cdot\psi\,dx\,dt+\int_{\Omega}\varphi(0,\cdot)\cdot y_0
+\int_{\Omega}\psi(0,\cdot)\cdot\omega_0.
$$
Then, in virtue of the Carleman inequality (\ref{CI5}) this linear form is continuous. Consequently, from the Lax-Milgram's Lemma there exists a unique solution $(\widehat \varphi,\widehat \pi,\widehat\psi)\in \widetilde{B_0}$ of
\begin{equation}\label{LaxM}
a_1((\widehat \varphi,\widehat \pi,\widehat\psi),(\varphi,\pi,\psi))=b_1(\varphi,\pi,\psi)   \quad\forall (\varphi,\pi,\psi)\in\widetilde{B_0}.
\end{equation}

Let us now define the following quantities :
\begin{equation}\label{Windoff}
\widehat y:=L^*_H[(\varsigma_3)^{-1}(L^*\widehat\varphi+\nabla\widehat\pi-P_1\widehat \psi)],
\end{equation}
\begin{equation}\label{Windoff2}
\widehat\omega:=M^*[(\varsigma_3)^{-1}(M^*\widehat\psi-\nabla\times\widehat\varphi)]
\end{equation}
and
\begin{equation}\label{Windoff3}
\widehat u:=-(\varsigma_2)^{-2}\widehat\psi.
\end{equation}
Then, from (\ref{a}) and (\ref{LaxM}), we readily have
\begin{equation}\label{ikea}
\|\widehat y\|_{L^2(Q)}+\|\widehat\omega\|_{L^2(Q)}+\|\varsigma_2\widehat u\|_{L^2(Q_{\cal O})}\lesssim \|\varsigma_0f_0\|_{L^2(Q)}+
\|\varsigma_1f_1\|_{L^2(Q)}+\|y_0\|_{L^2(\Omega)}+\|\omega_0\|_{L^2(\Omega)}.
\end{equation}

We consider now the weak solution $(y_w,p_w,\omega_w)$ of system (\ref{linear}) with $u:=\widehat u$. We will show that
\begin{equation}\label{Bernard}
\left\{
\begin{array}{llll}
     (\varsigma_3)^{-1} M \widehat\omega &=&\omega_w\qquad  &\text{ in } Q,  \\
     \widehat \omega&=&0 \qquad &\text{ in } \Sigma,\\
     \widehat \omega_{|t=0}&=&0\qquad &\text{ in } \Omega,
 \end{array}
\right.
\end{equation}
and
\begin{equation}\label{Arnauld}
\left\{
\begin{array}{llll}
     (\varsigma_3)^{-1} L_H \widehat y&=&y_w, \ \nabla\cdot \widehat y=0\qquad  &\text{ in } Q,  \\
     \widehat y&=&0 \qquad &\text{ in } \Sigma,\\
     \widehat y_{|t=0}&=&0\quad &\text{ in } \Omega.
 \end{array}
\right.
\end{equation}
From \eqref{LaxM} and \eqref{Windoff}-\eqref{Windoff3}, we find
\begin{eqnarray*}
\int_Q \widehat \omega M^*[(\varsigma_3)^{-1}(M^*\psi-\nabla\times\varphi)] -\int_{Q_{\cal O}}\widehat u\cdot\varphi+\int_Q \widehat y \cdot L_H^*[(\varsigma_3)^{-1}(L^*\varphi+\nabla\pi-P_1 \psi)]\\
=\int_Q (L y_w+\nabla p_w-P_1\omega_w -\mathds{1}_{\cal O} \widehat u )\cdot \varphi +\int_{Q}(M \omega_w - \nabla \times y_w) \psi +
\int_\Omega \varphi(0,\cdot)\cdot y_0+\int_\Omega \psi(0,\cdot) \omega_0.
\end{eqnarray*}
Integrating by parts, one consequently gets
\begin{eqnarray*}
\int_Q \widehat \omega M^*[(\varsigma_3)^{-1}(M^*\psi-\nabla\times\varphi)] -\int_{Q_{\cal O}}\widehat u\cdot\varphi+\int_Q \widehat y \cdot L_H^*[(\varsigma_3)^{-1}(L^*\varphi+\nabla\pi-P_1 \psi)]\nonumber\\
=\int_Q y_w\cdot(L^* \varphi +\nabla \pi-P_1 \psi) +\int_{Q}\omega_w(M^* \psi - \nabla \times \varphi).
\end{eqnarray*}
Therefore, $(\widehat y, \widehat \omega)$ satisfies
$$
\int_Q \widehat \omega g_2+\int_Q \widehat y\cdot g_3=\int_Q\omega_w \Phi_2+\int_Q y_w\cdot \Phi_3
$$
for all $g_2\in L^2(Q)$ and all $g_3\in L^2(Q)$, where $(\Phi_2,\Phi_3)$ is the solution of
\begin{equation}
\left\{
\begin{array}{llll}
     M^*[(\varsigma_3)^{-1} \Phi_2]&=&g_2 \qquad  &\text{ in } Q,  \\
    L_H^*[(\varsigma_3)^{-1} \Phi_3]&=&g_3 \qquad &\text{ in } Q,\\
 \nabla \cdot \Phi_3 &=& 0\qquad &\text{ in } Q,\\
     \Phi_2 &=  0,  \qquad  \Phi_3&=0  \qquad  &\text{ on } \Sigma, \\
     ((\varsigma_3)^{-1}\Phi_2)(T,\cdot) &= 0,  \qquad ((\varsigma_3)^{-1}\Phi_3)(T,\cdot)&=0  \qquad&\text{ in } \Omega
\end{array}
\right.
\end{equation}
This weak formulation means exactly that $(\widehat y, \widehat \omega)$  satisfies \eqref{Bernard}-\eqref{Arnauld}.

\vskip0.5cm Let us prove now that
\begin{equation}\label{atele}
(\varsigma_0)^{3/4}y_w\in L^2(H^2)\cap L^{\infty}(H^1),\,(\varsigma_1)^{3/4}\omega_w\in L^2(H^2)\cap L^{\infty}(H^1).
\end{equation}

$\bullet$ Let us first prove that, up to some weight functions, $y_w$ and $\omega_w$ are in $L^2(Q)$. Indeed, let us define
$$
(y^*,p^*,\omega^*):=\theta_2(t)(y_w,p_w,\omega_w),
$$
where
$$
\theta_2(t):=(T-t)^{68}\varsigma_3(t).
$$
Then, $(y^*,p^*,\omega^*)$ satisfies
\begin{equation}\label{y*}
\left\{\begin{array}{ll}
Ly^*+\nabla p^*=P_1\omega^*+\theta_2(\mathds{1}_{\cal O}\widehat u+f_0)+(\theta_2)'y_w,\quad \nabla\cdot y^*=0&\hbox{in }Q,
\\ \noalign{\medskip}
M\omega^*=\nabla\times y^*+\theta_2f_1+(\theta_2)'\omega_w&\hbox{in }Q,
\\ \noalign{\medskip}
y^*=0,\quad\omega^*=0&\hbox{on }\Sigma,
\\ \noalign{\medskip}
y^*(0, \cdot)=\theta_2(0)y_0,\quad \omega^*(0, \cdot)=\theta_2(0)\omega_0&\hbox{in }\Omega.
\end{array}\right.
\end{equation}
We use now that $(y^*,p^*,\omega^*)$ is also the solution by transposition of (\ref{y*}) :
\begin{equation}\label{plusdatele}
\begin{array}{l}\displaystyle
\int_Qy^*\cdot h_0+\int_Q\omega^*\cdot h_1
=\int_Q\theta_2(f_0+\mathds{1}_{\cal O}\widehat u)\cdot\varphi
+\int_Q\theta_2f_1\psi+\int_Q(\theta_2)'y_w\cdot\varphi+\int_Q(\theta_2)'\omega_w\psi
\\ \noalign{\medskip}\displaystyle
\phantom{\int_Qy^*\cdot h_0+\int_Q\omega^*\cdot h_1}+\int_{\Omega}\theta_2(0)y_0\cdot\varphi(0,\cdot)+\int_{\Omega}\theta_2(0)\omega_0\psi(0,\cdot)
\end{array}
\end{equation}
for all $h_0,\,h_1\in L^2(Q)$, where $(\varphi,\pi,\psi)$ is the solution of
\begin{equation}\label{adjoint2}
\left\{
\begin{array}{llll}
     L^* \varphi +\nabla \pi&=&P_1 \psi+h_0 \qquad  &\text{ in } Q,  \\
     M^* \psi&=& \nabla \times \varphi +h_1 \qquad &\text{ in } Q,\\
 \nabla \cdot \varphi &=& 0\qquad &\text{ in } Q,\\
     \varphi &=& 0 \qquad  &\text{ on } \Sigma, \\
     \psi&=&0  \qquad &\text{ on } \Sigma,\\
     \varphi(T,\cdot) &=& 0 \qquad  &\text{ in } \Omega, \\
     \psi(T,\cdot)&=&0  \qquad &\text{ in } \Omega.
\end{array}
\right.
\end{equation}
For this system, we have
\begin{equation}\label{kodak}
\|(\varphi,\psi)\|_{X_2}\lesssim \|h_0\|_{L^2(Q)}+\|h_1\|_{L^2(Q)}.
\end{equation}
where we have used the space $X_2:=L^2(0,T;H^2(\Omega))\cap L^{\infty}(0,T;H^1(\Omega))$ (endowed with its natural norm).

Observe that, from the definition of $\theta_2$ and $\varsigma_j\,(0\leq j\leq 3)$ (see (\ref{velib})), we have
$$
\begin{array}{l}\displaystyle
\left|
\int_Q\theta_2(f_0+\mathds{1}_{\cal O}\widehat u)\cdot\varphi
+\int_Q\theta_2f_1\psi+\int_{\Omega}\theta_2(0)y_0\cdot\varphi(0,\cdot)+\int_{\Omega}\theta_2(0)\omega_0\psi(0,\cdot)
\right|
\\ \noalign{\medskip}\displaystyle
\lesssim (\|\varsigma_0f_0\|_{L^2(Q)}+\|\varsigma_1f_1\|_{L^2(Q)}+\|\varsigma_0\widehat u\|_{L^2(Q_{\cal O})}+\|(y_0,\omega_0\|_{L^2(\Omega)})\|(\varphi,\psi)\|_{X_2}.
\end{array}
$$
Finally, using (\ref{Bernard})-(\ref{Arnauld}), we find
$$
\begin{array}{l}\displaystyle
\int_Q(\theta_2)'y_w\cdot\varphi+\int_Q(\theta_2)'\omega_w\psi=\int_Q(\theta_2)'(\varsigma_3)^{-1}L_H\widehat y\cdot\varphi+\int_Q(\theta_2)'(\varsigma_3)^{-1}M\widehat\omega\psi
\\ \noalign{\medskip}\displaystyle
=\int_QL_H^*((\theta_2)'(\varsigma_3)^{-1}\varphi)\cdot\widehat y+\int_QM^*((\theta_2)'(\varsigma_3)^{-1}\psi)\widehat\omega-\int_{\Omega}((\theta_2)'(\varsigma_3)^{-1})(0)(y_0\cdot\varphi(0,\cdot)+\omega_0\psi(0,\cdot))
\end{array}
$$
Using (\ref{ikea}) and the fact that $\|(\theta_2)'(\varsigma_3)^{-1}\|_{W^{1,\infty}(0,T)}\lesssim 1$, we obtain
$$
\left|\int_Q(\theta_2)'y_w\cdot\varphi+\int_Q(\theta_2)'\omega_w\psi\right|\lesssim (\|\varsigma_0f_0\|_{L^2(Q)}+\|\varsigma_1f_1\|_{L^2(Q)}+\|(y_0,\omega_0\|_{L^2(\Omega)})\|(\varphi,\psi)\|_{X_2}.
$$
Coming back to (\ref{plusdatele}) and using (\ref{kodak}), we deduce that $(y^*,\omega^*)\in L^2(Q)$ and
\begin{equation}\label{parisfc}
\|(y^*,\omega^*)\|_{L^2(Q)}\lesssim \|\varsigma_0f_0\|_{L^2(Q)}+\|\varsigma_1f_1\|_{L^2(Q)}+\|(y_0,\omega_0\|_{L^2(\Omega)}.
\end{equation}

 $\bullet$ Let us finally prove (\ref{atele}). To do so, we define
$$
(\widetilde y,\widetilde p,\widetilde\omega):=\theta_3(t)(y_w,p_w,\omega_w),
$$
where
$$
\theta_3(t):=(T-t)^{69}\varsigma_3(t).
$$
Then, $(\widetilde y,\widetilde p,\widetilde\omega)$ satisfies
\begin{equation}\label{y*}
\left\{\begin{array}{ll}
L\widetilde y+\nabla \widetilde p=P_1\widetilde\omega+\theta_3(\mathds{1}_{\cal O}\widehat u+f_0)+(\theta_3)'y_w,\quad \nabla\cdot \widetilde y=0&\hbox{in }Q,
\\ \noalign{\medskip}
M\widetilde\omega=\nabla\times \widetilde y+\theta_3 f_1+(\theta_3)'\omega_w&\hbox{in }Q,
\\ \noalign{\medskip}
\widetilde y=0,\quad\widetilde\omega=0&\hbox{on }\Sigma,
\\ \noalign{\medskip}
\widetilde y(0,\cdot)=\theta_3(0)y_0,\quad \widetilde\omega(0,\cdot)=\theta_3(0)\omega_0&\hbox{in }\Omega.
\end{array}\right.
\end{equation}
Using that $|(\theta_3)'|\lesssim \theta_2$, (\ref{ikea}) and (\ref{parisfc}), we deduce that $(\widetilde y,\widetilde\omega)\in X_2$ and
$$
\|(\widetilde y,\widetilde\omega)\|_{X_2}\lesssim \|\varsigma_0f_0\|_{L^2(Q)}+\|\varsigma_1f_1\|_{L^2(Q)}+\|(y_0,\omega_0\|_{H^1(\Omega)}.
$$
This concludes the proof of Proposition \ref{prop22}.
\end{proof}

\subsection{Local controllability of the semilinear problem}

In this section we only prove Theorem \ref{Control3} since the proof of Theorem \ref{Control2} is analog and can be derived from what follows.

Our proof relies on the arguments presented in \cite{OlegN-S}. The result of null controllability for the linear system \eqref{linear} given by Proposition~\ref{prop2} will allow us to apply an inverse mapping theorem, which we present now :

\begin{theorem}\label{teo:invmap}
Let $D_1$ and $D_2$ be two Banach spaces and let $\mathcal{A}:D_1 \to D_2$ satisfy $\mathcal{A}\in C^1(D_1;D_2)$. Assume that $x_1\in D_1$, $\mathcal{A}(x_1)=x_2$ and that $\mathcal{A}'(x_1):D_1 \to D_2$ is surjective. Then, there exists $\delta >0$ such that, for every $x'\in D_2$ satisfying $\|x'-x_2\|_{D_2}< \delta$, there exists a solution of the equation
$$\mathcal{A}(x) = x',\quad x\in D_1.$$
\end{theorem}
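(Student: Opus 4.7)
The statement is the classical surjective mapping theorem (Lyusternik--Graves type result): when $\mathcal{A}'(x_1)$ is onto, the nonlinear equation $\mathcal{A}(x)=x'$ is solvable for every $x'$ near $x_2$. My plan is to convert it into a fixed-point problem and apply Banach's contraction mapping principle, the main ingredient being a bounded right inverse of $\mathcal{A}'(x_1)$ produced via the open mapping theorem.

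After translating, I may assume $x_1=0$ and $x_2=\mathcal{A}(0)=0$. Set $L:=\mathcal{A}'(0)\in\mathcal{L}(D_1,D_2)$. By hypothesis $L$ is surjective, so Banach's open mapping theorem gives a constant $K>0$ such that for every $y\in D_2$ there exists $z\in D_1$ with $Lz=y$ and $\|z\|_{D_1}\leq K\|y\|_{D_2}$. Picking one such $z$ for each $y$ defines a (not necessarily linear) map $R:D_2\to D_1$ with $LR=\mathrm{id}_{D_2}$ and $\|R(y)\|_{D_1}\leq K\|y\|_{D_2}$. Write the remainder $N(x):=\mathcal{A}(x)-Lx$; since $\mathcal{A}\in C^1(D_1;D_2)$ with $N'(0)=0$, for every $\varepsilon>0$ there exists $r>0$ such that $\|N'(x)\|\leq\varepsilon$ for $\|x\|_{D_1}\leq r$, hence $N$ is $\varepsilon$-Lipschitz on the ball $B_r:=\{\|x\|_{D_1}\leq r\}$ with $\|N(x)\|_{D_2}\leq \varepsilon\|x\|_{D_1}$ there.

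The equation $\mathcal{A}(x)=x'$ is equivalent to $Lx=x'-N(x)$, and any fixed point of
\begin{equation*}
\Phi(x):=R\bigl(x'-N(x)\bigr)
\end{equation*}
is a solution. I would fix $\varepsilon$ so small that $K\varepsilon\leq 1/2$, choose the corresponding $r$, and then pick $\delta:=r/(2K)$. For $\|x'\|_{D_2}\leq\delta$ and $x,\tilde x\in B_r$ I estimate
\begin{equation*}
\|\Phi(x)\|_{D_1}\leq K\bigl(\|x'\|_{D_2}+\|N(x)\|_{D_2}\bigr)\leq K\delta+K\varepsilon r\leq r,
\qquad \|\Phi(x)-\Phi(\tilde x)\|_{D_1}\leq K\varepsilon\|x-\tilde x\|_{D_1}\leq \tfrac12\|x-\tilde x\|_{D_1},
\end{equation*}
so $\Phi$ is a $1/2$-contraction of $B_r$ into itself. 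The Banach fixed-point theorem then produces $x\in B_r$ with $\Phi(x)=x$, i.e.\ $\mathcal{A}(x)=x'$, which is the claim after undoing the translation.

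The delicate step is the construction of $R$: because $\ker L$ may be nontrivial, $R$ cannot in general be taken linear, so I must carefully use only the pointwise bound $\|R(y)\|\leq K\|y\|$ in the contraction estimate (which works precisely because $R$ appears inside differences of the form $R(x'-N(x))-R(x'-N(\tilde x))$ only through the composition $\Phi$, whose Lipschitz constant involves $R$ applied to both sides separately, bounded by $K$ times the Lipschitz constant of $x\mapsto x'-N(x)$). Alternatively, one can work in the quotient $D_1/\ker L$ to make $L$ an isomorphism onto $D_2$, obtain a genuine bounded linear inverse there, lift it to a continuous linear $R:D_2\to D_1$, and then run the same contraction argument; this variant makes the estimates entirely transparent.
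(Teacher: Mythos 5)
The paper offers no proof of Theorem~\ref{teo:invmap}; it is quoted as a classical Lyusternik--Graves type surjection theorem (the relevant reference being the book of Alekseev, Tikhomirov and Fomin in the bibliography), so your attempt has to stand on its own. Your overall strategy is the right one, but the contraction step contains a genuine gap. The map $R$ you extract from the open mapping theorem is only a pointwise bounded selection of preimages: you know $\|R(y)\|_{D_1}\leq K\|y\|_{D_2}$, but nothing gives $\|R(y)-R(\tilde y)\|_{D_1}\leq K\|y-\tilde y\|_{D_2}$; a choice-function selection need not even be continuous. Consequently the estimate $\|\Phi(x)-\Phi(\tilde x)\|_{D_1}\leq K\varepsilon\|x-\tilde x\|_{D_1}$ is unjustified, and the parenthetical explanation you give (that the Lipschitz constant of $\Phi$ is $K$ times that of $x\mapsto x'-N(x)$) is exactly the assertion that $R$ is $K$-Lipschitz, i.e.\ the missing ingredient. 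Your proposed repair via the quotient $D_1/\ker L$ does not work either: lifting the bounded inverse of the induced isomorphism $D_1/\ker L\to D_2$ to a continuous linear $R:D_2\to D_1$ amounts to choosing a closed complement of $\ker L$ in $D_1$, and closed subspaces of general Banach spaces need not be complemented.

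The standard way to close the gap is to replace the one-shot fixed point by the Graves iteration, which only ever invokes the pointwise bound on $R$ applied to successive residuals. Keeping your normalization $x_1=0$, $x_2=0$, set $x_0=0$ and, given $x_n$, let $e_n:=x'-\mathcal{A}(x_n)$ and $x_{n+1}:=x_n+R(e_n)$. A direct computation gives $e_{n+1}=N(x_n)-N(x_{n+1})$, because $L$ cancels the linear part of the correction exactly; hence, as long as all iterates remain in $B_r$, $\|e_{n+1}\|_{D_2}\leq\varepsilon\|R(e_n)\|_{D_1}\leq K\varepsilon\|e_n\|_{D_2}\leq\frac{1}{2}\|e_n\|_{D_2}$. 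Your choice $\delta=r/(2K)$ keeps the iterates inside $B_r$, since $\|x_{n+1}\|_{D_1}\leq K\sum_{k\leq n}2^{-k}\|x'\|_{D_2}\leq 2K\delta=r$ (an induction that runs in parallel with the decay of $e_n$). The sequence $(x_n)$ is then Cauchy and its limit $x$ satisfies $\mathcal{A}(x)=x'$ by continuity, with no Lipschitz property of $R$ ever used. With this modification your argument becomes a complete proof of the stated theorem.
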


We apply this theorem for the spaces $D_1 = E_1$ (recall that $E_1$ is defined in $(\ref{E1})$), $D_2 = \rho_0L^2(Q) \times \rho_1 L^2(Q)\times V\times (H^2(\Omega)\cap H^1_0(\Omega))$
and the operator
$$
\mathcal{A}(y,p,u,\omega) = (Ly + (y\cdot \nabla)y +\nabla p -\nabla\times\omega ,
M\omega+(y\cdot\nabla)\overline\omega+(y\cdot\nabla)\omega-\nabla\times y- \mathds{1}_{\cal O}v , y(0,\cdot),\omega(0,\cdot)) 
$$
for $(y,p,u,\omega)\in D_1$.

In order to apply Theorem \ref{teo:invmap}, it remains to check that the operator $\mathcal{A}$ is of class $C^1(D_1;D_2)$. Indeed, notice that all the terms in $\mathcal{A}$ are linear, except for $(y\cdot \nabla)y$ and $(y\cdot\nabla)\omega$. We will prove that the bilinear operators
$$
((y^1,p^1,u^1,\omega^1),(y^2,p^2,u^2,\omega^2))\longmapsto((y^1\cdot \nabla)y^2,(y^1\cdot\nabla)\omega^2)
$$
 are continuous from $D_1\times D_1$ to $ \rho_0 L^2(Q)\times \rho_1L^2(Q)$.
Using the definition of $E_1$ and the fact that
$$
(\rho_0)^{1/2}\leq (\rho_1)^{1/2}\leq (\rho_3)^{3/4}\quad\hbox{and}\quad (\rho_0)^{1/2}\leq (\rho_1)^{1/2}\leq (\rho_0)^{3/4},
$$
we obtain (since $H^1(\Omega) \hookrightarrow L^6(\Omega)$)
\begin{equation*}
\begin{split}
&\|\rho_0(y^1\cdot \nabla)y^2\|_{L^2(Q)}+\|\rho_1(y^1\cdot \nabla)\omega^2\|_{L^2(Q)} \\
&\lesssim \|(\rho_3)^{3/4}y^1\|_{L^\infty(0,T;H^1(\Omega))} \left(\|(\rho_3)^{3/4}\nabla y^2\|_{L^2(0,T;H^1(\Omega))}+\|(\rho_0)^{3/4}\nabla \omega^2\|_{L^2(0,T;H^1(\Omega))}\right)\\
&\lesssim \|(y^1,p^1,u^1,\omega^1)\|_{D_1}\|(y^2,p^2,u^2,\omega^2)\|_{D_1}.
\end{split}
\end{equation*}
Moreover $\mathcal{A}'(0,0,0,0):D_1\to D_2$ is given by
$$\mathcal{A}'(0,0,0,0)(y,p,u, \omega) = (Ly  +\nabla p - \nabla\times \omega ,
M\omega+(y\cdot\nabla)\overline\omega-\nabla\times y- \mathds{1}_{\cal O}v , y(0,\cdot),\omega(0,\cdot)),\,\,\forall (y,p,u, \omega)\in D_1,$$
so this functional is surjective in view of the null controllability result for the linear system \eqref{linear} given by Proposition \ref{prop2}.

We are now able to apply Theorem \ref{teo:invmap} for $x_1=(0,0,0,0)$ and $x_2=(0,0)$. In particular, this gives  the existence of a positive number $\delta$ such that, if $\|(y_0, \omega_0)\|_{H^1(\Omega)\times H^2(\Omega)}\leq \delta$, then we can find a control $v$, such that the associated solution $(y,p,u, \omega)$ to \eqref{system} satisfies $y(T)=0$ and $\omega(T)=\overline\omega(T)$ in $\Omega$.

This concludes the proof of Theorem \ref{Control3}.

\appendix
\section{Standard estimates}

We first present some classical energy estimates for the heat equation and for the Stokes system.

\begin{lemma}\label{estimationM}
Let $w\in L^2(0,T;H^1(\Omega))$ be the solution of the system
\begin{equation}\label{Espanyol}
\left\{
\begin{array}{lllll}
     w_t -A w&=&h+h_0 \qquad &\text{ in } Q,& \\
     w &=& 0 \qquad  &\text{ on } \Sigma,&  \\
     w(0, \cdot)&=&0  \qquad &\text{ on } Q,&
\end{array}
\right.
\end{equation}
where $h,\,h_0\in L^2(0,T;H^{-1}(\Omega))$ and $A$ is either $\Delta$ or $\Delta+\nabla(\nabla \cdot)$.
\begin{enumerate}
\item[$(a)$] Let $h\in L^2(Q)$ and $h_0\equiv 0$. Then,
    $$
    w\in L^2(0,T;H^2(\Omega))\cap H^1(0,T;L^2(\Omega))
    $$
    and there exists some constant $C>0$ independent from $h$ such that
\begin{equation}\label{Chili}
\|w\|_{L^2(0,T;H^2(\Omega))}+\|w\|_{H^1(0,T;L^2(\Omega))} \leq C \|h\|_{L^2(Q)} .
\end{equation}

\item[$(b)$] Let $h \in L^2(0,T;H^2(\Omega))\cap H^1(0,T;L^2(\Omega))$ and $h_0\in L^2(0,T;H^2(\Omega)\cap H^1_0(\Omega))$.
Then, there exists a constant $C>0$ independent from $h$ and $h_0$ such that
$$
\|w\|_{L^2(0,T;H^4(\Omega))\cap H^1(0,T;H^2(\Omega))} \leq C (\|h \|_{L^2(0,T;H^2(\Omega))\cap H^1(0,T;L^2(\Omega))}+\|h_0\|_{L^2(0,T;H^2(\Omega))}).
$$
\end{enumerate}
\end{lemma}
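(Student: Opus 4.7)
Part $(a)$ is the classical $L^2$-maximal regularity estimate for the strongly elliptic operator $-A$ with homogeneous Dirichlet boundary condition. I would test the equation against $w_t$: integrating by parts in $\int_\Omega(-Aw)\cdot w_t$ and using $w|_\Sigma=0$, one has $\int_\Omega(-Aw)\cdot w_t=\tfrac12\frac{d}{dt}\mathcal{E}(w)$ with $\mathcal{E}(w):=\int_\Omega\bigl(|\nabla w|^2+(d-2)|\nabla\cdot w|^2\bigr)$ (the second term is absent when $A=\Delta$). Combining this with the contribution $\|w_t\|_{L^2(\Omega)}^2$ coming from the time derivative, integrating in $t$, using $w(0,\cdot)=0$, and absorbing the source term by Young's inequality yields $\|w_t\|_{L^2(Q)}+\|w\|_{L^\infty(0,T;H^1(\Omega))}\lesssim\|h\|_{L^2(Q)}$. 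The $L^2(H^2)$ bound then follows from $Aw=w_t-h\in L^2(Q)$, $w|_\Sigma=0$, and elliptic regularity for $A$ (which is strongly elliptic, the elasticity case being handled via a Korn-type identity together with the bound on $\|\nabla\cdot w\|$).

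For part $(b)$ I would bootstrap from part $(a)$ after splitting $w=w^h+w^{h_0}$, where $w^h$ and $w^{h_0}$ solve (\ref{Espanyol}) with right-hand side $(h,0)$ and $(0,h_0)$ respectively. The key observation for $w^{h_0}$ is that $h_0(t,\cdot)\in H^2\cap H^1_0=D(A)$ for a.e.\ $t$, so that applying $A$ to the equation for $w^{h_0}$ (equivalently, using the representation $w^{h_0}(t)=\int_0^t e^{(t-s)A}h_0(s)\,ds$ and the commutation of $A$ with $e^{\tau A}$ on the domain) shows that $Aw^{h_0}$ itself solves a problem of type (\ref{Espanyol}) with source $Ah_0\in L^2(Q)$ and zero initial data. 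Part $(a)$ then gives $Aw^{h_0}\in L^2(0,T;H^2(\Omega))\cap H^1(0,T;L^2(\Omega))$, and combining with $w^{h_0}|_\Sigma=0$ and elliptic regularity yields $w^{h_0}\in L^2(0,T;H^4(\Omega))$; the time regularity $\partial_tw^{h_0}\in L^2(0,T;H^2(\Omega))$ is then read off from the equation.

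For $w^h$ I would instead differentiate the equation in time: $z:=\partial_tw^h$ formally satisfies $z_t-Az=h_t\in L^2(Q)$ with $z|_\Sigma=0$ and $z(0,\cdot)=h(0,\cdot)$. Since $h\in L^2(0,T;H^2(\Omega))\cap H^1(0,T;L^2(\Omega))\hookrightarrow C([0,T];H^1(\Omega))$, the initial trace $h(0,\cdot)$ lies in $H^1(\Omega)$; one handles the nonzero initial layer by subtracting the solution of the homogeneous equation with initial datum $h(0,\cdot)$, which belongs to $L^2(0,T;H^2(\Omega))\cap H^1(0,T;L^2(\Omega))$ by standard analytic-semigroup estimates for the Dirichlet realization of $A$, and then applies part $(a)$ to the remainder with source $h_t\in L^2(Q)$ and zero initial data. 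This produces $\partial_tw^h\in L^2(0,T;H^2(\Omega))\cap H^1(0,T;L^2(\Omega))$, after which $Aw^h=\partial_tw^h-h\in L^2(0,T;H^2(\Omega))$ and elliptic regularity give $w^h\in L^2(0,T;H^4(\Omega))$. The main subtlety I anticipate is the boundary compatibility at $t=0$ in the $w^h$ argument: strictly speaking $h(0,\cdot)$ need not vanish on $\partial\Omega$, and the subtraction of the initial layer must be justified through the theory of real interpolation spaces for the Dirichlet realization of $A$; this is standard and only affects the implicit constant, so the linear estimate in $\|h\|_{L^2(H^2)\cap H^1(L^2)}+\|h_0\|_{L^2(H^2)}$ announced in the lemma is preserved.
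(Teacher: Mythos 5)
Your proof follows essentially the same route as the paper's, which is a three-line argument: split $w=w^1+w^2$ according to the two sources, observe that $w^1_t$ solves (\ref{Espanyol}) with right-hand side $h_t$ and that $Aw^2$ solves it with right-hand side $Ah_0$, and apply part $(a)$ (for which the paper simply cites the classical literature). The one substantive difference is that you notice the point the paper glosses over, namely that $w^1_t(0,\cdot)=h(0,\cdot)$ need not vanish; but your resolution of it is not quite right. The homogeneous solution with initial datum $h(0,\cdot)$ lies in $L^2(0,T;H^2(\Omega))\cap H^1(0,T;L^2(\Omega))$ only if $h(0,\cdot)\in H^1_0(\Omega)=(L^2(\Omega),D(A))_{1/2,2}$, so without that compatibility the conclusion of $(b)$ can genuinely fail rather than merely cost a larger constant: take $h(t,x)=\chi(x)$ with $\chi\in H^2(\Omega)$ and nonzero boundary trace, for which $w_t=e^{tA}\chi\notin L^2(0,T;H^2(\Omega))$. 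This is harmless for the paper because in every application of the lemma the source $h$ carries a Carleman weight that vanishes at the relevant time endpoint, so that $h(0,\cdot)=0$; but strictly speaking a compatibility hypothesis such as $h(0,\cdot)=0$ (which does appear in Lemma \ref{estimationL}$(c)$ but is omitted here) should be added to the statement, and your write-up would be correct with that hypothesis in place.
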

\begin{proof}
We only prove $(b)$ since the proof of $(a)$ is classical (see for instance \cite{Lady}).

For the proof of $(b)$, we write $w=w^1+w^2$ where $w^1$ (respectively $w^2$) is the solution of (\ref{Espanyol}) with $h$ (respectively $h_0$) as right-hand side. We observe that $w^1_{t}$ is the solution of (\ref{Espanyol}) with right-hand side $h_t$ and $A w^2$ is the solution of (\ref{Espanyol}) with right-hand side $A h_0$. Applying $(a)$, we get the desired result.
\end{proof}

\begin{lemma}\label{estimationL}
Let $u\in L^2(0,T;V)$ (together with some $p$) be the solution of the system
\begin{equation}\label{Barcelona}
\left\{
\begin{array}{lllll}
     u_t-\Delta u +\nabla p&=&h_V+h \qquad &\text{ in } Q,&  \\
 \nabla \cdot u &=& 0\qquad &\text{ in } Q,& \\
     u &=& 0 \qquad  &\text{ on } \Sigma,&  \\
     u(0, \cdot)&=&0  \qquad &\text{ on } Q,&
\end{array}
\right.
\end{equation}
where $h,\,h_V\in L^2(0,T;H^{-1}(\Omega))$.

\begin{enumerate}
\item[$(a)$] Let $h \in L^2(Q)$ and $h_V\equiv 0$. Then,
    $$
    u\in L^2(0,T;H^2(\Omega))\cap H^1(0,T;L^2(\Omega))
    $$
    and there exists some constant $C>0$ independent from $h$ such that
$$
\|u\|_{L^2(0,T;H^2(\Omega))}+\|u\|_{H^1(0,T;L^2(\Omega))} \leq C\|h\|_{L^2(Q)}.
$$

\item[$(b)$] Let $h_V \in L^2(0,T;V)$ and $h \in L^2(0,T;H^1(\Omega)) \cap H^1(0,T;H^{-1}(\Omega))$. Then,
    $$
    u\in L^2(0,T;H^3(\Omega))\cap H^1(0,T;H^1(\Omega))
    $$
    and there exists some constant $C>0$ independent from $(h_V,h)$ such that
$$
\|u\|_{L^2(0,T;H^3(\Omega))}+\|u\|_{H^1(0,T;H^1(\Omega))} \leq C \left( \|h_V\|_{L^2(0,T;V)}+\|h\|_{L^2(0,T;H^1(\Omega)) \cap H^1(0,T;H^{-1}(\Omega))} \right).
$$
\item[$(c)$] Let $h\in H^1(0,T;L^2(\Omega))\cap L^2(0,T;H^2(\Omega))$ and $h_{V}\in L^2(0,T;H^2(\Omega)\cap V)$ with $h(\cdot,0)=0$ in $\Omega$. Then,
$$
u\in H^1(0,T;H^2(\Omega))\cap L^2(0,T;H^4(\Omega))
$$
and there exists a constant $C>0$ independent of $h$ and $h_V$ such that
$$
\|u\|_{H^1(0,T;H^2(\Omega))\cap L^2(0,T;H^4(\Omega))} \leq C (\|h\|_{H^1(0,T;L^2(\Omega))\cap L^2(0,T;H^2(\Omega))} +
\|h_V\|_{L^2(0,T;H^2(\Omega))})
$$
\end{enumerate}
\end{lemma}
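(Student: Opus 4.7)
Part (a) is the standard maximal regularity estimate for the evolutionary Stokes system with vanishing initial data: the Stokes operator $A=-{\cal P}_L\Delta$ on $D(A)=H^2(\Omega)\cap V$ generates an analytic semigroup on $H$, and Cattabriga's elliptic estimate together with the classical energy argument yields the stated bound; I would simply quote this from, e.g., Temam's monograph.

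For (b), I would split $u=u_1+u_2$, where $u_1$ solves (\ref{Barcelona}) with right-hand side $h$ only and $u_2$ with right-hand side $h_V$ only, and then apply part (a) as a building block to each piece. For $u_2$: since $h_V\in V$ is already divergence-free, the projected equation reads $\partial_t u_2+Au_2=h_V$ with $h_V\in L^2(0,T;D(A^{1/2}))$; applying $A^{1/2}$ on both sides (or equivalently expanding in the eigenbasis of $A$), the function $A^{1/2}u_2$ solves the same Stokes system with source $A^{1/2}h_V\in L^2(Q)$, so (a) yields $A^{1/2}u_2\in L^2(H^2)\cap H^1(L^2)$, i.e. $u_2\in L^2(D(A^{3/2}))\cap H^1(V)\hookrightarrow L^2(H^3)\cap H^1(H^1)$. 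For $u_1$: I differentiate the equation in time. Since $h\in L^2(H^1)\cap H^1(H^{-1})$ embeds into $C([0,T];L^2(\Omega))$, the initial value $\partial_t u_1(0,\cdot)={\cal P}_L h(0,\cdot)$ is well-defined in $H$, and the weak Stokes theory applied with source $\partial_t h\in L^2(V')$ yields $\partial_t u_1\in L^2(0,T;V)$. Rewriting the equation for $u_1$ as the stationary Stokes system $-\Delta u_1+\nabla p_1=h-\partial_t u_1\in L^2(0,T;H^1(\Omega))$, Cattabriga's regularity lifts $u_1$ to $L^2(0,T;H^3(\Omega))$ and $p_1$ to $L^2(0,T;H^2(\Omega))$.

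Part (c) repeats the same pattern one notch higher. For $u_2$: now $h_V\in L^2(D(A))$, so applying $A$ rather than $A^{1/2}$ and invoking (a) on $Au_2$ with source $Ah_V\in L^2(Q)$ produces $Au_2\in L^2(H^2)\cap H^1(L^2)$; Cattabriga then gives $u_2\in L^2(H^4)\cap H^1(H^2)$. For $u_1$: the compatibility $h(\cdot,0)=0$ is essential here, because it forces $\partial_t u_1(0,\cdot)={\cal P}_L h(0,\cdot)=0$, so that part (a) can be applied directly to the time-differentiated equation with source $\partial_t h\in L^2(Q)$, producing $\partial_t u_1\in L^2(H^2)\cap H^1(L^2)$. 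The elliptic Stokes step $-\Delta u_1+\nabla p_1=h-\partial_t u_1\in L^2(0,T;H^2(\Omega))$ then promotes $u_1$ to $L^2(H^4)$, while $\partial_t u_1\in L^2(H^2)$ provides the $H^1(H^2)$ control on $u_1$.

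The main obstacle is more a matter of bookkeeping than of analysis: one must track which space each of $u$, $\partial_t u$, $p$, $\nabla p$ lives in at every step and check the compatibility of initial traces. In particular, the hypothesis $h(\cdot,0)=0$ in (c) is precisely what permits a second, loss-free application of part (a) to the time-differentiated problem, whereas in (b) the weaker regularity of $h$ forces one to remain at the level of the weak Stokes theory (source in $V'$) for the analogous step.
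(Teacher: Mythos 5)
Your argument is correct, and it starts from the same linear splitting of the source into its $h$ and $h_V$ contributions as the paper does; the difference lies in how the $h_V$ piece -- the only piece the paper actually proves in detail -- is treated. For the $h$ piece the paper simply cites Ladyzhenskaya's maximal-regularity results, and your time-differentiation argument (with the observation that $h(\cdot,0)=0$ is exactly what makes the initial trace of $u_t$ vanish in part $(c)$) is essentially the proof of those results, so nothing is lost there. For the $h_V$ piece the paper avoids fractional powers of the Stokes operator altogether: setting $A(u,p):=-\Delta u+\nabla p$, it multiplies the equation by $A(u_t,p_t)$ and integrates by parts, obtaining for $(b)$ the identity
$$
\int_\Omega|\nabla u_t|^2+\frac12\frac{d}{dt}\int_\Omega|A(u,p)|^2=\int_\Omega\nabla u_t\cdot\nabla h_V,
$$
which, since $A(u,p)|_{t=0}=0$, gives $u\in H^1(0,T;V)$ directly; a stationary Stokes (Cattabriga) step then lifts $u$ to $L^2(0,T;H^3(\Omega))$. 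Part $(c)$ is obtained by first applying $\Delta$ to the equation and testing again with $A(u_t,p_t)$. Your route instead applies $A^{1/2}$ (resp.\ $A$) to the projected equation and bootstraps from part $(a)$; this is conceptually cleaner and more systematic, but it leans on the identifications $D(A^{1/2})=V$, $D(A^{3/2})\hookrightarrow H^3(\Omega)$ and $D(A^2)\hookrightarrow H^4(\Omega)$, which are standard for smooth domains but are precisely what the paper's elementary energy computation sidesteps. The only point you should make explicit in a full write-up is the justification of the formal manipulations (commuting $A^{1/2}$ or $\partial_t$ with the equation), e.g.\ by spectral truncation or Galerkin approximation, which you already hint at; with that caveat the proposal is sound.
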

 \begin{proof}
 Let us first remark that $(a)$, $(b)$ and $(c)$ with $h_V\equiv 0$ are contained in  \cite[Theorem 6, pages 100-101]{Lady}.

We now prove $(b)$ with $h\equiv 0$. Without loss of generality, one may assume that $h_V\in C^{\infty}([0,T];V)$.
In order to simplify the notations, let us denote
$$
A(u,p):=-\Delta u+\nabla p.
$$
Let us multiply the equation in (\ref{Barcelona}) by $A(u_t,p_t)\in
L^2(Q)$, integrate in $\Omega$ and integrate by parts. This yields
$$
\begin{array}{l}\displaystyle
\int_{\Omega}|\nabla
u_t|^2+\frac{1}{2}\frac{d}{dt}\int_{\Omega}|Au|^2
=\int_{\Omega}\nabla u_t\cdot\nabla
h_V.
\end{array}
$$
Here, we have used that $h_V(t,\cdot)$ and $u_t(t,\cdot)$ are elements of $V$ for almost every $t\in (0,T)$. From this
identity, using Young's inequality and thanks to the fact that $(A(u,p))_{|t=0}\equiv 0$ in $\Omega$, we have
that $u\in H^1(0,T;V)$ and
\begin{equation}\label{H31}
\|u\|_{H^1(0,T;H^1(\Omega))}\leq C\|h_V\|_{L^2(0,T;H^1(\Omega))}.
\end{equation}
Now, regarding system (\ref{Barcelona}) as a stationary Stokes system
with right-hand side in $V$ (see, for instance, \cite[Proposition 2.2, page 33]{Temam}), one
deduces $u\in L^2(0,T;H^3(\Omega))$ and concludes the proof of $(b)$.

 We finally prove $(c)$ when $h\equiv 0$. Using that $\Delta u_t + \Delta A(u,p)= \Delta h_V$ in $Q$, we get
 $$
 0=-\int_{\Omega} (\Delta u_t + \Delta A(u,p)- \Delta h_V)A(u_t,p_t).
 $$
 Integrating by parts and noting that $A(u_t,p_t)(t, \cdot)\in V$ for almost every $t \in (0,T)$, we deduce
 $$
 \int_{\Omega}|A(u_t,p_t)|^2+\frac{1}{2}\frac{d}{dt}\int_{\Omega}|\nabla A(u,p)|^2=-\int_{\Omega}\Delta h_V\,A(u_t,p_t)
 $$
 From this, we directly obtain $(c)$.
\end{proof}


Next, we recall a useful lemma related to the Carleman weights.
\begin{lemma}\label{dominationH1}
There exists some positive constants $s_0$ and $C$ such that, for any $u \in L^2(0,T;H^1(\Omega))$ and any $s \geq s_0$,
$$\int_Q e^{-2s\alpha} |u|^2 \leq C \left(s^{-2} \int_Q e^{-2s \alpha} \xi^{-2} |\nabla u|^2+\int_{Q_0} e^{-2s\alpha}|u|^2\right).$$
\end{lemma}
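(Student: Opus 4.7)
My plan is to prove this classical weighted estimate by a single integration by parts in the spatial variable. First I would split
$$
\int_Q e^{-2s\alpha}|u|^2 = \int_{Q_0} e^{-2s\alpha}|u|^2 + \int_{Q\setminus Q_0} e^{-2s\alpha}|u|^2;
$$
the first piece is already admissible in the right-hand side, so only the second needs to be bounded. Since $|\nabla\eta|^2\geq c>0$ on $\overline{\Omega}\setminus\Omega_0$ by construction of $\eta$, this second piece is controlled by $c^{-1}\int_Q |\nabla\eta|^2 e^{-2s\alpha}|u|^2$, and the whole task reduces to estimating this global integral.

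The algebraic identity driving the argument is
$$
\nabla\bigl(e^{-2s\alpha}\xi^{-1}\bigr)\cdot\nabla\eta = \lambda\bigl(2s-\xi^{-1}\bigr)\, e^{-2s\alpha}|\nabla\eta|^2,
$$
which follows from $\nabla\alpha=-\lambda\xi\nabla\eta$ and $\nabla\xi=\lambda\xi\nabla\eta$. Since $\xi^{-1}$ is bounded above (as $\ell$ has a positive upper bound), for $s$ large enough one has $2s-\xi^{-1}\geq s$, so multiplying by $u^2$ and integrating in space yields
$$
\lambda s\int_Q |\nabla\eta|^2 e^{-2s\alpha} u^2 \leq \int_Q \nabla\bigl(e^{-2s\alpha}\xi^{-1}\bigr)\cdot\nabla\eta\, u^2.
$$
One integration by parts on the right produces an $L^2$ term $\int_Q e^{-2s\alpha}\xi^{-1}\Delta\eta\, u^2$, a cross term $\int_Q e^{-2s\alpha}\xi^{-1}u\,\nabla\eta\cdot\nabla u$, and a boundary integral on $\Sigma$ with factor $\nabla\eta\cdot\nu$. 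The latter has the good sign $\nabla\eta\cdot\nu\leq 0$ (since $\eta\geq 0$ vanishes on $\partial\Omega$ and is positive inside), so it may be discarded.

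For the cross term I would apply Young's inequality with the carefully balanced splitting
$$
2\xi^{-1}|\nabla\eta|\,|u|\,|\nabla u| \leq \tfrac{1}{2}\lambda s\, |\nabla\eta|^2 u^2 + \tfrac{2}{\lambda s}\xi^{-2}|\nabla u|^2,
$$
which allows absorbing half of the $\lambda s\int|\nabla\eta|^2 e^{-2s\alpha}u^2$ term into the left-hand side and produces exactly the sharp weight $s^{-2}\xi^{-2}$ on the gradient term. After absorption this gives
$$
\int_Q |\nabla\eta|^2 e^{-2s\alpha}u^2 \lesssim \frac{1}{s}\int_Q e^{-2s\alpha}u^2 + \frac{1}{s^2}\int_Q e^{-2s\alpha}\xi^{-2}|\nabla u|^2,
$$
and inserting this in the first step, splitting the first integral on the right once more into $\int_{Q_0}+\int_{Q\setminus Q_0}$, and taking $s$ large enough to absorb the $\int_{Q\setminus Q_0}$ contribution into the left yields the lemma.

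The only non-routine step is identifying the correct Young splitting: the weights must be tuned so that the $u^2$ factor picks up a full $\lambda s$ coefficient (for absorption into the left-hand side) while the $|\nabla u|^2$ factor ends up with the sharp weight $s^{-2}\xi^{-2}$ rather than the weaker $s^{-1}\xi^{-1}$ one would obtain from a naive balance. This in turn forces the initial choice of $e^{-2s\alpha}\xi^{-1}$ (rather than $e^{-2s\alpha}$) as the weight in the integration by parts.
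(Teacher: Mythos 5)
Your argument is correct: the identity $\nabla(e^{-2s\alpha}\xi^{-1})\cdot\nabla\eta=\lambda(2s-\xi^{-1})e^{-2s\alpha}|\nabla\eta|^2$ follows from $\nabla\alpha=-\lambda\xi\nabla\eta$ and $\nabla\xi=\lambda\xi\nabla\eta$, the boundary term indeed carries the factor $\partial_\nu\eta\le 0$, and the Young splitting with parameter $\lambda s/2$ delivers the weight $s^{-2}\xi^{-2}$ after division by $\lambda s$; the final absorptions for $s$ large are routine. The paper runs the same one-integration-by-parts computation from the other end: it substitutes $v=e^{-s\alpha}u$, expands $e^{-2s\alpha}\xi^{-2}|\nabla u|^2=\xi^{-2}|\nabla v+sv\nabla\alpha|^2$, integrates the cross term by parts (the boundary contribution there carries $\partial_\nu\alpha\ge 0$, which is the same sign condition as your $\partial_\nu\eta\le 0$ since $\nabla\alpha=-\lambda\xi\nabla\eta$), and reads off the lower bound $cs^2\bigl(\int_{Q\setminus Q_0}e^{-2s\alpha}u^2-\int_{Q_0}e^{-2s\alpha}u^2\bigr)$ directly from the dominant term $s^2\xi^{-2}|\nabla\alpha|^2v^2=\lambda^2s^2|\nabla\eta|^2v^2$. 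The two routes use identical structural inputs ($\nabla\alpha$ parallel to $\nabla\eta$, positivity of $|\nabla\eta|$ off $\Omega_0$, the boundary sign); yours avoids the change of unknown and trades the explicit expansion of the square for a Young inequality, while the paper's makes the $s^2$ gain appear as the literal square of the first-order weight. Either is acceptable; there is no gap.
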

{\bf Proof} (see also \cite[Lemma 3]{CG}){\bf.}
Let us set $v:=  e^{-s \alpha} u$ and $f:=\nabla u \in L^2(0,T; L^2(\Omega))$. Writing $u=e^{s \alpha}  v$, one has
$$ e^{-s \alpha}  f=\nabla v+s v \nabla \alpha$$
so that, after an integration by parts,  since $\nabla \xi=-\nabla \alpha$ and for some constant $C>0$,
\begin{eqnarray*} \int_Q e^{-2s \alpha} \xi^{-2} |f|^2&=&\int_Q \xi^{-2} |\nabla v|^2+s^2 \int_Q \xi^{-2}|\nabla \alpha|^2 v^2-s \int_Q \xi^{-2} (\Delta \alpha)  v^2-2s\int_Q \xi^{-3}|\nabla \alpha|^2 v^2 +\int_{\Sigma} \xi^{-2} (\partial_{\nu} \alpha) v^2\\
&\geq&  \lambda ^2 s^2 \int_Q (1-C (s\xi)^{-1}) |\nabla \eta|^2 v^2-C\lambda s^2 \int_Q (s \xi)^{-1}v^2
\end{eqnarray*}
using that $\partial_\nu \alpha \geq 0$ on $\Sigma$, $\nabla \alpha= - \lambda \nabla \eta \xi$ and $|\Delta \alpha| \lesssim (\lambda^2|\nabla \eta|^2\xi+ \lambda \xi)$.

Moreover, since $|\nabla \eta| >0 $ on the compact set $ \overline{\Omega} \backslash {\Omega_0}$, one additionally gets that, for some $c>0$,
$$
\lambda ^2 s^2 \int_Q (1-C (s\xi)^{-1}) |\nabla \eta|^2 v^2-C\lambda  s^2 \int_Q (s \xi)^{-1}v^2 \geq c  s^2 \left( \int_{Q\backslash Q_0}  v^2- \int_{ Q_0}  v^2\right)
$$
for a choice of $s$ such that $s \gtrsim T^8$. This concludes the proof.
\hspace*{\fill}$\Box$\medskip

Finally, we present some bilinear estimate used for the proof of Proposition \ref{Carleman3}.
\begin{lemma}\label{estimationL2H-1}
There exists some $C>0$ such that, for any $\overline{\omega}\in L^\infty \left(0,T; W^{1,3}(\Omega)\right)\cap H^1\left(0,T; L^{3}(\Omega)\right)$, we have
\begin{itemize}
\item[(a)] for all $u \in L^2(0,T;H^{-1}(\Omega))$,
$$
 \| \overline{\omega}u \|_{L^2(0,T;H^{-1}(\Omega))} \leq C  \| \overline{\omega} \|_{L^\infty \left(0,T; W^{1,3}(\Omega)\right)} \|u\|_{L^2(0,T;H^{-1}(\Omega))};
$$
\item[(b)] for all $u \in L^\infty(0,T;L^{2}(\Omega))$,
$$
\| \overline{\omega}_t u \|_{L^2(0,T;H^{-1}(\Omega))} \leq C  \| \overline{\omega} \|_{H^1 \left(0,T; L^3(\Omega)\right)} \|u\|_{L^\infty(0,T;L^{2}(\Omega))}.
$$
\end{itemize}
\end{lemma}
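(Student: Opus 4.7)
The plan is to prove both estimates by combining pointwise (in time) H\"older inequalities with Sobolev embeddings in dimension three, together with a duality argument for part (a). In each case one obtains an estimate at fixed time $t$ in the appropriate spatial norms, and then integrates in $t$, pairing the relevant time integrability of $\overline{\omega}$ (or $\overline{\omega}_t$) with that of $u$.

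Part (b) is the more direct. At fixed $t$, H\"older's inequality yields
$$\|\overline{\omega}_t(t)\,u(t)\|_{L^{6/5}(\Omega)}\le \|\overline{\omega}_t(t)\|_{L^3(\Omega)}\,\|u(t)\|_{L^2(\Omega)}.$$
The Sobolev embedding $H^1_0(\Omega)\hookrightarrow L^6(\Omega)$ valid in $d=3$ gives by duality $L^{6/5}(\Omega)\hookrightarrow H^{-1}(\Omega)$, hence $\|\overline{\omega}_t(t)\,u(t)\|_{H^{-1}}\lesssim \|\overline{\omega}_t(t)\|_{L^3}\|u(t)\|_{L^2}$. Squaring and integrating in $t$, taking $\|u\|_{L^\infty(0,T;L^2)}$ out and keeping $\overline{\omega}_t$ in $L^2(0,T;L^3)$, concludes.

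For part (a), I argue by duality in the spatial variable. For fixed $t$,
$$\|\overline{\omega}(t)\,u(t)\|_{H^{-1}(\Omega)} = \sup_{\|\varphi\|_{H^1_0}\le 1}\bigl|\langle u(t),\,\overline{\omega}(t)\varphi\rangle\bigr|\le \|u(t)\|_{H^{-1}(\Omega)}\,\|\overline{\omega}(t)\varphi\|_{H^1_0(\Omega)}.$$
It therefore suffices to establish the multiplier bound $\|\overline{\omega}(t)\varphi\|_{H^1_0}\lesssim \|\overline{\omega}(t)\|_{W^{1,3}}\|\varphi\|_{H^1_0}$ and then integrate in $t$. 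Splitting $\nabla(\overline{\omega}\varphi) = (\nabla\overline{\omega})\varphi + \overline{\omega}\nabla\varphi$, the terms $\|\overline{\omega}\varphi\|_{L^2}$ and $\|(\nabla\overline{\omega})\varphi\|_{L^2}$ are handled by H\"older, pairing $\|\overline{\omega}\|_{L^3}$ or $\|\nabla\overline{\omega}\|_{L^3}$ with $\|\varphi\|_{L^6}\lesssim \|\varphi\|_{H^1_0}$.

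The main obstacle is the third term, $\overline{\omega}\nabla\varphi$: bounding it in $L^2$ with $\nabla\varphi$ only in $L^2$ forces an $L^\infty$ control on $\overline{\omega}$. In $d=3$, the critical embedding $W^{1,3}(\Omega)\hookrightarrow L^p(\Omega)$ holds for every finite $p$ but fails at $p=\infty$, so strict $W^{1,3}$ regularity is borderline; the needed $L^\infty$ bound is recovered from the Morrey embedding $W^{1,3+\delta}(\Omega)\hookrightarrow L^\infty(\Omega)$, which is available in the setting of Proposition~\ref{Carleman3}.
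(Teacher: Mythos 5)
Your proof is correct and follows essentially the same route as the paper: duality reduces (a) to the multiplier bound $\varphi\mapsto\overline{\omega}\varphi$ on $H^1_0(\Omega)$, handled by the product rule, H\"older and $H^1(\Omega)\hookrightarrow L^6(\Omega)$, while (b) uses $L^{6/5}(\Omega)\hookrightarrow H^{-1}(\Omega)$ together with H\"older in the form $L^3\cdot L^2\subset L^{6/5}$. Your remark about the term $\overline{\omega}\nabla\varphi$ is well taken: the paper's own proof bounds it by $\|\overline{\omega}\|_{L^\infty(0,T;L^\infty(\Omega))}\|u\|_{L^2(0,T;H^1_0(\Omega))}$ without comment, a quantity not controlled by the $W^{1,3}$ norm appearing in the statement (since $W^{1,3}(\Omega)\hookrightarrow L^\infty(\Omega)$ fails for $d=3$), and this step is only legitimate because the hypothesis $\overline{\omega}\in L^\infty(0,T;W^{1,3+\delta}(\Omega))$, $\delta>0$, of Proposition~\ref{Carleman3} supplies the missing $L^\infty$ bound via Morrey --- exactly the fix you propose.
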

\begin{proof}
By duality, the first estimate reduces to prove that
\begin{equation} \label{pot}\forall u \in  L^2(0,T;H^{1}_0(\Omega)), \ \| \overline{\omega}u \|_{L^2(0,T;H^{1}_0(\Omega))} \lesssim  \| \overline{\omega} \|_{L^\infty \left(0,T; W^{1,3}(\Omega)\right)} \|u\|_{L^2(0,T;H^{1}_0(\Omega))}.
\end{equation}
Morover, one has
\begin{eqnarray*}\forall u \in  L^2(0,T;H^{1}_0(\Omega)), \ \| \overline{\omega}u \|_{L^2(0,T;H^{1}_0(\Omega))} &\lesssim&   \| \overline{\omega}\nabla u \|_{L^2(0,T;L^2(\Omega))}+ \|(\nabla \overline{\omega})u \|_{L^2(0,T;L^2(\Omega))}\\
 & \lesssim & \| \overline{\omega} \|_{L^\infty \left(0,T; L^\infty(\Omega)\right)}\|u\|_{L^2(0,T;H^{1}_0(\Omega))}\\
 &+& \| \nabla \overline{\omega} \|_{L^\infty \left(0,T; L^{3}(\Omega)\right)}\|u\|_{L^2(0,T;H^{1}_0(\Omega))}
\end{eqnarray*}
since $H^1(\Omega) \hookrightarrow L^6(\Omega)$. This concludes the proof of (a).

On the other hand, one has by duality  $L^{6/5}(\Omega) \hookrightarrow H^{-1}(\Omega)$ so that
$$\forall u \in L^\infty(0,T;L^{2}(\Omega)), \ \| \overline{\omega}_t u \|_{L^2(0,T;H^{-1}(\Omega))} \lesssim  \| \overline{\omega}_t u \|_{L^2 \left(0,T; L^{6/5}(\Omega)\right)} \lesssim \| \overline{\omega}_t \|_{L^2 \left(0,T; L^{3}(\Omega)\right)} \| u \|_{L^\infty \left(0,T; L^2(\Omega)\right)}   $$
using H\"{o}lder inequality. The proof of (b) is complete.
\end{proof}

\end{document}